\documentclass[11pt,a4paper]{amsart}

\usepackage{amssymb, amsthm, amsmath, amsfonts, mathrsfs, datetime, multirow}
\usepackage{fancyhdr}
\usepackage{indentfirst}
\usepackage{cmap}
\usepackage{caption}
\usepackage{tikz-cd} 
\usetikzlibrary{calc, 3d, patterns}
\tikzset{
	labl/.style={anchor=south, rotate=-90, inner sep=.6mm}
}
\usepackage{dsfont}
\usepackage{stmaryrd}
\usepackage{enumitem}

\usepackage{color}\definecolor{darkblue}{rgb}{0,0.1,.5}
\usepackage[colorlinks=true,linkcolor=darkblue, urlcolor=darkblue, citecolor=darkblue]{hyperref} 
\usepackage[left=1.5cm, right=1.5cm, top=3cm, bottom=3cm]{geometry}
\parindent=3mm
\definecolor{newab}{rgb}{.9,0,0}

\textwidth=15cm
\textheight=24cm
\calclayout

\makeatletter
\renewcommand{\@makecaption}[2]{%
	\vspace{\abovecaptionskip}%
	\sbox{\@tempboxa}{#1. #2}
	\ifdim \wd\@tempboxa >\hsize
	#1. #2\par
	\else
	\global\@minipagefalse
	\hbox to \hsize {\hfil #1. #2\hfil}%
	\fi
	\vspace{\belowcaptionskip}}
\makeatother 

\def\R{\mathbb{R}}
\def\C{\mathbb{C}}

\def\Z{\mathbb{Z}}

\def\sk{\mathcal K}
\def\sK{\mathcal K}

\def\vphi{\varphi}
\def\k{\Bbbk}
\def\m{\mathfrak m}
\def\n{\mathfrak n}

\def\Tor{\mathop{\mathrm{Tor}}\nolimits}
\def\Cotor{\mathop{\mathrm{Cotor}}\nolimits}
\def\id{\mathds{1}}
\DeclareMathOperator{\lcm}{lcm}
\DeclareMathOperator{\sign}{sign}

\def\zk{\mathcal {Z_K}}	
\def\cpk{(\C P^{\infty})^{\sk}}
\def\pt{\mathit{pt}}


\def\geq{\geqslant}
\def\le{\leqslant}

\newcommand{\hr}[2][]{\hyperref[#2]{#1~\ref{#2}}}

\newtheorem{theorem}{Theorem}[section]
\newtheorem*{theorem*}{Theorem}
\newtheorem{lemma}[theorem]{Lemma}
\newtheorem{problem}[theorem]{Problem}
\newtheorem{proposition}[theorem]{Proposition}

\theoremstyle{definition}
\newtheorem{construction}[theorem]{Construction}
\newtheorem{definition}[theorem]{Definition}
\newtheorem{example}[theorem]{Example}
\newtheorem{remark}[theorem]{Remark} 

\renewenvironment{proof}[1][{\itshape Proof}]{{\itshape #1. }}{\qed}

\numberwithin{equation}{section}

\title[Whitehead products and substitution of simplicial complexes]{Higher Whitehead products in moment-angle complexes and substitution of simplicial complexes}

\author{Semyon Abramyan}
\address{AG Laboratory, HSE, 6 Usacheva str., Moscow, Russia, 119048}
\email{semyon.abramyan@gmail.com}

\author{Taras Panov}
\address{Department of Mathematics and Mechanics, Lomonosov Moscow
State University, Leninskie gory, 119991 Moscow, Russia;\newline
Institute for Information Transmission Problems, Russian Academy of Sciences, Moscow;\newline
Institute of Theoretical and Experimental Physics, Moscow}
\email{tpanov@mech.math.msu.su}

\thanks{The first author was partially supported by the Russian Academic Excellence Project `5-100', by the Russian Foundation for Basic Research (grant no.~ 18-51-50005), and by the Simons Foundation.}
\thanks{The second author was partially supported by the Russian Foundation for Basic Research (grants no.~17-01-00671, 18-51-50005), and by the Simons Foundation.}

\begin{document}

\begin{abstract}
We study the question of realisability of iterated higher Whitehead products with a given form of nested brackets by simplicial complexes, using the notion of the moment-angle complex~$\zk$. Namely, we say that a simplicial complex $\sk$ realises an iterated higher Whitehead product $w$ if $w$ is a nontrivial element of $\pi_*(\zk)$.  The combinatorial approach to the question of realisability uses the operation of substitution of simplicial complexes: for any iterated higher Whitehead product~$w$ we describe a simplicial complex $\partial\Delta_w$ that realises~$w$. Furthermore, for a particular form of brackets inside~$w$, we prove that $\partial\Delta_w$ is the smallest complex that realises~$w$. We also give a combinatorial criterion for the nontriviality of the product~$w$. In the proof of nontriviality we use the Hurewicz image of~$w$ in the cellular chains of~$\zk$ and the description of the cohomology product of~$\zk$.
The second approach is algebraic: we use the coalgebraic versions of the Koszul and Taylor complex for the face coalgebra of~$\sK$ to describe the canonical cycles corresponding to iterated higher Whitehead products~$w$. This gives  another criterion for realisability of $w$.
\end{abstract}

\dedicatory{Dedicated to our Teacher Victor Matveevich Buchstaber on the occasion of his 75th birthday}

\maketitle

\tableofcontents


\section{Introduction}
Higher Whitehead products are important invariants of unstable homotopy type. They have been studied since the 1960s in the works of homotopy theorists such as Hardie~\cite{hard61}, Porter~\cite{port65} and Williams~\cite{will72}.

The appearance of moment-angle complexes and, more generally, polyhedral products in toric topology at the end of the 1990s brought a completely new perspective on higher homotopy invariants such as higher Whitehead products. The homotopy fibration of polyhedral products
\begin{equation}\label{zkfib}
  (D^2,S^1)^\sK\to(\mathbb C P^\infty)^\sK\to (\mathbb C P^\infty)^m
\end{equation}
was used as the universal model for studying iterated higher Whitehead products in~\cite{pa-ra08}. Here $(D^2,S^1)^\sK=\zk$ is the moment-angle complex, and $(\mathbb C P^\infty)^\sK$ is homotopy equivalent to the Davis--Januszkiewicz space~\cite{bu-pa00,bu-pa15}. The form of nested  brackets in an iterated higher Whitehead product is reflected in the combinatorics of the simplicial complex~$\sk$.

There are two classes of simplicial complexes $\sk$ for which the moment-angle complex is particularly nice. From the geometric point of view, it is interesting to consider complexes $\sk$ for which $\zk$ is a manifold. This happens, for example, when $\sk$ is a simplicial subdivision of sphere or the boundary of a polytope. The resulting moment-angle manifolds~$\zk$ often have remarkable geometric properties~\cite{pano13}. On the other hand, from the homotopy-theoretic point of view, it is important to identify the class of simplicial complexes $\sk$ for which the moment-angle complex $\zk$ is homotopy equivalent to a wedge of spheres. We denote this class by~$B_\Delta$.  The spheres in the wedge are usually expressed in terms of iterated higher Whitehead products of the canonical $2$-spheres in the polyhedral product~$(\mathbb C P^\infty)^\sK$. We denote by $W_\Delta$ the subclass in $B_\Delta$  consisting of those $\sk$ for which $\zk$ is  a wedge of iterated higher Whitehead products. The question of describing the class $W_\Delta$ was studied in~\cite{pa-ra08} and formulated explicitly in~\cite[Problem~8.4.5]{bu-pa15}. It follows from the results of~\cite{pa-ra08} and~\cite{g-p-t-w16} that $W_\Delta=B_\Delta$ if we restrict attention to \emph{flag} simplicial complexes only, and a flag complex $\sK$ belongs to~$W_\Delta$ if and only if its one-skeleton is a chordal graph. Furthermore, it is known that $W_\Delta$ contains directed $MF$-complexes~\cite{gr-th16}, shifted and totally fillable complexes~\cite{ir-ki1,ir-ki2}. On the other hand, it has been recently shown in~\cite{abra} that the class $W_\Delta$ is \emph{strictly} contained in~$B_\Delta$. There is also a related question of \emph{realisability} of an iterated higher Whitehead product $w$ with a given form of nested brackets: we say that a simplicial complex $\sk$ \textit{realises} an iterated higher Whitehead product $w$ if $w$ is a nontrivial element of $\pi_*(\zk)$ (see \hr[Definition]{wdelta}). For example, the boundary of simplex $\sK=\partial\Delta(1,\ldots,m)$ realises a single (non-iterated) higher Whitehead product $[\mu_1,\ldots,\mu_m]$, which maps $\zk=S^{2m-1}$ into the fat wedge $(\C P^\infty)^\sK$.

We suggest two approaches to the questions above. The first approach is combinatorial: using the operation of substitution of simplicial complexes (\hr[Section]{subst}), for any iterated higher Whitehead product~$w$ we describe a simplicial complex $\partial\Delta_w$ that realises~$w$ (\hr[Theorem]{realisation}). Furthermore, for a particular form of brackets inside~$w$, we prove in \hr[Theorem]{smallest_realisation}~(a) that $\partial\Delta_w$ is the smallest complex that realises~$w$. We also give a combinatorial criterion for the nontriviality of the product~$w$ (\hr[Theorem]{smallest_realisation}~(b)). In the proof of nontriviality we use the Hurewicz image of~$w$ in the cellular chains of~$\zk$ and the description of the cohomology product of~$\zk$ from~\cite{bu-pa00}. \hr[Theorems]{realisation},\hr{smallest_realisation} and further examples  not included in this paper lead us to conjecture that $\partial\Delta_w$ is the smallest complex realising~$w$,  for any iterated higher Whitehead product (see \hr[Problem]{proble}).

The second approach is algebraic: we use the coalgebraic versions of the Koszul complex and the Taylor resolution of the face coalgebra of~$\sK$ to describe the canonical cycles corresponding to iterated higher Whitehead products~$w$. This gives  another criterion for realisability of $w$ in \hr[Theorem]{tarep}.

\section{Preliminaries}
\label{prelimi}

A \textit{simplicial complex} $\sk$ on the set $[m] = \{1, 2, \ldots, m\}$ is a collection of subsets $I \subset [m]$ closed under taking any subsets. We refer to $I\in\sk$ as a \textit{simplex} or a \textit{face} of~$\sk$, and always assume that $\sk$ contains $\varnothing$ and all singletons $\{i\}$, $i=1,\ldots,m$. We do not distinguish between $\sk$ and its geometric realisation when referring to the homotopy or topological type of~$\sk$.  

We denote by $\Delta^{m-1}$ or $\Delta(1, \ldots, m)$ the full simplex on the set $[m]$. Similarly, denote by $\Delta(I)$ a simplex with the vertex set $I\subset[m]$ and denote its boundary  by $\partial\Delta(I)$. A \emph{missing face}, or a \emph{minimal non-face} of~$\sk$ is a subset $I\subset[m]$ such that $I\notin\sk$, but $\partial\Delta (I)\subset \sk$.

Assume we are given a set of $m$ pairs of based cell complexes
\[
	(\underline X, \underline A) = \{(X_1, A_1),\dots, (X_m, A_m)\}
\]
where $A_i\subset X_i$. For each simplex $I\in\sk$ we set
\[
	(\underline X, \underline A)^I = \{(x_1, \ldots, x_m)\in X_1\times\cdots\times X_m\;|\; x_j \in A_j \text{ for $j\notin I$}\}.
\]
The \textit{polyhedral product} of $(\underline X, \underline A)$ corresponding to $\sk$ is the following subset of $X_1\times\dots\times X_m$:
\[
	(\underline X, \underline A)^{\sk} = 
  \bigcup\limits_{I\in \sk} 
  (\underline X, \underline A)^I\qquad (\subset X_1\times\cdots\times X_m).
\]

In the case when $(X_i, A_i)=(D^2, S^1)$ for each $i$, we use the notation $\zk$ for $(D^2,S^1)^{\sk}$, and refer to $\zk = (D^2, S^1)^{\sk}$ as the \textit{moment-angle complex}. Also, if $(X_i, A_i)=(X,\pt)$ for each $i$, where $\pt$ denotes the basepoint, we use the abbreviated notation $X^{\sk}$ for $(X,\pt)^{\sk}$.

\begin{theorem}[{\cite[Theorem~4.3.2]{bu-pa15}}]\label{cofib}
	The moment-angle complex $\zk$ is the homotopy fibre of the canonical inclusion $\cpk \hookrightarrow (\C P^{\infty})^m$.
\end{theorem}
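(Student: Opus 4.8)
The plan is to present the inclusion $j\colon\cpk\hookrightarrow(\C P^{\infty})^m$, up to homotopy, as the projection of the pullback of a principal $T^m$-bundle with contractible total space; since the homotopy fibre of such a map is the total space of the pullback, this will give the theorem. As a preliminary step I would replace the pair $(D^2,S^1)$ used to build $\zk$ by a homotopy equivalent pair coming from $ES^1$. Fix the standard free $S^1$-action on $S^\infty=ES^1$ with quotient $\C P^{\infty}=BS^1$, choose the basepoint $\pt\in\C P^{\infty}$, and let $S^1_0\subset S^\infty$ be the $S^1$-orbit lying over $\pt$, so that $S^1_0$ is precisely the preimage of $\pt$ under $S^\infty\to\C P^{\infty}$. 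Since $S^\infty$ is contractible, any homeomorphism $S^1\xrightarrow{\ \cong\ }S^1_0$ extends to a map of pairs $(D^2,S^1)\to(S^\infty,S^1_0)$; this map is a weak equivalence on total spaces (both are contractible) and a homeomorphism on subspaces, and as all the inclusions involved are cofibrations it is a homotopy equivalence of pairs. Homotopy invariance of the polyhedral product then yields $\zk=(D^2,S^1)^{\sk}\simeq(S^\infty,S^1_0)^{\sk}$.

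Next consider the product bundle $p\colon(S^\infty)^m\to(\C P^{\infty})^m$, a principal $T^m$-bundle with $T^m=(S^1)^m$ and contractible total space $(S^\infty)^m=ET^m$, in particular a Hurewicz fibration. I claim that the pullback of $p$ along $j$ is exactly $(S^\infty,S^1_0)^{\sk}$. Indeed, since $p$ is surjective this pullback is $p^{-1}\bigl(\cpk\bigr)$, and a point $(y_1,\dots,y_m)\in(S^\infty)^m$ satisfies $(p(y_1),\dots,p(y_m))\in\cpk$ if and only if there is a simplex $I\in\sk$ with $p(y_i)=\pt$ for all $i\notin I$, i.e.\ with $y_i\in S^1_0$ for all $i\notin I$ --- and this is precisely the condition defining $(S^\infty,S^1_0)^{\sk}$ as a subspace of $(S^\infty)^m$.

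Now assemble the commutative square whose top edge is the inclusion $(S^\infty,S^1_0)^{\sk}\hookrightarrow(S^\infty)^m$, bottom edge is $j\colon\cpk\hookrightarrow(\C P^{\infty})^m$, and right edge is $p$. By the previous paragraph this is a strict pullback square along the fibration $p$, hence a homotopy pullback, so the homotopy fibres of its two horizontal maps agree; thus the homotopy fibre of $j$ is homotopy equivalent to the homotopy fibre of $(S^\infty,S^1_0)^{\sk}\hookrightarrow(S^\infty)^m$. Since $(S^\infty)^m$ is contractible, the homotopy fibre of any map into it is homotopy equivalent to the source, and therefore the homotopy fibre of $j$ is homotopy equivalent to $(S^\infty,S^1_0)^{\sk}\simeq\zk$. (An alternative route runs through the Borel construction, identifying $\cpk\simeq ET^m\times_{T^m}\zk$ compatibly with $j$; the argument above is the more hands-on version of this.) There is no deep obstacle here: the only points needing genuine care are the homotopy invariance of the polyhedral product for pairs of CW complexes, and --- the step I would be most careful about --- the bookkeeping in the second paragraph, where $S^1_0$ must be taken to be the fibre over the basepoint so that the defining condition of $(S^\infty,S^1_0)^{\sk}$ matches coordinate-by-coordinate the condition that cuts out $\cpk$ inside $(\C P^{\infty})^m$.
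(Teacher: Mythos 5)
Your argument is correct and is essentially the standard proof of this result, which the paper only quotes from \cite[Theorem~4.3.2]{bu-pa15}: one replaces $(D^2,S^1)$ by the homotopy equivalent pair $(S^\infty,S^1)$, identifies $(S^\infty,S^1)^{\sk}$ as the strict pullback of the universal $T^m$-bundle $(S^\infty)^m\to(\C P^\infty)^m$ along $\cpk\hookrightarrow(\C P^\infty)^m$, and concludes from contractibility of $(S^\infty)^m$. All the delicate points (homotopy invariance of the polyhedral product for cofibred pairs, and taking $S^1_0$ to be the fibre over the basepoint so the coordinatewise conditions match) are handled correctly.
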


There is also the following more explicit description of the fibre inclusion $\mathcal Z_{\sk} \to \cpk$ in~\eqref{zkfib}. Consider the map of pairs $(D^2, S^1) \to (\C P^{\infty}, pt)$ sending the interior of the disc homeomorphically onto the complement of the basepoint in $\C P^1$. By the functoriality, we have the induced map of the polyhedral products $\mathcal{Z_K} = (D^2, S^1)^{\sk} \to \cpk$.

The general definition of higher Whitehead products can be found
in~\cite{hard61}. We only describe Whitehead products in the space
$\cpk$ and their lifts to $\zk$. In this case
the indeterminacy of higher Whitehead products can be controlled
effectively because extension maps can be chosen canonically.

Consider the \emph{$i$th coordinate map}
\[
  \mu_i\colon(D^2, S^1) \to S^2 \cong \C P^1 \hookrightarrow (\C P^{\infty})^{\vee m}  
  {}\hookrightarrow \cpk.
\]
Here the second map is the canonical inclusion of $\C P^1$ into the $i$-th summand of the wedge. The third map is induced by the embedding of $m$ disjoint points into $\sk$.
The \emph{Whitehead product} (or \emph{Whitehead bracket}) $[\mu_i, \mu_j]$ of $\mu_i$ and $\mu_j$
is the homotopy class of the map
\[
  S^3 \cong \partial D^4 \cong \partial (D^2\times D^2) \cong (D^2\times S^1)\cup
  (S^1\times D^2) \xrightarrow{[\mu_i,\mu_j]} \cpk
\]
where 
\[
[\mu_i, \mu_j](x, y) =
\begin{cases}
\mu_i(x) \quad \text{for $(x,y)\in D^2\times S^1$};\\
\mu_j(y) \quad \text{for $(x,y)\in S^1\times D^2$}.
\end{cases}   
\]

Every Whitehead product $[\mu_i,\mu_j]$ becomes trivial after composing with the embedding
$\cpk \hookrightarrow (\C P^{\infty})^m \simeq K(\Z^m, 2)$.
This implies that
$[\mu_i, \mu_j]\colon S^3 \to (\C P^{\infty})^{\sk}$ lifts to the fibre $\zk$, as shown next:
\begin{center}
\begin{tikzcd}
\zk\arrow{r} & \cpk \arrow{r} & (\C P^{\infty})^m\\
&S^3\arrow[u,"{[\mu_i,\mu_j]}"']\arrow[dashed]{ul}
\end{tikzcd}
\end{center}
We use the same notation $[\mu_i,\mu_j]$ for a lifted map $S^3\to\zk$. Such a lift can be chosen canonically as the inclusion of a subcomplex
\[
  [\mu_i, \mu_j]\colon S^3 \cong(D^2\times S^1)\cup(S^1\times D^2) \hookrightarrow \zk.
\]

The Whitehead product $[\mu_i, \mu_j]$
is trivial if and only if the map $[\mu_i, \mu_j]\colon S^3 \to \zk$
can be extended to a map
$D^4 \cong D^2_i\times D^2_j \hookrightarrow\zk$. This is equivalent to the condition that $\Delta(i,j)=\{i,j\}$ is a $1$-simplex of~$\sk$.

\emph{Higher Whitehead products} are defined inductively as follows.
Let $\mu_{i_1},\dots,\mu_{i_n}$ be a collection of maps such
that the $(n-1)$-fold product
\[
  [\mu_{i_1},\dots,\widehat{\mu_{i_k}},\dots, \mu_{i_n}]\colon S^{2(n-1)-1}\to(\C P^{\infty})^{\sk}
\]
is trivial for any $k$.
Then there exists a \emph{canonical}
extension $\overline{[\mu_{i_1},\dots,\widehat{\mu_{i_k}},\dots, \mu_{i_n}]}$
to a map from $D^{2(n-1)}$ given by the composite
\[
\overline{[\mu_{i_1},\ldots, \widehat{\mu_{i_k}},\ldots, \mu_{i_n}]}\colon
D^2_{i_1}\times\cdots\times D^2_{i_{k-1}}\times D^2_{i_{k+1}}\times\cdots\times
D^2_{i_n}\hookrightarrow \zk \to (\C P^{\infty})^{\sk}.
\] 
Furthermore, all these extensions are compatible on the subproducts corresponding to the vanishing brackets of shorter length.
The \emph{$n$-fold product} $[\mu_{i_1},\ldots, \mu_{i_n}]$ is defined
as the homotopy class of the map
\[
S^{2n-1} \cong \partial(D^2_{i_1}\times\cdots\times D^2_{i_n})\cong
\bigcup\limits_{k=1}^n(D^2_{i_1}\times\cdots\times S^1_{i_k}\times\cdots
\times D^2_{i_n})\xrightarrow{[\mu_{i_1},\ldots, \mu_{i_n}]}
\cpk
\]
which is given by
\[
	[\mu_{i_1},\ldots, \mu_{i_n}](x_1,\ldots, x_n) =
	\overline{[\mu_{i_1},\ldots,\widehat\mu_{i_k},\ldots, \mu_{i_n}]}
		(x_1,\ldots,\widehat x_k,\ldots, x_n)\quad\text{if }\;x_k\in S^1_{i_k}.
\]

In~\hr[Proposition]{singlent} below we show that $[\mu_{i_1},\ldots,\mu_{i_p}]$ is defined in $\pi_{2p-1}(\cpk)$ if and only if $\partial \Delta(i_1,\ldots,i_p)$ is a subcomplex of~$\sk$, and $[\mu_{i_1},\ldots,\mu_{i_p}]$ is trivial if and only if $\Delta(i_1,\ldots,i_p)$ is a simplex of~$\sk$.  

Alongside with higher Whitehead products of canonical coordinate maps $\mu_i$
we consider \emph{general iterated} higher 
Whitehead products, i.\,e. higher Whitehead products in which
arguments can be higher Whitehead products.
For example, 
\[
  \Bigl[\mu_1,\mu_2,[\mu_3,\mu_4,\mu_5],
  \bigl[\mu_6,\mu_{13},[\mu_7,\mu_8,\mu_9],\mu_{10}\bigr],[\mu_{11},\mu_{12}]\Bigr].
\]
Among general iterated higher Whitehead products we distinguish \emph{nested} products, which have the form
\[
  w = \Big[\big[\dots\big[[\mu_{i_{11}},\ldots, \mu_{i_{1p_1}}], \mu_{i_{21}},\dots, \mu_{i_{2p_2}}\big],  
  \dots\big], \mu_{i_{n1}},\dots, \mu_{i_{np_n}}\Big] \colon S^{d(w)}\to\cpk.
\]
Here $d(w)$ denotes the dimension of $w$. Sometimes we refer to $[\mu_{i_1}, \dots, \mu_{i_p}]$ as a \emph{single} (noniterated) higher Whitehead product. 

As in the case of ordinary Whitehead products any iterated higher Whitehead product lifts to a map $S^{d(w)} \to\zk$ for dimensional reasons.    

\begin{definition}\label{wdelta} We say that a simplicial complex $\sk$ \textit{realises} a higher iterated Whitehead product $w$ if $w$ is a nontrivial element of $\pi_*(\zk)$.
\end{definition}

\begin{example}\label{singlereal}
The complex $\partial \Delta(i_1,\ldots,i_p)$ realises the single higher Whitehead product $[\mu_{i_1},\ldots,\mu_{i_p}]$. 
\end{example}

\begin{construction}[cell decomposition of~$\zk$]\label{celld}
Following~\cite[\S4.4]{bu-pa15}, we decompose the disc $D^2$ into 3 cells: the point $1\in D^2$ is the 0-cell; the complement to $1$ in the boundary circle is the 1-cell, which we denote by~$S$; and the interior of $D^2$ is the 2-cell, which we denote by~$D$. These cells are canonically oriented as subsets of $\R^2$. By taking products we obtain a cellular decomposition of~$(D^2)^m$, in which cells are encoded by pairs of subsets $J,I\subset [m]$ with $J\cap I=\varnothing$: the set $J$ encodes the $S$-cells in the product and $I$ encodes the $D$-cells. We denote the cell of $(D^2)^m$ corresponding to a pair $J,I$ by $\varkappa(J,I)$:
\begin{align*}
  \varkappa(J, I)& =\prod_{i\in I}D_i\times\prod_{j\in J}S_j\\
   &  = \big\{(x_1, \ldots, x_m) \in (D^2)^m\;\big|\\
   &\qquad\qquad\qquad\text{$x_i \in D$ for $i\in I$, $x_j \in S$ for $j   \in J$ and $x_l = 1$ for $l\notin J\cup I$} \big\}.
\end{align*}
Then $\zk$ is a cellular subcomplex in~$(D^2)^m$; we have $\varkappa(J,I)\subset\zk$ whenever $I\in\sk$.

Given a subset $J\subset [m]$, we denote by $\sk_J$ the \emph{full subcomplex} of~$\sk$ on~$J$, that is, 
\[
  \sk_J = \{I\in \sk|\, I\subset J\}.
\]
Let $C_{p-1}(\sk_J)$ denote the group of $(p-1)$-dimensional simplicial chains of~$\sk_J$; its basis consists of simplices $L\in\sk_J$, $|L|=p$. We also denote by $\mathcal C_q(\zk)$ the group of $q$-dimensional cellular chains of~$\zk$ with respect to the cell decomposition described above.
\end{construction}

\begin{theorem}[see~{\cite[Theorems~4.5.7, 4.5.8]{bu-pa15}}]\label{hochster}
	The homomorphisms
	\[
		C_{p-1}(\sk_J) \longrightarrow \mathcal C_{p+|J|}(\mathcal{Z_K}),\quad
     {L\mapsto \sign(L,J)\varkappa(J\backslash L, L)}
	\]
	induce injective homomorphisms
	\begin{align*}
		\widetilde H_{p-1}(\sk_J) \hookrightarrow H_{p+|J|}(\mathcal{Z_K}),
	\end{align*}
which are functorial with respect to simplicial inclusions. Here $L\in \sk_J$ is a simplex, and $\sign(L,J)$ is the sign of the shuffle $(L,J)$. The inclusions above induce an isomorphism of abelian groups
	\[
		\bigoplus\limits_{J\subset [m]}\widetilde H_*(\sk_J) 
     \stackrel\cong\longrightarrow H_*(\mathcal{Z_K}).
	\]
The cohomology versions of these isomorphisms combine to form a ring isomorphism
\[
	\bigoplus\limits_{J\subset [m]}\widetilde H^*(\sk_J) 
   \stackrel\cong\longrightarrow H^*(\mathcal{Z_K}).
\]
where the ring structure on the left hand side is given by the
maps
\[
  H^{k-|I|-1}(\sk_{I})\otimes H^{\ell-|J|-1}(\sk_{J})\to
  H^{k+\ell-|I|-|J|-1}(\sk_{I\cup J})
\]
which are induced by the canonical simplicial inclusions $\sk_{I\cup
J}\to\sk_I\mathbin{*}\sk_J$ for $I\cap J=\varnothing$ and are zero
for $I\cap J\ne\varnothing$.
\end{theorem}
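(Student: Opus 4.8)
The plan is to prove all three statements at the level of the cellular (co)chain complex of~$\zk$, using the cell decomposition of \hr[Construction]{celld}. First I would write down the cellular differential. In~$D^2$ one has $\partial D=S$ and $\partial S=0$ for the chosen orientations, so the Leibniz rule applied to the product cell $\varkappa(J,I)=\prod_{i\in I}D_i\times\prod_{j\in J}S_j$ yields
\[
  \partial\varkappa(J,I)=\sum_{i\in I}\pm\,\varkappa\bigl(J\cup\{i\},\,I\setminus\{i\}\bigr).
\]
Every cell on the right lies in~$\zk$ since $I\setminus\{i\}\subset I\in\sk$, so $\mathcal C_*(\zk)$ is a subcomplex, and the subset $\Omega:=I\cup J$ is preserved by~$\partial$. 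Hence $\mathcal C_*(\zk)=\bigoplus_{\Omega\subset[m]}\mathcal C_{*,\Omega}$, with $\mathcal C_{*,\Omega}$ spanned by the cells $\varkappa(\Omega\setminus L,L)$, $L\in\sk_\Omega$. One then checks that $L\mapsto\sign(L,\Omega\setminus L)\,\varkappa(\Omega\setminus L,L)$ identifies $\mathcal C_{q,\Omega}$ with $\widetilde C_{q-|\Omega|-1}(\sk_\Omega)$, the augmented simplicial chain complex of~$\sk_\Omega$ placed in cellular degree~$q$; the sign $\sign(L,\Omega\setminus L)$ is chosen precisely so that the cellular boundary above becomes the simplicial boundary $L\mapsto\sum_i\pm(L\setminus\{i\})$. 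Passing to homology gives $H_{p+|J|}(\zk)=\bigoplus_J\widetilde H_{p-1}(\sk_J)$, with each $\widetilde H_{p-1}(\sk_J)$ sitting inside $H_{p+|J|}(\zk)$ as a direct summand; functoriality is immediate, since a simplicial inclusion $\sk\subset\mathcal L$ induces a cellular inclusion $\zk\hookrightarrow\mathcal Z_{\mathcal L}$ respecting the $\Omega$-decomposition and restricting to $\sk_\Omega\subset\mathcal L_\Omega$ on each summand. Dualising gives the analogous additive splitting of~$H^*(\zk)$.

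For the ring structure, I would use that the cup product on $H^*(\zk)$ is induced by the surjection of cellular cochain rings $\mathcal C^*\bigl((D^2)^m\bigr)\twoheadrightarrow\mathcal C^*(\zk)$ (restriction of cochains along $\zk\hookrightarrow(D^2)^m$, with a compatible cellular diagonal). The first step is to identify the cochain ring of a single~$D^2$: writing $1,t,x$ for the duals of the cells $\pt$, $S$, $D$, the cellular diagonal approximation on~$D^2$ can be chosen --- using that $D^2\times D^2$ is contractible, so the extension over the $2$-cell can be taken with zero $S\otimes S$-component --- so that $t^2=tx=x^2=0$ while $dt=x$. Taking the $m$-fold tensor product, $\mathcal C^*\bigl((D^2)^m\bigr)$ has as a basis the monomials $\prod_{j\in J}t_j\prod_{i\in I}x_i$, dual to the cells $\varkappa(J,I)$, and the product of two such monomials equals $\pm$ the merged monomial when their supports $I\cup J$ and $I'\cup J'$ are disjoint and is~$0$ otherwise. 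The monomials with $I\notin\sk$ span an ideal (since $I\subset I\cup I'$), whose quotient is precisely $\mathcal C^*(\zk)$.

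It then remains to transport this product through the $\Omega$-splitting. When $\Omega\cap\Omega'\ne\varnothing$ the product of the summands $\mathcal C^*_\Omega$ and $\mathcal C^*_{\Omega'}$ vanishes, which gives the clause ``$0$ for $I\cap J\ne\varnothing$''. When $\Omega\cap\Omega'=\varnothing$, the product carries $\varkappa(\Omega\setminus L,L)^*\otimes\varkappa(\Omega'\setminus L',L')^*$ to $\pm\,\varkappa\bigl((\Omega\cup\Omega')\setminus(L\sqcup L'),\,L\sqcup L'\bigr)^*$, which under the identification of the summands with regraded simplicial cochains is $\pm(L\sqcup L')^*$; that is, it is the dual of the chain-level join map $\widetilde C_*(\sk_\Omega)\otimes\widetilde C_*(\sk_{\Omega'})\to\widetilde C_{*+1}\bigl(\sk_\Omega\mathbin{*}\sk_{\Omega'}\bigr)$, followed by restriction along $\sk_{\Omega\cup\Omega'}\hookrightarrow\sk_\Omega\mathbin{*}\sk_{\Omega'}$ (which kills the monomials with $L\sqcup L'\notin\sk$). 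Invoking the standard fact that the join map induces the K\"unneth isomorphism $\widetilde H_i(\sk_\Omega)\otimes\widetilde H_j(\sk_{\Omega'})\xrightarrow{\ \cong\ }\widetilde H_{i+j+1}\bigl(\sk_\Omega\mathbin{*}\sk_{\Omega'}\bigr)$ in reduced homology and dualising, one identifies the cup product on $\bigoplus_J\widetilde H^*(\sk_J)$ with the stated composite $\widetilde H^{k-|I|-1}(\sk_I)\otimes\widetilde H^{\ell-|J|-1}(\sk_J)\to\widetilde H^{k+\ell-|I|-|J|-1}(\sk_{I\cup J})$. The hard part is exactly this ring computation: one must choose the cellular diagonal on~$D^2$ carefully enough that $t^2,tx,x^2$ vanish on the nose rather than merely up to homotopy, and then carry the signs correctly through the identification $\mathcal C^*_\Omega\cong\widetilde C^{\,*-|\Omega|-1}(\sk_\Omega)$ and through the join map, so that no spurious sign survives in the final formula; the additive statement, by contrast, is routine bookkeeping once the differential is in hand.
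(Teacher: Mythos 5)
Your argument is correct and is essentially the proof of the source that the paper itself cites for this statement (\cite[Theorems~4.5.7, 4.5.8]{bu-pa15}, together with \cite{b-b-p04}): the splitting of the cellular chains of $\zk$ by the support $\Omega=I\cup J$ gives the additive isomorphisms and their functoriality, and the multiplicative part is exactly the identification of $\mathcal C^*(\zk)$ with the cellular cochain algebra $R^*(\sK)$ recalled in Theorems~\ref{zktor} and~\ref{zkkos}. The one step you justify too quickly is the choice of a cellular diagonal on $D^2$ with vanishing $S\otimes S$-component: contractibility of $D^2\times D^2$ only gives \emph{some} extension over the $2$-cell, and to kill that coefficient one should observe that the approximated boundary loop already bounds in $(D^2\times 1)\cup(1\times D^2)$ (this is the explicit approximation of \cite{b-b-p04}), and also note that the resulting coordinatewise diagonal and homotopy take $\zk$ into $\zk\times\zk$, so that the cochain-level product really computes the cup product of~$\zk$.
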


\section{The Hurewicz image of a higher Whitehead product}\label{hirewicz_image_section}

Here we consider the Hurewicz homomorphism $h\colon\pi_*(\zk)\to H_*(\zk)$. The canonical
cellular chain representing the Hurewicz image $h(w)\in H_*(\zk)$ of a \emph{nested} higher Whitehead product~$w$ was described in~\cite{abra}.

\begin{lemma}[{\cite[Lemma~4.1]{abra}}]\label{hurewicz_image_old}
The Hurewicz image
\[
h\left(\Big[\big[\dots\big[[\mu_{i_{11}},\ldots, \mu_{i_{1p_1}}], \mu_{i_{21}},\dots, \mu_{i_{2p_2}}\big],  
\dots\big], \mu_{i_{n1}},\dots, \mu_{i_{np_n}}\Big]\right) \in H_{2(p_1 + \cdots + p_n) - n}(\mathcal {Z_K})
\]
is represented by the cellular chain
\[
  h_c(w)=\prod\limits_{k = 1}^n \left(\sum\limits_{j = 1}^{p_k} D_{i_{k1}}\cdots    
  D_{i_{k(j-1)}}S_{i_{kj}}D_{i_{k(j+1)}}\cdots D_{i_{kp_k}} \right).
\]
\end{lemma}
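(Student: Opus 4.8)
The plan is to compute the Hurewicz image $h(w)=w_*[S^{d(w)}]\in H_*(\zk)$ from an explicit cellular representative of the lift of~$w$ to~$\zk$, arguing by induction on the nesting depth~$n$. The preliminary step is to fix, for every nested~$w$, a \emph{canonical} cellular representative $\widetilde w\colon S^{d(w)}\to\zk$: for a single higher Whitehead product it is the inclusion of the boundary sphere $\partial\bigl(D^2_{i_1}\times\cdots\times D^2_{i_{p_1}}\bigr)$ of a polydisc, as in Section~\ref{prelimi}; for an iterated product it is assembled recursively from the canonical representatives of the sub-brackets together with the compatible extensions $\overline{[\ \cdots\ ]}$, following \cite{pa-ra08,abra}, and the source $S^{d(w)}$ acquires the corresponding ``boundary of a product'' CW structure. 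Once $\widetilde w$ is cellular, $h(w)$ is represented by the cellular cycle $\widetilde w_\#(c)$, where $c\in\mathcal C_{d(w)}\bigl(S^{d(w)}\bigr)$ is the fundamental cycle of the source sphere --- a signed sum of its top cells --- so the whole problem reduces to computing the chain $\widetilde w_\#(c)$ in $\mathcal C_*(\zk)\subset\mathcal C_*\bigl((D^2)^m\bigr)$.

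For the base case $n=1$, $w=[\mu_{i_1},\ldots,\mu_{i_{p_1}}]$ and $\widetilde w$ is a cellular homeomorphism of $\partial\bigl(D^2_{i_1}\times\cdots\times D^2_{i_{p_1}}\bigr)$ onto its image in~$\zk$, so $\widetilde w_\#(c)$ is just the cellular boundary
\[
  \partial\bigl(D_{i_1}D_{i_2}\cdots D_{i_{p_1}}\bigr)=\sum_{j=1}^{p_1}D_{i_1}\cdots D_{i_{j-1}}\,S_{i_j}\,D_{i_{j+1}}\cdots D_{i_{p_1}}
\]
computed in $\mathcal C_*\bigl((D^2)^m\bigr)$ with the orientations of $D,S\subset D^2$ from Construction~\ref{celld} (so that $\partial D=S$, $\partial S=0$); all signs are $+1$ because each prefix $D_{i_1}\cdots D_{i_{j-1}}$ has even degree. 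This is $h_c(w)$.

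For the inductive step, write $w=[w',\mu_{i_{n1}},\ldots,\mu_{i_{np_n}}]$ with $w'$ nested of depth $n-1$ on $G'=G_1\sqcup\cdots\sqcup G_{n-1}$, and put $G_n=\{i_{n1},\ldots,i_{np_n}\}$. Triviality of $[\mu_{i_{n1}},\ldots,\mu_{i_{np_n}}]$ gives $G_n\in\sk$ and $(D^2)^{G_n}\subset\zk$, and triviality of each $[w',\mu_{i_{n1}},\ldots,\widehat{\mu_{i_{nj}}},\ldots,\mu_{i_{np_n}}]$ supplies the remaining canonical extensions, all mutually compatible. Feeding these and the canonical representative $\widetilde{w'}$ of the induction hypothesis into the recursive description of $\widetilde w$, I would trace the induced map $\widetilde w_\#$ on cellular chains and establish the factorisation
\[
  \widetilde w_\#(c)\;=\;h_c(w')\cdot\Bigl(\sum_{j=1}^{p_n}D_{i_{n1}}\cdots D_{i_{n(j-1)}}\,S_{i_{nj}}\,D_{i_{n(j+1)}}\cdots D_{i_{np_n}}\Bigr)
\]
in $\mathcal C_*\bigl((D^2)^m\bigr)=\bigotimes_{i=1}^m\mathcal C_*(D^2)$: the second factor is the fundamental cycle $\partial\bigl(D_{i_{n1}}\cdots D_{i_{np_n}}\bigr)$ of $\partial\bigl((D^2)^{G_n}\bigr)$ contributed by the extension over the polydisc $(D^2)^{G_n}$, and because $G'$ and $G_n$ label disjoint disc coordinates the two contributions multiply, again with all signs $+1$ by the same parity count. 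Together with the induction hypothesis $h_c(w')=\prod_{k=1}^{n-1}\bigl(\sum_{j=1}^{p_k}D_{i_{k1}}\cdots S_{i_{kj}}\cdots D_{i_{kp_k}}\bigr)$, this yields $\widetilde w_\#(c)=h_c(w)$ and closes the induction.

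The step I expect to be the main obstacle is precisely the last one: making the recursive CW model of $S^{d(w)}$ and the cellular representative $\widetilde w$ explicit enough to read off $\widetilde w_\#$ on top cells, and checking that exactly the cells contributing to the product above survive (the rest being collapsed) with orientation $+1$. This is where the compatibility of the canonical extensions $\overline{[\ \cdots\ ]}$ in the definition of iterated higher Whitehead products has to be used with care; granting it, the cross-product and sign computations on $(D^2)^m$ are routine and the product formula drops out.
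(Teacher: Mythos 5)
Your proposal is correct and follows essentially the same route as the paper: the statement is proved there as the special (nested) case of the more general \hr[Lemma]{hurewicz_image}, whose proof is exactly your induction on the bracket structure --- writing $S^{d(w)}$ as the boundary of a product of discs, collapsing the disc boundaries, discarding the piece whose image has dimension less than $d(w)$, and multiplying the cellular chains coming from the disjoint groups of coordinates. The one small imprecision is attributional: the factor $\sum_{j} D_{i_{n1}}\cdots S_{i_{nj}}\cdots D_{i_{np_n}}$ arises from the pieces $D^{d'}\times\bigl(D^2_{i_{n1}}\times\cdots\times S^1_{i_{nj}}\times\cdots\times D^2_{i_{np_n}}\bigr)$, whereas the piece $S^{d'-1}\times (D^2)^{G_n}$ carrying the canonical extension over the polydisc is precisely the one collapsed for dimension reasons --- this does not affect your formula.
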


A more general version of this lemma is presented next. It gives a simple recursive formula describing the canonical cellular chain $h_c(w)$ which represents the Hurewicz image of a \emph{general} iterated higher Whitehead product $w \in \pi_*(\zk)$, therefore providing an effective method of identifying nontrivial Whitehead products in the homotopy groups of a moment-angle complex~$\zk$. Some applications are also given below.

\begin{lemma}\label{hurewicz_image}
Let $w$ be a general iterated higher Whitehead product
$$
w = [w_1, \dots, w_q, \mu_{i_1}, \dots, \mu_{i_p}] \in \pi_*(\zk).
$$
Here $w_k$ is a \textup(general iterated\textup) higher Whitehead product for $k = 1, \dots, q$. Then the Hurewicz image $h(w)\in H_*(\zk)$ is represented by the following canonical cellular chain:
\[
  h_c(w) = h_c(w_1)\cdots h_c(w_q)\Big(\sum\limits_{k=1}^p D_{i_1}\cdots D_{i_{k-1}}S_{i_k} 
  D_{i_{k+1}}\cdots D_{i_p}\Big).
\]
\end{lemma}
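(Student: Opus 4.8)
The plan is to prove the formula by induction on the number of brackets in~$w$, building directly on \hr[Lemma]{hurewicz_image_old} as the base case for the purely nested situation and on the recursive nature of the cell decomposition from \hr[Construction]{celld}. First I would set up the geometric picture: the map
\[
  S^{d(w)}\cong\partial\bigl(D^{d(w_1)+1}\times\cdots\times D^{d(w_q)+1}\times D^2_{i_1}\times\cdots\times D^2_{i_p}\bigr)
\]
defining $w=[w_1,\dots,w_q,\mu_{i_1},\dots,\mu_{i_p}]$ is assembled from the canonical extensions $\overline{w_k}$ of the sub-brackets (which exist because $w$ is defined) together with the coordinate discs $D^2_{i_j}$, exactly as in the inductive definition of higher Whitehead products recalled in the Preliminaries. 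The boundary of a product of discs decomposes as the union $\bigcup_k (\cdots\times S(D^{d(w_k)+1})\times\cdots) \cup \bigcup_j(\cdots\times S^1_{i_j}\times\cdots)$, and the fundamental cycle of $S^{d(w)}$ pushes forward to a signed sum of the images of these boundary faces. The key point is that the image of the face $D^{d(w_1)+1}\times\cdots\times S^1_{i_k}\times\cdots\times D^2_{i_p}$ (with the coordinate circle in the $i_k$-th factor) carries the chain $h_c(w_1)\cdots h_c(w_q)\cdot D_{i_1}\cdots S_{i_k}\cdots D_{i_p}$, because on each $D^{d(w_l)+1}$ factor the attaching map $\overline{w_l}$ realises precisely the chain $h_c(w_l)$ by the inductive hypothesis, while the product structure of the cell decomposition of~$(D^2)^m$ multiplies these chains.

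The main steps, in order: (1) identify $h_c(\overline{w_l})$, the cellular chain carried by the canonical extension $\overline{w_l}\colon D^{d(w_l)+1}\to\zk$, with $h_c(w_l)$ — this is essentially the content of \hr[Lemma]{hurewicz_image_old} and its proof, since the extension is obtained by filling in one coordinate disc, and $h_c(w_l)$ as written is already a "$D$-completed" chain; (2) show that under the product cell structure the pushforward of the fundamental class of $\partial(\prod D^{d(w_k)+1}\times\prod D^2_{i_j})$ is the Leibniz-type sum $\sum(\text{boundary in one factor})\times(\text{rest})$ — here I would invoke the Künneth/shuffle description of cellular chains of a product and the fact that $\partial$ of the top cell of a product of discs is the signed sum of codimension-one faces; (3) observe that the faces coming from $S(D^{d(w_k)+1})$ — i.e. those replacing a disc $D^{d(w_k)+1}$ by its boundary sphere — contribute $h_c(\partial w_k)\cdot(\text{rest})$, and that these are exactly the terms that vanish (they map into lower-dimensional skeleta / are killed because the sub-brackets $w_k$ are already nontrivial classes whose boundaries bound in the relevant subcomplex); and (4) assemble the surviving terms, which are precisely $h_c(w_1)\cdots h_c(w_q)\bigl(\sum_{k=1}^p D_{i_1}\cdots S_{i_k}\cdots D_{i_p}\bigr)$, tracking signs via $\sign(L,J)$ from \hr[Theorem]{hochster} so that they match.

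The hard part will be step (3): making precise why the boundary faces $S(D^{d(w_k)+1})\times(\text{rest})$ do not contribute to $h_c(w)$. Geometrically this is because $\overline{w_k}$ restricted to $\partial D^{d(w_k)+1}=S^{d(w_k)}$ is the map $w_k$ itself, and when $w_k$ is nested/iterated its Hurewicz chain $h_c(w_k)$ already consists only of $S$-cells in "product with $D$" form — so replacing the filling disc by its boundary sphere yields a chain that is a boundary in $\mathcal C_*(\zk)$ (it bounds $h_c(w_k)\cdot(\text{rest})$ up to sign), hence contributes $0$ in homology $H_*(\zk)$. I would phrase this carefully using the cellular boundary map and the fact that $\partial\varkappa(J,I)=\sum_{i\in I}\pm\varkappa(J\cup i,\, I\setminus i)$ whenever all the relevant faces lie in~$\zk$, which reduces the vanishing to a direct bookkeeping identity. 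The remaining sign computations are routine once the shuffle signs are fixed consistently with \hr[Theorem]{hochster}, and I would relegate them to a brief verification rather than spell them out in full.
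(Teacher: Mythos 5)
Your overall strategy --- decompose $S^{d(w)}$ as the boundary of a product of discs, identify the contribution of each codimension-one face, and induct on the bracket structure --- is the same as the paper's. But your geometric model of the iterated product is wrong, and the error sits exactly at the point you flag as ``the hard part''. You write $S^{d(w)}\cong\partial\bigl(D^{d(w_1)+1}\times\cdots\times D^{d(w_q)+1}\times D^2_{i_1}\times\cdots\times D^2_{i_p}\bigr)$ and build the map from ``canonical extensions $\overline{w_k}\colon D^{d(w_k)+1}\to\zk$ of the sub-brackets, which exist because $w$ is defined''. Neither claim holds. Dimensionally, the right-hand side has dimension $\sum_k d(w_k)+q+2p-1=d(w)+q$, since $d(w)=\sum_k d(w_k)+2p-1$ (check this against \hr[Lemma]{hurewicz_image_old}); the correct disc for the argument $w_k$ is $D^{d(w_k)}$, with $w_k$ applied through the quotient $D^{d(w_k)}/\partial D^{d(w_k)}\cong S^{d(w_k)}$, exactly as $\mu_i$ is applied through $D^2/S^1$. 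Moreover, the existence of $w$ does not make the arguments $w_k$ null-homotopic --- on the contrary, they are assumed nontrivial throughout (see \hr[Theorem]{realisation}) --- so no extension of $w_k$ over a disc of one higher dimension is available. The null-homotopies that the definition actually supplies are those of the length-$(q+p-1)$ brackets obtained by omitting one argument.

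With the correct model your step (3) is not hard at all, and your proposed justification for it is a symptom of the wrong model. On the face $D^{d(w_1)}\times\cdots\times\partial D^{d(w_k)}\times\cdots\times D^{d(w_q)}\times\prod_j D^2_{i_j}$ the map is the canonical extension of $[w_1,\dots,\widehat{w_k},\dots,w_q,\mu_{i_1},\dots,\mu_{i_p}]$ applied to the remaining coordinates; it is therefore constant along the $\partial D^{d(w_k)}$-direction, so its image lies in a subcomplex of dimension at most $d(w)-d(w_k)+1<d(w)$ and contributes nothing to the degree-$d(w)$ chain. (This is exactly what the contraction $\gamma$ accomplishes in the paper's proof.) In your model, by contrast, the face $S^{d(w_k)}\times(\text{rest})$ would carry $h_c(w_k)$ times the filled-in chains of the other factors --- a nonzero chain of top degree that is neither zero nor a boundary in general --- and your attempted dismissal of it (``it bounds $h_c(w_k)\cdot(\text{rest})$'', ``contributes $0$ in homology'') does not survive inspection: discarding a non-bounding summand would change the homology class. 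So the proposal as written does not prove the lemma; it needs the corrected description of how the arguments $w_k$ sit inside the product of discs, after which the argument does go through along the lines you sketch.
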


We shall refer to $h_c(w)$ as the \emph{canonical} cellular chain for an interated higher Whitehead product~$w$.
In the case of nested products, \hyperref[hurewicz_image]{Lemma~\ref{hurewicz_image}} reduces to \hyperref[hurewicz_image_old]{Lemma~\ref{hurewicz_image_old}}.

\begin{proof}[Proof of \textup{\hr[Lemma]{hurewicz_image}}]
Let $d, d_1, \dots, d_q$ be the dimensions of $w, w_1, \dots, w_q$, respectively. The Whitehead product $w$ is represented by the composite map
	{\small
		\begin{multline}\label{31}
		S^d \cong \partial(D^{d_1}\times\dots\times D^{d_q}\times D^2_{i_1}\times\cdots\times 
		D^2_{i_p})\\
		\cong 
		\bigg(D^{d_1}\times\dots\times D^{d_q}\times
		\Big(\bigcup\limits_{k=1}^p D^2_{i_1}\times\dots\times S^1_{i_k}\times\dots\times
		D^2_{i_p}\Big)\bigg)
		\\\cup
		\bigg(
		\Big(\bigcup_{l=1}^q  D^{d_1}\times\dots\times S^{d_l-1}\times\dots\times D^{d_q}\Big)
		\times D^2_{i_1}\times\dots\times D^2_{i_p}\bigg)
		\\ \stackrel\gamma\longrightarrow
		\bigg(S^{d_1}\times\dots\times S^{d_q}\times
		\Big(\bigcup\limits_{k=1}^p D^2_{i_1}\times\dots\times S^1_{i_k}\times\dots\times   
		D^2_{i_p}\Big)\bigg)
		\\\cup
		\bigg(\Big(\bigcup_{l=1}^q S^{d_1}\times\dots\times\pt\times\dots\times S^{d_q}\Big)
		\times D^2_{i_1}\times\dots\times D^2_{i_p}\bigg)
		\to \zk.
		\end{multline}
	}%
The map $\gamma$ above contracts the boundary of each $D^{d_l}$, $l = 1, \ldots, q$. Note that the whole cartesian product in the last row above has dimension less than $d$, so its Hurewicz image is trivial.

Using the same argument for the spheres $S^{d_1}, \dots, S^{d_q}$, we obtain that $w$ factors through a map from $S^d$ to a union of products of discs and circles, which embeds as a subcomplex in~$\zk$. By the induction hypothesis each sphere $S^{d_k}, k = 1, \dots, q$, maps to the subcomplex of~$\zk$ corresponding to the cellular chain $h_c(w_k)$. Therefore, by~\eqref{31}, the Hurewicz image of $w$ is represented by the subcomplex corresponding to the product of $h_c(w_1), \ldots, h_c(w_q)$ and $\sum\limits_{k=1}^p D_{i_1}\cdots D_{i_{k-1}} S_{i_k} D_{i_{k+1}}\cdots D_{i_p}$.
\end{proof}

As a first corollary we obtain a combinatorial criterion for the nontriviality of a single higher Whitehead product.

\begin{proposition}\label{singlent}
A single higher Whitehead product $[\mu_{i_1},\ldots,\mu_{i_p}]$ is
\begin{itemize}[leftmargin=0.06\textwidth]
\item[\textup(a\textup)] defined in $\pi_{2p-1}(\cpk)$ \textup(and lifts to $\pi_{2p-1}(\zk)$\textup) if and only if $\partial \Delta(i_1,\ldots,i_p)$ is a subcomplex of~$\sk$;
\item[\textup(b\textup)] trivial if and only if $\Delta(i_1,\ldots,i_p)$ is a simplex of~$\sk$.  
\end{itemize}
\end{proposition}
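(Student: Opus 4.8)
Proof proposal for Proposition~\ref{singlent}.

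\textbf{Part (a).} The plan is to unwind the inductive definition of the higher Whitehead product and see exactly which combinatorial conditions make all the intermediate brackets vanish. The product $[\mu_{i_1},\ldots,\mu_{i_p}]$ is defined precisely when each $(p-1)$-fold subproduct $[\mu_{i_1},\ldots,\widehat{\mu_{i_k}},\ldots,\mu_{i_p}]$ is trivial, which by an inner induction on $p$ means each $\Delta(i_1,\ldots,\widehat{i_k},\ldots,i_p)$ is a simplex of $\sk$. Now I would observe that the condition ``$\Delta(I\setminus\{k\})\in\sk$ for every $k\in I$'' is exactly the statement that $\partial\Delta(I)\subset\sk$, since the faces of $\partial\Delta(I)$ are precisely the proper subsets of $I$, and each such proper subset is contained in some $\Delta(I\setminus\{k\})$. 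The converse direction is immediate: if $\partial\Delta(i_1,\ldots,i_p)\subset\sk$, then every facet $\Delta(i_1,\ldots,\widehat{i_k},\ldots,i_p)$ lies in $\sk$, so the $(p-1)$-fold subproducts are defined and trivial, hence the canonical extensions exist and the $p$-fold product is defined. The lift to $\zk$ is then automatic for dimensional reasons, as already noted in the text before \hr[Definition]{wdelta}.

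\textbf{Part (b).} For the ``if'' direction, suppose $\Delta(i_1,\ldots,i_p)=I$ is a simplex of $\sk$. Then the cell $\varkappa(\varnothing,I)=D_{i_1}\cdots D_{i_p}$ lies in $\zk$, and this is a $2p$-cell whose boundary is exactly the image sphere $\partial(D^2_{i_1}\times\cdots\times D^2_{i_p})=\bigcup_k D^2_{i_1}\times\cdots\times S^1_{i_k}\times\cdots\times D^2_{i_p}$ on which the map $[\mu_{i_1},\ldots,\mu_{i_p}]$ is defined (as the canonical subcomplex inclusion into $\zk$). So the map extends over the disc $D_{i_1}\cdots D_{i_p}\subset\zk$ and is therefore null-homotopic. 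For the ``only if'' direction, assume $I=\{i_1,\ldots,i_p\}$ is \emph{not} a simplex of $\sk$; combined with part~(a), $\partial\Delta(I)\subset\sk$, so $I$ is in fact a missing face of $\sk$. I would show nontriviality by computing the Hurewicz image: by \hr[Lemma]{hurewicz_image} (the single-bracket case, $q=0$), $h_c([\mu_{i_1},\ldots,\mu_{i_p}])=\sum_{k=1}^p D_{i_1}\cdots D_{i_{k-1}}S_{i_k}D_{i_{k+1}}\cdots D_{i_p}$, which under the isomorphism of \hr[Theorem]{hochster} is (up to sign) the image of the fundamental cycle of $\partial\Delta(I)\cong S^{p-2}$ in the summand $\widetilde H_{p-2}(\sk_I)$ of $H_{2p-1}(\zk)$. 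Since $I\notin\sk$ but $\partial\Delta(I)\subset\sk$, the full subcomplex $\sk_I$ equals $\partial\Delta(I)$, so this class generates $\widetilde H_{p-2}(\sk_I)=\Z\ne0$; hence $h_c(w)$ is a nontrivial cycle, $h(w)\ne0$, and therefore $w\ne0$ in $\pi_{2p-1}(\zk)$.

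\textbf{Main obstacle.} The routine part is the combinatorial translation in (a) and the extension-over-a-disc argument in the ``if'' half of (b). The step requiring the most care is verifying that the canonical cellular chain $\sum_k D_{i_1}\cdots S_{i_k}\cdots D_{i_p}$ really corresponds, under the explicit map $L\mapsto\sign(L,J)\varkappa(J\setminus L,L)$ of \hr[Theorem]{hochster}, to the simplicial fundamental cycle $\sum_k \pm\Delta(I\setminus\{i_k\})$ of $\partial\Delta(I)$ in $C_{p-2}(\sk_I)$ --- in particular, checking that the shuffle signs $\sign(L,J)$ match the incidence signs in the simplicial boundary so that one genuinely obtains a cycle representing a generator of $\widetilde H_{p-2}(\partial\Delta(I))$ rather than a boundary or zero. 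Once the sign bookkeeping is pinned down, nontriviality of $w$ follows from injectivity of the Hurewicz-compatible inclusion $\widetilde H_{p-2}(\sk_I)\hookrightarrow H_{2p-1}(\zk)$ in \hr[Theorem]{hochster}.
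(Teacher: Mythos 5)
Your proposal is correct and follows essentially the same route as the paper: part (a) by unwinding the inductive definition so that triviality of the $(p-1)$-fold subproducts translates into $\partial\Delta(i_1,\ldots,i_p)\subset\sk$, and the nontriviality half of (b) by computing the Hurewicz image via \hr[Lemma]{hurewicz_image} and identifying it with the generator of $\widetilde H_{p-2}(\sk_{i_1,\ldots,i_p})=\widetilde H_{p-2}(\partial\Delta(I))$ under the isomorphism of \hr[Theorem]{hochster}. Your extra details (the explicit extension over the cell $\varkappa(\varnothing,I)$ for the ``if'' half of (b), and the remark on matching shuffle signs) are points the paper leaves implicit, but they do not change the argument.
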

\begin{proof}
If the Whitehead product $[\mu_{i_1},\ldots,\mu_{i_p}]$ is defined, then each $(p-1)$-fold product $[\mu_{i_1},\dots, \widehat\mu_{i_k}\dots,\mu_{i_p}]$ is trivial. By the induction hypothesis, this implies that $\partial \Delta(i_1,\ldots,i_p)$ is a subcomplex of~$\sk$. 

Suppose that $\Delta(i_1,\ldots,i_p)$ is not a simplex of $\sk$. Then, by \hr[Lemma]{hurewicz_image}, the Hurewicz image $h([\mu_{i_1},\ldots,\mu_{i_p}])$ gives a nontrivial homology class in $H_*(\zk)$ corresponding to $[\partial \Delta(i_1,\ldots,i_p)]\in\widetilde H_*(\sk_{i_1,\dots, i_p})$ via the isomorphism of \hr[Theorem]{hochster}. Thus, $[\mu_{i_1},\ldots,\mu_{i_p}]$ is itself nontrivial.
\end{proof}

This proposition will be generalised to iterated higher Whitehead products in \hr[Section]{realisation_section}.

\medskip

\hr[Lemmata]{hurewicz_image_old}, \ref{hurewicz_image} and \hyperref[hochster]{Theorem~\ref{hochster}} can be used to detect simplicial complexes $\sk$ for which $\zk$ is a wedge of iterated higher Whitehead products. We recall the following definition.

\begin{definition}
A simplicial complex $\sk$ belongs to the class $W_{\Delta}$ if $\zk$ is a wedge of spheres, and each sphere in the wedge is a lift of a linear combination of iterated higher Whitehead products.
\end{definition}

As a first example of application of our method we deduce the results of Iriye and Kishimoto that shifted and totally fillable complexes belong to the class $W_{\Delta}$.

\begin{example}\label{example_ik1}
A simplicial complex $\sk$ is called \emph{shifted} if its vertices can be ordered in such way that the following condition is satisfied: whenever $I\in \sk$, $i\in I$ and $j>i$, we have $(I-i)\cup j\in \sk$.

Let $\mathop\mathrm{MF}\nolimits_m(\sk)$ be the set of missing faces of $\sk$ containing the maximal vertex $m$, i.\,e.
\[
  \mathop\mathrm{MF}\nolimits_m(\sk) = 
  \{I\subset [m]\,|\, I\notin \sk,\; \partial\Delta (I)\subset \sk \text{ and } m\in I\}.
\]
As observed in~\cite{ir-ki1}, for a shifted complex $\sk$ there is a homotopy equivalence
\begin{equation}\label{shifted}
  \sk \simeq \bigvee\limits_{I \in \mathop\mathrm{MF}\nolimits_m(\sk)}\partial \Delta(I)
\end{equation}
(the reason is that the quotient $\sk/\mathop{\mathrm{star}}_m\sk$ is homeomorphic to the wedge on the right hand side of~\eqref{shifted}, by definition of a shifted complex).
Note that a full subcomplex of a shifted complex is again shifted. Then \hyperref[hochster]{Theorem~\ref{hochster}} together with
\eqref{shifted} implies that $H_*(\zk)$ is a free abelian group generated by the homology classes of cellular chains of the form
\begin{equation}\label{ik1_chain}
  \Big(\sum\limits_{l=1}^p D_{i_1}\cdots D_{i_{l-1}} S_{i_l} D_{i_{l+1}}\cdots D_{i_p}\Big)S_{j_1}
  \cdots S_{j_q}
\end{equation}
where $I = \{i_1, \dots, i_p\}\in \mathop\mathrm{MF}\nolimits_m(\sk_{i_1,\ldots,i_p,j_1,\dots, j_q})$. \hyperref[hurewicz_image]{Lemma~\ref{hurewicz_image_old}} implies that~\eqref{ik1_chain} is the canonical cellular chain for the nested Whitehead product
\[
  w = \Big[\Big[\Big[\dots\big[[\mu_{i_1}, \dots, \mu_{i_p}], \mu_{j_1}\big],\dots \Big], 
  \mu_{j_{q-1}}\Big], \mu_{j_q}\Big].
\]
Hence, the following wedge of the Whitehead products
\[
  \bigvee\limits_{J\subset[m]}\bigvee\limits_{\substack{I = \{i_1, \dots, i_p\} \in 
  \mathop\mathrm{MF}_m(\sk_J)\\ J\setminus I=\{j_1, \dots, j_q\}}} 
  \Big[\Big[\Big[\dots\big[[\mu_{i_1}, \dots, \mu_{i_p}], \mu_{j_1}\big],\dots \Big], \mu_{j_{q-1}}\Big],   
  \mu_{j_q}\Big]\colon 
  \!\!\!\!\!\bigvee\limits_{\substack{J\subset [m]\\ 
  I\in\mathop\mathrm{MF}_m(\sk_J)}} S^{d(w)}_{J, I}\to \zk
\]
induces an isomorphism in homology, so it is a homotopy equivalence. Thus, we obtain the following.

\begin{theorem}[{\cite{ir-ki1}}]
Every shifted complex $\sk$ belongs to $W_{\Delta}$.
\end{theorem}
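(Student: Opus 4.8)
The plan is to prove the theorem by exhibiting $\zk$ as a wedge of spheres, each of which is an explicit lift of a nested iterated higher Whitehead product. The argument proceeds in three stages: (i) split $\sk$, and every full subcomplex of it, as a wedge of boundaries of simplices; (ii) feed this into the additive isomorphism of \hr[Theorem]{hochster} to read off an explicit basis of $H_*(\zk)$; and (iii) recognise each basis class as the Hurewicz image of a nested higher Whitehead product via \hr[Lemma]{hurewicz_image_old}, then conclude with a Whitehead-theorem argument.

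First I would nail down the combinatorial input. Let $m$ be the maximal vertex in a shifting order. The closed star $\mathop{\mathrm{star}}_m\sk$ is a cone, hence contractible, and the defining property of a \emph{shifted} complex shows directly that $\sk/\mathop{\mathrm{star}}_m\sk$ is homeomorphic to $\bigvee_{I\in\mathrm{MF}_m(\sk)}\partial\Delta(I)$, giving the homotopy equivalence~\eqref{shifted}. Restricting the shifting order to a subset $J\subset[m]$ exhibits the full subcomplex $\sk_J$ as shifted as well, so~\eqref{shifted} applies verbatim to every $\sk_J$.

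Substituting~\eqref{shifted} for each $\sk_J$ into $\bigoplus_{J\subset[m]}\widetilde H_*(\sk_J)\xrightarrow{\ \cong\ }H_*(\zk)$ from \hr[Theorem]{hochster} then shows that $H_*(\zk)$ is free abelian with a basis indexed by pairs $(J,I)$ with $I=\{i_1,\dots,i_p\}\in\mathrm{MF}_m(\sk_J)$ and $J\setminus I=\{j_1,\dots,j_q\}$; since the maps of \hr[Theorem]{hochster} only relabel and sign cells, the basis class for $(J,I)$ is represented by the cellular chain~\eqref{ik1_chain}. By \hr[Lemma]{hurewicz_image_old}, \eqref{ik1_chain} is exactly the canonical cellular chain $h_c(w)$ of the nested product $w=[[\dots[[\mu_{i_1},\dots,\mu_{i_p}],\mu_{j_1}],\dots],\mu_{j_q}]$. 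Here $w$ is genuinely defined: the innermost bracket $[\mu_{i_1},\dots,\mu_{i_p}]$ is defined because $\partial\Delta(i_1,\dots,i_p)\subset\sk$ (as $\{i_1,\dots,i_p\}$ is a missing face of $\sk_J\subseteq\sk$), and every later bracket is a $2$-fold product; and $w$ lifts to $\pi_{d(w)}(\zk)$ for dimensional reasons.

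Finally, I would assemble these lifted products into a single map $\bigvee_{(J,I)}S^{d(w)}_{J,I}\to\zk$, one sphere per index pair. On reduced homology it sends the wedge generators bijectively onto the basis $\{h_c(w)\}$ of $H_*(\zk)$, hence is a homology isomorphism; both source and target are simply connected, so it is a homotopy equivalence by Whitehead's theorem, exhibiting $\zk$ as a wedge of iterated higher Whitehead products and proving $\sk\in W_\Delta$. The one genuinely nontrivial step, and the sole place where the shifted hypothesis enters, is the splitting~\eqref{shifted} and its stability under passage to full subcomplexes; everything downstream of that — matching~\eqref{ik1_chain} against \hr[Theorem]{hochster} and \hr[Lemma]{hurewicz_image_old}, and the final homology computation — is formal.
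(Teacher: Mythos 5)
Your proposal is correct and follows essentially the same route as the paper: the splitting $\sk_J\simeq\bigvee_{I\in\mathrm{MF}_m(\sk_J)}\partial\Delta(I)$ for every full subcomplex, the additive isomorphism of \hr[Theorem]{hochster} to produce the cellular basis~\eqref{ik1_chain}, identification of each basis chain with $h_c(w)$ for a nested product via \hr[Lemma]{hurewicz_image_old}, and a homology isomorphism upgraded to a homotopy equivalence. The extra details you supply (why each nested bracket is defined, simple connectivity for Whitehead's theorem) are consistent with what the paper leaves implicit.
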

\end{example}

Here is another result which can be proved using \hyperref[hurewicz_image]{Lemma~\ref{hurewicz_image}}.

\begin{example}\label{example_ik2}
A simplicial complex $\sk$ is called \emph{fillable} if there is a collection $\mathop\mathrm{MF}_{\rm fill}(\sk)$ of missing faces $I_1, \dots, I_k$ such that $\sk\cup I_1 \cup\dots\cup I_k$ is contractible. If any full subcomplex of $\sk$ is fillable, then $\sk$ is called \emph{totally fillable}.

Note that homology of any full subcomplex $\sk_J$ in a totally fillable complex $\sk$ is generated by the cycles $\partial\Delta(I)$ for $I\in \mathop\mathrm{MF}_{\mathrm{fill}}(\sk_J)$. As in \hyperref[example_ik1]{Example~\ref{example_ik1}}, $H_*(\zk)$ is a free abelian group generated by the homology classes of cellular chains
\[
  \Big(\sum\limits_{l=1}^p D_{i_1}\dots D_{i_{l-1}} S_{i_l} D_{i_{l+1}}\dots D_{i_p}\Big)
  S_{j_1}\dots S_{j_q},
\]
where $\Delta(i_1, \dots, i_q) \in \mathop\mathrm{MF}_{\rm fill}(\sk_{j_1, \dots, j_p, i_1, \dots, i_q})$. Again, the map
\[
  \bigvee\limits_{J\subset[m]}\bigvee\limits_{\substack{I
  \in \mathop\mathrm{MF}_{\rm fill}(\sk_J)\\ J\setminus I=\{j_1, \dots, j_q\}}}   
  \Big[\Big[\Big[\dots\big[[\mu_{i_1}, \dots, \mu_{i_p}], \mu_{j_1}\big],\dots \Big], \mu_{j_{q-1}}\Big],   
  \mu_{j_q}\Big]\colon \bigvee\limits_{\substack{J\subset [m]\\ 
  I\in\mathop\mathrm{MF}_{\mathrm{fill}}(\sk_J)}} S^{d(w)}_{J, I}\to \zk
\]
is a homotopy equivalence, by the same reasons. We obtain the following.
\begin{theorem}[{\cite{ir-ki2}}]
Every totally fillable complex $\sk$ belongs to $W_{\Delta}$.
\end{theorem}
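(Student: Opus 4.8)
The plan is to fill in the argument sketched in Example~\ref{example_ik2}, which follows the pattern of Example~\ref{example_ik1} with the homotopy decomposition~\eqref{shifted} replaced by the homological consequence of fillability. The three ingredients are: (i)~a free-basis description of $\widetilde H_*(\sk_J)$ for every full subcomplex $\sk_J$; (ii)~the isomorphism of Theorem~\ref{hochster}, used to transport this basis into $H_*(\zk)$; and (iii)~Lemma~\ref{hurewicz_image_old}, which recognises the transported cycles as the canonical cellular chains of nested higher Whitehead products, after which the homology Whitehead theorem completes the proof.

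First I would establish~(i): for each $J\subset[m]$ the group $\widetilde H_*(\sk_J)$ is free abelian with basis $\{[\partial\Delta(I)] : I\in\mathop\mathrm{MF}_{\mathrm{fill}}(\sk_J)\}$. Set $X=\sk_J$ and $X'=X\cup\bigcup_{I\in\mathop\mathrm{MF}_{\mathrm{fill}}(\sk_J)}\Delta(I)$; this is contractible by total fillability. Since the sets $I$ are pairwise distinct missing faces of $\sk_J$, the complex $X'$ is obtained from $X$ by attaching, for each $I$, a single cell $\Delta(I)$ of dimension $|I|-1$ along the fundamental cycle of $\partial\Delta(I)$. Hence the relative cellular chain complex of $(X',X)$ has zero differential, $\widetilde H_n(X',X)$ is free with one generator per $I$ satisfying $|I|=n+1$, and the connecting homomorphism of the pair carries that generator to $[\partial\Delta(I)]\in\widetilde H_{n-1}(X)$. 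As $\widetilde H_*(X')=0$, the long exact sequence of $(X',X)$ gives an isomorphism $\widetilde H_{n-1}(X)\cong\widetilde H_n(X',X)$, so the classes $[\partial\Delta(I)]$ form a free basis.

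Feeding this into Theorem~\ref{hochster}, the isomorphism $\bigoplus_{J}\widetilde H_*(\sk_J)\cong H_*(\zk)$ together with the chain-level formula $L\mapsto\sign(L,J)\varkappa(J\setminus L,L)$ shows that $H_*(\zk)$ is free abelian with basis the homology classes of the cellular chains
\[
  \Big(\sum_{l=1}^{p}D_{i_1}\cdots D_{i_{l-1}}S_{i_l}D_{i_{l+1}}\cdots D_{i_p}\Big)S_{j_1}\cdots S_{j_q},
\]
one for each $J\subset[m]$ and each $I=\{i_1,\dots,i_p\}\in\mathop\mathrm{MF}_{\mathrm{fill}}(\sk_J)$, with $\{j_1,\dots,j_q\}=J\setminus I$. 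By Lemma~\ref{hurewicz_image_old} this chain equals $h_c(w_{J,I})$ for the left-nested product
\[
  w_{J,I}=\Big[\Big[\Big[\dots\big[[\mu_{i_1},\dots,\mu_{i_p}],\mu_{j_1}\big],\dots\Big],\mu_{j_{q-1}}\Big],\mu_{j_q}\Big],
\]
which is well-defined and lifts to $\pi_*(\zk)$: the innermost $p$-fold bracket is defined since $\partial\Delta(I)\subset\sk_J\subset\sk$ by Proposition~\ref{singlent}, and every outer bracket is a two-fold bracket, hence automatically defined. Thus $h(w_{J,I})=[h_c(w_{J,I})]$ is a basis element of $H_*(\zk)$, so each $w_{J,I}$ is nontrivial. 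Assembling the wedge map
\[
  \bigvee_{J\subset[m]}\ \bigvee_{I\in\mathop\mathrm{MF}_{\mathrm{fill}}(\sk_J)}w_{J,I}\colon\ \bigvee_{J,I}S^{d(w_{J,I})}\longrightarrow\zk,
\]
which sends the fundamental class of each sphere to $h(w_{J,I})$, we get a map inducing an isomorphism in homology. Both spaces are simply connected CW complexes ($\zk$ is $2$-connected, and $d(w_{J,I})\ge3$ since missing faces have at least two vertices), so the wedge map is a homotopy equivalence, and $\sk\in W_\Delta$.

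The only step with real content is~(i): converting the mere contractibility of $\sk_J\cup\bigcup\Delta(I)$ into an explicit free basis of $\widetilde H_*(\sk_J)$ given by the chosen boundary cycles, and then tracking the shuffle signs in Theorem~\ref{hochster} precisely enough to identify the transported cycles with the canonical chains $h_c(w_{J,I})$. Once these are settled, the homotopy conclusion is formal, exactly as in Example~\ref{example_ik1}.
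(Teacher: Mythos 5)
Your proposal is correct and follows essentially the same route as the paper's own (sketched) argument: identify the classes $[\partial\Delta(I)]$, $I\in\mathop\mathrm{MF}_{\mathrm{fill}}(\sk_J)$, as a free basis of $\widetilde H_*(\sk_J)$, transport them through Theorem~\ref{hochster}, recognise the resulting chains as $h_c(w_{J,I})$ via Lemma~\ref{hurewicz_image_old}, and conclude with the homology Whitehead theorem, exactly as in Example~\ref{example_ik1}. Your long-exact-sequence argument for the pair $(\sk_J\cup\bigcup\Delta(I),\sk_J)$ is simply a careful filling-in of the paper's one-line claim that these cycles generate $\widetilde H_*(\sk_J)$, so there is nothing genuinely different to compare.
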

\end{example}

\section{Substitution of simplicial complexes}\label{subst}
The combinatorial construction presented here is similar to the~one described in \cite{ayze13} and \cite{b-b-c-g}, although the resulting complexes are different. An analogous construction for building sets was suggested by N.~Erokhovets (see~\cite[Construction~1.5.19]{bu-pa15}).

\begin{definition}\label{substitution_definition}
Let $\sk$ be a simplicial complex on the set $[m]$, and let $\sk_1, \dots, \sk_m$ be a set of $m$ simplicial complexes. We refer to the simplicial complex
\begin{equation}\label{substitution_equation}
\sk(\sk_1, \dots, \sk_m) = \{I_{j_1}\sqcup\dots\sqcup I_{j_k}\,|\; I_{j_l}\in \sk_{j_l},\; l = 1,\dots, k \quad\text{and}\quad \{j_1, \dots, j_k\}\in \sk\}
\end{equation}
as the \emph{substitution} of $\sk_1, \dots, \sk_m$ into $\sk$.

The set of missing faces $\mathop\mathrm{MF}(\sk(\sk_1, \dots, \sk_m))$ of a substitution complex can be described as follows. First, every missing face of each $\sk_i$ is the missing face of $\sk(\sk_1, \dots, \sk_m)$. Second, for every missing face $\Delta(i_1, \dots, i_k)$ of $\sk$ we have the following set of missing faces of the substitution complex:
\[
	\mathop\mathrm{MF}\nolimits_{i_1,\dots, i_k}\bigl(\sk(\sk_1, \dots, \sk_m)\bigr) = 
  \bigl\{\Delta(j_1, \dots, j_k)\,|\; j_l \in \sk_{i_l},\; l =1,\dots, k\bigr\}.
\]
It is easy to see that there are no other missing faces in $\sk(\sk_1, \dots, \sk_m)$, so we have
\[
	\mathop\mathrm{MF}\bigl(\sk(\sk_1, \dots, \sk_m)\bigr) = \mathop\mathrm{MF}  
  (\sk_1)\sqcup\dots\sqcup\mathop\mathrm{MF}
  (\sk_m)\sqcup\hspace{-8mm}\bigsqcup\limits_{\Delta(i_1, \dots, i_k)\in\mathop\mathrm{MF}
  (\sk)}\hspace{-8mm}\mathop\mathrm{MF}\nolimits_{i_1,\dots, i_k}
  \bigl(\sk(\sk_1, \dots, \sk_m)\bigr).
\]
\end{definition}

\begin{figure}[ht]
\begin{tikzpicture}
\coordinate (A1) at ( 0cm, 3cm);
\coordinate (A2) at ( 0cm,-3cm);
\coordinate (A3) at (-2cm, 0cm);
\coordinate (A4) at ( .75cm,-.75cm);
\coordinate (A5) at ( 2cm,  0cm);
	
\fill[gray!20, opacity = 0.3] (A1) -- (A4) -- (A3) -- cycle;
\fill[gray!20, opacity = 0.3] (A1) -- (A4) -- (A5) -- cycle;
\fill[gray!20, opacity = 0.7]  (A2) -- (A4) -- (A5) -- cycle;
\fill[gray!20, opacity = 0.5]  (A2) -- (A4) -- (A3) -- cycle;
	
\foreach \j in {3, 4, 5}	
{
\draw[thick] (A1) -- (A\j) ;
\draw[thick] (A2) -- (A\j) ;
}
\draw[thick] (A3) -- (A4) -- (A5);
\draw[thick, dashed] (A1) -- (A2);
\draw[thick, dashed] (A3) -- (A5);
\foreach \i in {1, 2, ..., 5}
{
\draw[fill=black] (A\i) circle (0.2em);
}
	
\draw[fill = black] (0cm, -0.3cm) circle (0.1em);
	
\draw (A1) node[above right] {\textbf 4};
\draw (A2) node[right] 	      {\textbf 5};
\draw (A3) node[above left] {\textbf 1};
\draw (A4) node[below right] {\textbf 2};
\draw (A5) node[above right] {\textbf 3};
\end{tikzpicture}
\caption{Substitution complex $\partial\Delta(\partial\Delta(1,2,3), 4, 5)$}
\label{12345}
\end{figure}
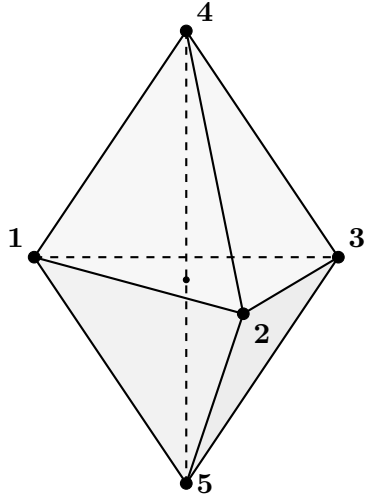

\begin{example}
If each $\sk_i$ is a point $\{i\}$, then $\sk(\sk_1, \dots, \sk_m) = \sk$. In particular, $\partial\Delta^{m-1}(1, \dots, m)= \partial\Delta^{m-1}$. In the case of substitution into a simplex $\Delta^{m-1}$ or its boundary $\partial\Delta^{m-1}$ we shall omit the dimension, so we have $\partial\Delta(1, \dots, m) = \partial\Delta^{m-1}$, which is compatible with the previous notation.
\end{example}

The next example is our starting point for further generalisations.

\begin{example}
Let $\sk = \partial\Delta^{m-1}$ and each $\sk_i$ is a point, except for $\sk_1$. We have $\partial\Delta(\sk_1, i_2,\dots, i_m) = \mathcal J_{m-2}(\sk_1)$, where $\mathcal J_{n}(\mathcal L)$ is the operation defined in \cite[Theorem~5.2]{abra}. 
By \cite[Theorem~6.1]{abra}, the iterated substitution
\[
\partial\Delta\big(\partial\Delta(j_1, \dots, j_q), i_1, \dots, i_p\big)
\]
is the smallest simplicial complex that realises the Whitehead product
\[
\big[[\mu_{j_{1}},\dots, \mu_{j_q}],\mu_{i_1},\dots, \mu_{i_p}\big].
\]

The case $q = 3$, $p = 2$ is shown in  \hr[Figure]{12345}.
\end{example}

The next example will be used in \hr[Theorem]{smallest_realisation}.

\begin{construction}\label{main_substitution}
Here we inductively describe the canonical simplicial complex $\partial\Delta_w$ associated with a general iterated higher Whitehead product~$w$.

We start with the boundary of simplex $\partial\Delta(i_1, \dots, i_m)$ corresponding to a single higher Whitehead product $[\mu_{i_1}, \dots, \mu_{i_m}]$. Now we write a general iterated higher Whitehead product recursively as
\[
w = [w_1, \dots, w_q, \mu_{i_1}, \dots, \mu_{i_p}] \in \pi_*(\zk),
\]
where $w_1,\ldots,w_q$ are nontrivial general iterated higher Whitehead products, $q\geq0$. We assign to $w$ 
the substitution complex 
\[
\partial\Delta_w \overset{\rm def}{=} \partial\Delta(\partial\Delta_{w_1}, \dots, \partial\Delta_{w_q}, i_1,\dots, i_p).
\]
We also define recursively the following subcomplex of $\partial\Delta_w$:
\[
\partial\Delta^{\rm sph}_w = \partial\Delta^{\rm sph}_{w_1}*\dots*\partial\Delta^{\rm sph}_{w_q}*\partial\Delta(i_1, \dots, i_p).
\]
By definition, $\partial\Delta_{w}^{\rm sph}$ is a join of boudaries of simplices, so it is homeomorphic to a sphere. Furthermore, $\dim\partial\Delta^{\rm sph}_w=\dim\partial\Delta_w$.

We refer to the subcomplex $\partial\Delta^{\rm sph}_w$ as the \emph{top sphere of $\partial\Delta_w$}.

For example, the top sphere of $\partial\Delta(\partial\Delta(1,2,3), 4, 5)$ is obtained by deleting the edge $\Delta(4, 5)$, see \hr[Figure]{12345}. 
\end{construction}

\begin{proposition}\label{wedge}
The complex $\partial\Delta_w$ is homotopy equivalent to a wedge of spheres, and the top sphere $\partial\Delta^{\rm sph}_w$ represents the sum of top-dimensional spheres in the wedge.
\end{proposition}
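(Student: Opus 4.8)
The plan is to prove both claims at once by induction on the complexity of $w$, measured by the number of coordinate maps $\mu_i$ occurring in it (equivalently, the number of brackets). In the base case $w=[\mu_{i_1},\dots,\mu_{i_m}]$ we have $\partial\Delta_w=\partial\Delta^{m-1}\simeq S^{m-2}$ and $\partial\Delta_w^{\mathrm{sph}}=\partial\Delta_w$, so there is a single top sphere and nothing to prove. For the inductive step write $w=[w_1,\dots,w_q,\mu_{i_1},\dots,\mu_{i_p}]$ with $q\ge1$, set $L_k=\partial\Delta_{w_k}$, $L_k^{\mathrm{sph}}=\partial\Delta_{w_k}^{\mathrm{sph}}$ and $D_k=\dim L_k$, and assume inductively that each $L_k$ is homotopy equivalent to a wedge of spheres and that $L_k^{\mathrm{sph}}\hookrightarrow L_k$ is a subcomplex homeomorphic to $S^{D_k}$ whose fundamental class maps to the sum of the top-dimensional spherical generators of $\widetilde H_*(L_k)$. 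Here $\partial\Delta_w=\partial\Delta^{q+p-1}(L_1,\dots,L_q,\{i_1\},\dots,\{i_p\})$, and from \hr[Definition]{substitution_definition} one reads off that the faces of $\partial\Delta_w$ are exactly the disjoint unions $F'\sqcup F''$ with $F'$ a face of the join $L_1*\cdots*L_q$ and $F''\subseteq\{i_1,\dots,i_p\}$, subject to the single condition that $\operatorname{supp}_{[q]}(F')\ne[q]$ or $F''\ne\{i_1,\dots,i_p\}$. Breaking this dichotomy into its two alternatives yields $\partial\Delta_w=X\cup Y$, where
\[
X=(L_1*\cdots*L_q)*\partial\Delta(i_1,\dots,i_p),\qquad
Y=\partial\Delta^{q-1}(L_1,\dots,L_q)*\Delta(i_1,\dots,i_p),
\]
and $X\cap Y=\partial\Delta^{q-1}(L_1,\dots,L_q)*\partial\Delta(i_1,\dots,i_p)$.

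Since $Y$ is a join with the full simplex $\Delta(i_1,\dots,i_p)$, it is contractible (assuming $p\ge1$; the cases $p=0,1$, where $\Delta(i_1,\dots,i_p)$ or $\partial\Delta(i_1,\dots,i_p)$ degenerates, are checked by hand), and as $X\cap Y\hookrightarrow Y$ is a cofibration we get $\partial\Delta_w\simeq X/(X\cap Y)$. Writing $Z=\partial\Delta(i_1,\dots,i_p)\cong S^{p-2}$ and using $(A*Z)/(B*Z)\cong(A/B)*Z$ for a subcomplex $B\subseteq A$, this becomes $\bigl((L_1*\cdots*L_q)/\partial\Delta^{q-1}(L_1,\dots,L_q)\bigr)*S^{p-2}$. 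The homotopy-theoretic heart of the proof is the equivalence
\[
\frac{|L_1*\cdots*L_q|}{|\partial\Delta^{q-1}(L_1,\dots,L_q)|}\;\simeq\;\Sigma^{q-1}\bigl(|L_1|_+\wedge\cdots\wedge|L_q|_+\bigr),
\]
which I would establish by induction on $q$ (starting from $L_1/\varnothing=|L_1|_+$), using the decomposition $\partial\Delta^{q-1}(L_1,\dots,L_q)=(L_2*\cdots*L_q)\cup\bigl(L_1*\partial\Delta^{q-2}(L_2,\dots,L_q)\bigr)$ together with standard facts about joins, smash products and mapping cones (such as $(A*B)/(A*C)\cong A*(B/C)$, $A*S^{r}\simeq\Sigma^{r+1}A$, and $\operatorname{cofib}(\operatorname{pr}\colon A\times B\to A)\simeq\Sigma(A_+\wedge B)$). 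Together these give $\partial\Delta_w\simeq\Sigma^{q+p-2}\bigl(|L_1|_+\wedge\cdots\wedge|L_q|_+\bigr)$; as each $|L_k|$, hence each $|L_k|_+$, is a wedge of spheres, and smash products and suspensions of wedges of spheres are again wedges of spheres, $\partial\Delta_w$ is a wedge of spheres.

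For the last assertion, note that $\partial\Delta_w^{\mathrm{sph}}=(L_1^{\mathrm{sph}}*\cdots*L_q^{\mathrm{sph}})*\partial\Delta(i_1,\dots,i_p)$ is a subcomplex of $X$, and the chain of equivalences above is natural in the $L_k$, so under $\partial\Delta_w\simeq\Sigma^{q+p-2}(|L_1|_+\wedge\cdots\wedge|L_q|_+)$ the class $[\partial\Delta_w^{\mathrm{sph}}]$ is the image of the fundamental class of $\Sigma^{q+p-2}\bigl((S^{D_1})_+\wedge\cdots\wedge(S^{D_q})_+\bigr)$ under the map induced by the inclusions $L_k^{\mathrm{sph}}\hookrightarrow L_k$. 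By the inductive hypothesis each of these inclusions carries the top generator of $(S^{D_k})_+\simeq S^0\vee S^{D_k}$ to the sum of the top-dimensional spherical generators of $|L_k|_+$; smashing over $k$ and suspending $q+p-2$ times turns this into the sum of all top-dimensional spherical generators of $\widetilde H_*(\partial\Delta_w)$, which is the assertion of the proposition. The main obstacle is the join/suspension lemma displayed above and, once it is in place, verifying that the identifications are functorial enough to propagate the statement about $\partial\Delta_w^{\mathrm{sph}}$; the combinatorial part — the description of $X$, $Y$ and $X\cap Y$ — is routine.
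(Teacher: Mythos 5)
Your proof is correct in its main case but takes a genuinely different route from the paper's. The paper's proof is a two-line observation: by construction $\partial\Delta_w$ is obtained from its subcomplex $\partial\Delta^{\mathrm{sph}}_w$, a sphere of dimension $\dim\partial\Delta_w$, by attaching simplices of dimension at most that of the sphere, so the attaching maps are null-homotopic and both statements follow at once. You instead decompose $\partial\Delta_w=X\cup Y$ with $Y$ a cone, collapse $Y$, and identify the quotient with $\Sigma^{q+p-2}\bigl(|L_1|_+\wedge\cdots\wedge|L_q|_+\bigr)$, then induct on the bracket structure; this buys strictly more than the proposition asserts, namely an explicit closed formula for the homotopy type of $\partial\Delta_w$ from which one reads off all the spheres in the wedge, with the top-class statement following from naturality and the K\"unneth identification of the top homology with $\bigotimes_k H_{D_k}(L_k)$ -- at the cost of more machinery and the naturality bookkeeping. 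Two remarks. First, your key lemma needs no induction: realising the join as $\bigl(|L_1|\times\cdots\times|L_q|\times\Delta^{q-1}\bigr)/\!\sim$, the subcomplex $\partial\Delta^{q-1}(L_1,\dots,L_q)$ is exactly the locus where some join coordinate $t_k$ vanishes, and since all identifications of the join happen inside that locus, the quotient is $\bigl(|L_1|\times\cdots\times|L_q|\bigr)_+\wedge\bigl(\Delta^{q-1}/\partial\Delta^{q-1}\bigr)\cong\Sigma^{q-1}\bigl(|L_1|_+\wedge\cdots\wedge|L_q|_+\bigr)$ on the nose; also note that $(A*Z)/(B*Z)$ is only homotopy equivalent, not homeomorphic, to $(A/B)*Z$, since one still has to collapse the contractible cone $\mathit{pt}*Z$. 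Second, the case $p=0$, $q\ge2$ is not a routine case to be ``checked by hand'': there $Y$ is no cone, your formula fails, and in fact $\partial\Delta_w=\partial\Delta(L_1,\dots,L_q)$ can be disconnected (for example $\partial\Delta(1,2,3)\sqcup\partial\Delta(4,5,6)$ for $w=\bigl[[\mu_1,\mu_2,\mu_3],[\mu_4,\mu_5,\mu_6]\bigr]$), and $\partial\Delta^{\mathrm{sph}}_w$ is then not even a subcomplex of $\partial\Delta_w$; the paper's own proof tacitly assumes $p\ge1$ in the same way, so this does not affect how the proposition is used in \hr[Theorem]{realisation}, but you should state the restriction explicitly rather than defer it.
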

\begin{proof}
By construction, $\partial\Delta_w$ is obtained from a sphere $\partial\Delta^{\rm sph}_w$ by attaching simplices of dimension at most $\dim\partial\Delta^{\rm sph}_w$. It follows that the attaching maps are null-homotopic, which implies both statements.
\end{proof}

\section{Realisation of higher Whitehead products}\label{realisation_section}

Given an iterated higher Whitehead product~$w$, we show that the substitution complex $\partial\Delta_w$ realises~$w$. Furthermore, for a particular form of brackets inside~$w$, we prove that $\partial\Delta_w$ is the smallest complex that realises~$w$. We also give a combinatorial criterion for the nontriviality of the product~$w$.

Recall from \hr[Proposition]{singlent} that a single higher Whitehead product $[\mu_{i_1},\ldots,\mu_{i_p}]$ is realised by the complex $\partial \Delta(i_1,\ldots,i_p)$. 

\begin{theorem}\label{realisation}
Let $w_1,\ldots,w_q$ be nontrivial iterated higher Whitehead products. 
The complex $\partial\Delta_w$ described in \textup{\hr[Construction]{main_substitution}} realises the iterated higher Whitehead product 
\begin{equation}\label{product_realisation}
  w = [w_1, \dots, w_q, \mu_{i_1}, \dots, \mu_{i_p}].
\end{equation}
\end{theorem}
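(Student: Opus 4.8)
The plan is an induction on the number of nested brackets in~$w$, proving simultaneously the statement of the theorem and the following refinement: under the isomorphism of \hr[Theorem]{hochster} associated with the full vertex set of~$\partial\Delta_w$, the Hurewicz image $h(w)$ is the image of the fundamental class of the top sphere $\partial\Delta^{\rm sph}_w\subset\partial\Delta_w$. The base case is a single higher Whitehead product $w=[\mu_{i_1},\dots,\mu_{i_p}]$: here $\partial\Delta_w=\partial\Delta^{\rm sph}_w=\partial\Delta(i_1,\dots,i_p)$, the product is defined and nontrivial by \hr[Proposition]{singlent}, and $h_c(w)=\sum_{k}D_{i_1}\cdots S_{i_k}\cdots D_{i_p}$ is, under \hr[Theorem]{hochster}, exactly $\pm$ the fundamental cycle of the sphere $\partial\Delta(i_1,\dots,i_p)$.

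For the inductive step write $w=[w_1,\dots,w_q,\mu_{i_1},\dots,\mu_{i_p}]$, let $V_l$ be the vertex set of~$\partial\Delta_{w_l}$, and let $J=V_1\sqcup\dots\sqcup V_q\sqcup\{i_1,\dots,i_p\}$ be the vertex set of~$\partial\Delta_w$. First I would check that $w$ is defined as an element of $\pi_*\bigl((\C P^\infty)^{\partial\Delta_w}\bigr)$ and hence, for dimensional reasons, lifts to $\pi_*(\zk)$ with $\sk=\partial\Delta_w$. It suffices to show that every sub-bracket of~$w$ obtained by deleting one argument is trivial. If $w_k$ is deleted (the case of deleting some $\mu_{i_j}$ is analogous), the resulting iterated product is assembled from the maps $w_l$, $l\ne k$, and $\mu_{i_j}$, all of which land in the full subcomplex of~$\partial\Delta_w$ on the vertex set $J\setminus V_k$. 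By \hr[Definition]{substitution_definition} this full subcomplex is the substitution of the remaining complexes into a \emph{full} simplex, that is, the join of the complexes $\partial\Delta_{w_l}$ ($l\ne k$) and $\Delta(i_1,\dots,i_p)$; its moment-angle complex is therefore the product $\prod_{l\ne k}\mathcal Z_{\partial\Delta_{w_l}}\times\prod_{j=1}^{p}D^2_{i_j}$. Thus the sub-bracket is a higher Whitehead product of maps landing in pairwise distinct factors of a product, so it extends over the whole product of cells coordinatewise and is trivial. Hence $w$ is defined.

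By \hr[Lemma]{hurewicz_image}, the Hurewicz image $h(w)\in H_*(\zk)$, $\sk=\partial\Delta_w$, is represented by the cellular chain
\[
  h_c(w)=h_c(w_1)\cdots h_c(w_q)\Bigl(\sum_{k=1}^{p}D_{i_1}\cdots D_{i_{k-1}}S_{i_k}D_{i_{k+1}}\cdots D_{i_p}\Bigr).
\]
Every vertex of~$\partial\Delta_w$ occurs in every monomial of~$h_c(w)$ (as a $D$- or an $S$-cell), so $h_c(w)$ lies in the summand $\widetilde H_*(\partial\Delta_w)$ of the decomposition of \hr[Theorem]{hochster} corresponding to the full set~$J$. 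Expanding the product and invoking the inductive refinement, every monomial of~$h_c(w)$ equals $\pm\varkappa(J\setminus L,L)$ with $L=(L_1\sqcup\dots\sqcup L_q)\sqcup\{i_1,\dots,\widehat i_k,\dots,i_p\}$, where $L_l$ ranges over the top-dimensional simplices of~$\partial\Delta^{\rm sph}_{w_l}$; since $L$ recovers the tuple $(L_1,\dots,L_q,k)$, distinct monomials give distinct~$L$, each of which is a top-dimensional simplex of the sphere $\partial\Delta^{\rm sph}_w=\partial\Delta^{\rm sph}_{w_1}*\dots*\partial\Delta^{\rm sph}_{w_q}*\partial\Delta(i_1,\dots,i_p)$, and all such simplices occur, with coefficient~$\pm1$. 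As $h_c(w)$ is a cycle and the top-dimensional cycles of the sphere $\partial\Delta^{\rm sph}_w$ form a rank-one group spanned by the fundamental cycle, the simplicial cycle corresponding to~$h_c(w)$ under \hr[Theorem]{hochster} must be $\pm$ the fundamental cycle of~$\partial\Delta^{\rm sph}_w$; this establishes the refinement for~$w$.

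Finally, by \hr[Proposition]{wedge} the complex $\partial\Delta_w$ is homotopy equivalent to a wedge of spheres and $[\partial\Delta^{\rm sph}_w]$ is the sum of the top-dimensional wedge summands, hence a nonzero element of the free abelian group $\widetilde H_*(\partial\Delta_w)$. Since the map of \hr[Theorem]{hochster} is injective, $h(w)\ne0$ in $H_*(\zk)$, and therefore $w$ is a nonzero element of~$\pi_*(\zk)$; that is, $\partial\Delta_w$ realises~$w$. The step I expect to be the main obstacle is the identification in the previous paragraph: \hr[Lemma]{hurewicz_image} only guarantees a priori that $h_c(w)$ is \emph{some} cycle, and one must see that it is supported precisely on the top sphere of~$\partial\Delta_w$ and represents its fundamental class rather than a boundary. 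What makes this work without a direct sign computation is that $h_c(w)$ has support exactly the top simplices of~$\partial\Delta^{\rm sph}_w$ with all coefficients~$\pm1$, so the cycle condition pins it down up to an overall sign, and \hr[Proposition]{wedge} then supplies the non-triviality of its class.
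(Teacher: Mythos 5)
Your proposal is correct and follows essentially the same route as the paper's proof: construct the map and its Hurewicz chain via \hr[Lemma]{hurewicz_image}, identify $h_c(w)$ with the fundamental cycle of the top sphere $\partial\Delta^{\rm sph}_w$ under the isomorphism of \hr[Theorem]{hochster}, and conclude nontriviality from \hr[Proposition]{wedge}. The only difference is that you spell out two points the paper leaves implicit --- the definedness of $w$ via the join/product structure of the relevant full subcomplexes, and the inductive support-and-sign identification of $h_c(w)$ with $\pm[\partial\Delta^{\rm sph}_w]$ --- and both verifications are sound.
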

\begin{proof}
To see that product~\eqref{product_realisation} is defined in $\mathcal Z_{\partial\Delta_w}$ we need to construct the corresponding map $S^{d(w)}\to\mathcal Z_{\partial\Delta_w}$. This is done precisely as described in the proof  of \hr[Lemma]{hurewicz_image}. Furthermore, \hr[Lemma]{hurewicz_image}
gives the cellular chain $h_c(w)\in\mathcal C_*(\mathcal Z_{\partial\Delta_w})$ representing the Hurewicz image~$h(w)\in H_*(\mathcal Z_{\partial\Delta_w})$. The cellular chain $h_c(w)\in\mathcal C_*(\mathcal Z_{\partial\Delta_w})$ corresponds to the simplicial chain $\partial\Delta_w^{\rm sph}\in C_*(\partial\Delta_w)$ via the isomorphism of \hr[Theorem]{hochster}. Now \hr[Proposition]{wedge} implies that the simplicial  homology class $[\partial\Delta_w^{\rm sph}]\in H_*(\partial\Delta_w)$ is nonzero. Thus, $h(w)\ne0$ and the Whitehead product $w$ is nontrivial.
\end{proof}

\smallskip

For a particular configuration of nested brackets, a more precise statement holds.

\begin{theorem}\label{smallest_realisation}
Let $w_j = [\mu_{j_1}, \dots, \mu_{j_{p_j}}]$, $j = 1,\dots, q$,
be nontrivial single higher Whitehead products. 
Consider an iterated higher Whitehead product
\[
w = [w_1, \dots, w_q, \mu_{i_1}, \dots, \mu_{i_p}].
\]
Then the product $w$ is
\begin{itemize}[leftmargin=0.06\textwidth]
\item[\textup(a\textup)]\label{statement_a}
defined in $\pi_*(\zk)$ if and only if $\sk$ contains $\partial\Delta_w=\partial\Delta\big(\partial\Delta_{w_1}, \dots, \partial\Delta_{w_q}, i_1, \dots, i_p\big)$ as a subcomplex, where $\partial\Delta_{w_j} = \partial\Delta(j_1, \dots, j_{p_j}), j = 1,\dots, q$; 
\item[\textup(b\textup)]\label{statement_b}
trivial in $\pi_*(\zk)$ if and only if $\sk$ contains
$$\Delta\big(\partial\Delta_{w_1}, \dots, \partial\Delta_{w_q}, i_1, \dots, i_p\big) = \partial\Delta_{w_1}*\dots*\partial\Delta_{w_q}*\Delta(i_1,\dots, i_p)$$ as a subcomplex.
\end{itemize}
\end{theorem}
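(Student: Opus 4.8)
The plan is to prove both statements by reducing them, via the combinatorial description of missing faces of a substitution complex in Definition~\ref{substitution_definition}, to statements about full subcomplexes, and then applying the Hurewicz-image machinery of Lemma~\ref{hurewicz_image} together with the ring isomorphism of Theorem~\ref{hochster}.

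For part~(a), I would argue by induction on the bracket depth of $w$, the base case being Proposition~\ref{singlent}(a). For the ``only if'' direction: if $w$ is defined, then by the inductive definition of higher Whitehead products every sub-bracket obtained by omitting one argument must be trivial; in particular each $w_j$ must be defined, so by the induction hypothesis $\sk$ contains $\partial\Delta_{w_j}=\partial\Delta(j_1,\dots,j_{p_j})$. Then, using that the $(q+p-1)$-fold products omitting each argument in turn are trivial, one deduces that on the vertex set $V=\{j_l\}\cup\{i_s\}$ the full subcomplex $\sk_V$ must contain every proper face of the substitution $\partial\Delta(\partial\Delta_{w_1},\dots,\partial\Delta_{w_q},i_1,\dots,i_p)$; since the latter is itself the boundary of that substitution, $\sk$ contains $\partial\Delta_w$. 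For the ``if'' direction, Theorem~\ref{realisation} already shows $w$ is defined (indeed nontrivial) in $\mathcal Z_{\partial\Delta_w}$, and functoriality of the polyhedral product under the inclusion $\partial\Delta_w\hookrightarrow\sk$ shows $w$ is defined in $\zk$.

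For part~(b), the ``if'' direction is the easy one: if $\sk$ contains $\partial\Delta_{w_1}*\dots*\partial\Delta_{w_q}*\Delta(i_1,\dots,i_p)$, then the map $S^{d(w)}\to\mathcal Z_{\partial\Delta_w}$ representing $w$ factors through the contractible subcomplex of $\zk$ which is the product of the discs and cones filling in this join (explicitly, the canonical extension of each vanishing bracket has image inside $\zk$), so $w$ is null-homotopic. For the ``only if'' direction I would contrapose: assuming $\sk$ does \emph{not} contain the join $\partial\Delta_{w_1}*\dots*\partial\Delta_{w_q}*\Delta(i_1,\dots,i_p)$, I want to exhibit a nontrivial Hurewicz image. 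By part~(a) we may assume $\sk\supset\partial\Delta_w$, so the offending missing face is $\Delta(i_1,\dots,i_p)$ together with a choice of vertices $j_1^{(0)}\in V(\partial\Delta_{w_1}),\dots,j_q^{(0)}\in V(\partial\Delta_{w_q})$; equivalently, on the vertex set $J=\{j_1^{(0)},\dots,j_q^{(0)},i_1,\dots,i_p\}$ the full subcomplex $\sk_J$ is the boundary $\partial\Delta(J)$ rather than a full simplex, hence $\widetilde H_{|J|-2}(\sk_J)\ne0$. I would then check that the canonical cellular chain $h_c(w)$ from Lemma~\ref{hurewicz_image}, when restricted appropriately, has a component equal (up to sign) to $\sum_{k=1}^{p} D_{i_1}\cdots S_{i_k}\cdots D_{i_p}\cdot S_{j_1^{(0)}}\cdots S_{j_q^{(0)}}$ (taking the ``$S$'' summand of each $h_c(w_j)$), which under the isomorphism of Theorem~\ref{hochster} corresponds to $[\partial\Delta(J)]\in\widetilde H_{|J|-2}(\sk_J)$ and is therefore a nonzero homology class in $H_*(\zk)$ not killed by any other summand in $h_c(w)$.

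The main obstacle I expect is this last step: showing that the relevant component of $h_c(w)$ is genuinely a nonzero \emph{cycle} representing a nonzero class, i.e. that the other terms in the product expansion of $h_c(w_1)\cdots h_c(w_q)\bigl(\sum_k D_{i_1}\cdots S_{i_k}\cdots D_{i_p}\bigr)$ do not interfere — either because they involve $D$-cells on vertices not in $\sk$ (so they are not chains of $\zk$ at all) or because they land in a different $J$-summand of the Hochster decomposition. Handling the bookkeeping of which cells survive in $\mathcal C_*(\zk)$, and checking that the surviving chain is closed, is where the care is needed; the join structure of $\partial\Delta_w^{\mathrm{sph}}$ and the fact that each $w_j$ is a \emph{single} (non-iterated) product — so $h_c(w_j)$ is the simple alternating-type sum of Lemma~\ref{hurewicz_image_old} with $n=1$ — is what keeps this tractable and is precisely why the theorem is stated only for this configuration of brackets.
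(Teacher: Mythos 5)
Your overall strategy for part (a) (deduce it from part (b) applied to the sub-brackets, plus Theorem~\ref{realisation} and functoriality for the ``if'' direction) matches the paper's, and your ``if'' direction of (b) is essentially right once one replaces ``product of the discs'' by the genuine subcomplex $|h_c(w_1)|\times\dots\times|h_c(w_q)|\times D^2_{i_1}\times\dots\times D^2_{i_p}\cong S^{d_1}\times\dots\times S^{d_q}\times(D^2)^p$ of $\zk$ --- the full product of discs over all vertices is \emph{not} a subcomplex of $\zk$, since the $w_j$ are nontrivial. But the ``only if'' direction of (b), which is the heart of the theorem, has two genuine problems in your plan. First, it breaks down entirely when $p=0$: for $w=[w_1,\dots,w_q]$ with $q\ge2$ the Hurewicz image is identically zero, because the representative factors through the fat wedge $\mathop{\mathrm{FW}}(S^{d_1},\dots,S^{d_q})$, which has no homology in degree $d=d_1+\dots+d_q-1$ (each $d_j=2p_j-1\ge3$). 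No amount of care with $h_c(w)$ can detect nontriviality here. The paper instead dualises: triviality of $w$ yields an extension over $S^{d_1}\times\dots\times S^{d_q}$, hence a nonzero cup product $\sigma_1\cdots\sigma_q$ in $H^*(\zk)$, and the \emph{ring} isomorphism of Theorem~\ref{hochster} then forces $\sk$ to contain $\partial\Delta_{w_1}*\dots*\partial\Delta_{w_q}$ as a full subcomplex. This cohomological step is absent from your proposal and cannot be replaced by a Hurewicz argument. (The case $p=1$ is also special: there $h_c(w)=h_c(w_1)\cdots h_c(w_q)S_{i_1}$ is a cycle, and one argues that $w=0$ forces it to \emph{bound}, which forces the full subcomplex on all the vertices involved to be the cone with apex~$i_1$.)

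Second, the mechanism you propose for $p\ge 2$ is incorrect as stated. Every monomial in the expansion of $h_c(w)=h_c(w_1)\cdots h_c(w_q)\bigl(\sum_k D_{i_1}\cdots S_{i_k}\cdots D_{i_p}\bigr)$ involves \emph{every} vertex of $w$: each $h_c(w_j)$ is a sum of terms each of which uses all $p_j$ vertices of $w_j$, one as an $S$-cell and the rest as $D$-cells. Hence the whole chain lies in the single Hochster summand $\widetilde H_*(\sk_V)$ indexed by the full vertex set $V$; there is no component supported on $J=\{j_1^{(0)},\dots,j_q^{(0)},i_1,\dots,i_p\}$, and the class $[\partial\Delta(J)]\in\widetilde H_*(\sk_J)$ never appears in $h_c(w)$. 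The correct statement is that $h_c(w)$ corresponds under Theorem~\ref{hochster} to the fundamental cycle of the top sphere $\partial\Delta_{w_1}*\dots*\partial\Delta_{w_q}*\partial\Delta(i_1,\dots,i_p)$ in $C_*(\sk_V)$, and what must be shown is that this cycle does not bound in $\sk_V$ unless $\sk$ contains the join with the full simplex $\Delta(i_1,\dots,i_p)$; this is the paper's argument and involves no cross-term bookkeeping at all.
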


Note that assertion~(a) implies that $\partial\Delta_w$ is the smallest simplicial complex realising the Whitehead product~$w$.

\begin{proof}
We may assume that $q>0$; otherwise the theorem reduces to the \hr[Proposition]{singlent}.
We consider three cases: $p = 0$; $p = 1$; $p>1$.

\smallskip

\underline{The case $p =0$.}
We have $w = [w_1, \dots, w_q]$. 

We first prove assertion~(b).
Let $d_1, \dots, d_q$ and $d = d_1+\dots+d_q -1$ be the dimensions of the Whitehead products $w_1, \dots, w_q$ and $[w_1, \dots, w_q]$, respectively. The condition that~$w$ vanishes implies the existence of the dashed arrow in the diagram
\begin{center}
\begin{tikzcd}
S^d \arrow{r}\arrow[hookrightarrow]{d} & \mathop{\mathrm{FW}}(S^{d_1}, \dots, S^{d_q}) \arrow{r}\arrow[hookrightarrow]{d} & \zk \\
D^{d+1} \arrow{r} & S^{d_1}\times\dots\times S^{d_q} \arrow[dashed]{ur}
\end{tikzcd}
\end{center}
Here $\mathop{\mathrm{FW}}(S^{d_1}, \dots, S^{d_q})$ denotes the fat wedge of spheres $S^{d_1}, \dots, S^{d_q}$, and the top left arrow is the attaching map of the top cell.

Let $\sigma_j \in H^{d_j}(\zk)$ be the cohomology class dual to the sphere $S^{d_j} \subset \mathop{\mathrm{FW}}(S^{d_1}, \dots, S^{d_q})$, $j=1,\dots, q$. By the assumption, the single Whitehead product $w_j$ is nontrivial, which implies that $\sigma_j\neq 0$ (see \hr[Propostion]{singlent}). The class $\sigma_j \in H^{d_j}(\zk)$ corresponds to the simplicial cohomology class  $\big[\partial\Delta_{w_j}\big]^* \in \widetilde H^*(\sk_{\partial\Delta_{w_j}})$ via the cohomological version of the isomorphism of \hr[Theorem]{hochster}. Here $\sk_{\partial\Delta_{w_j}}$ is the full subcomplex $\partial\Delta_{w_j}$ of~$\sk$.
Since the Whitehead product $[w_1, \dots, w_q]$ is trivial, the cohomology product $\sigma_1\cdots\sigma_q$ is nontrivial in $H^*(\zk)$ (see the diagram above). By the cohomology product description in
\hr[Theorem]{hochster}, this implies that $\sk$ contains $\partial\Delta_1*\dots*\partial\Delta_{w_q}$ as a full subcomplex, and assertion~(b) follows.

To prove assertion~(a), note that the existence of the product $[w_1, \dots,w_q]$ implies that each product $[w_1, \dots, \widehat{w_j},\dots, w_q]$, $j = 1,\dots, q$, is trivial. By assertion~(b), complex $\sk$ contains the union $\bigcup\limits_{j = 1}^q \partial\Delta_{w_1}*\dots*\widehat{\partial\Delta_{w_j}}*\dots*\partial\Delta_{w_q}$ which is precisely $\partial\Delta(\partial\Delta_{w_1}, \dots, \partial\Delta_{w_q})$. This finishes the proof for the case $p = 0$.

\smallskip

\underline{The case $p=1$.}
We have $w = [w_1, \dots, w_q, \mu_{i_1}]$. 

We first prove (b), that is, assume $w=0$. This implies that $[w_1, \dots, w_q]=0$. By the previous case, we know that $\sk$ contains $\Delta(\partial\Delta_{w_1},\dots, \partial\Delta_{w_q})$ as a full subcomplex. 
We need to prove that $\sk$ contains $\Delta(\partial\Delta_{w_1},\dots, \partial\Delta_{w_q})*\Delta(i_1)$, which is a cone with apex~$i_1$. The Hurewicz image $h(w)\in H_*(\zk)$ is zero, because $w$ is trivial. Therefore, the canonical cellular chain 
$h_c(w)=h_c(w_1)\cdots h_c(w_q)S_{i_1}$ (see \hr[Lemma]{hurewicz_image}) is a boundary. By \hr[Theorem]{hochster}, this implies that the simplicial cycle $\partial\Delta_{w_1}*\dots*\partial\Delta_{w_q}$ is a boundary in 
$\sk_{\Delta(\partial\Delta_{w_1},\dots, \partial\Delta_{w_q})\cup\{i_1\}}$. This can only be the case when $\sk_{\Delta(\partial\Delta_{w_1},\dots, \partial\Delta_{w_q})\cup\{i_1\}}=\Delta(\partial\Delta_{w_1},\dots, \partial\Delta_{w_q})*\Delta(i_1)$, proving~(b).

Now we prove (a). By the previous cases, the existence of $w$ implies that $\sk$ contains $\Delta(\partial\Delta_{w_1},\dots, \partial\Delta_{w_q})$ and $\Delta(\partial\Delta_{w_1},\dots,\widehat{\partial\Delta_{w_j}}, \dots, \partial\Delta_{w_q}, i_1)$ for $j = 1,\dots, q$. The union of these subcomplexes is precisely $\partial\Delta(\partial\Delta_{w_1},\dots, \partial\Delta_{w_q}, i_1)$.

\smallskip

\underline{The case $p>1$.}

We induct on $p+q$. We have $w = \big[w_1,\dots, w_q, \mu_{i_1},\dots, \mu_{i_p}\big]$. 

To prove (b), suppose that $w=0$ but $\sk$ does not contain $\partial\Delta_{w_1}*\dots*\partial\Delta_{w_q}*\Delta(i_1,\dots, i_p)$. Then the cellular chain corresponding to $\partial\Delta_{w_1}*\dots*\partial\Delta_{w_q}*\partial\Delta(i_1,\dots, i_p)$ via \hr[Theorem]{hochster} gives a nontrivial homology class in $H_*(\zk)$. This class coincides with the Hurewicz image $h(w)$, by \hr[Lemma]{hurewicz_image}. Hence, the Whitehead product~$w$ is nontrivial. A contradiction.

Assertion~(a) is proved similarly to the case $p=1$.
\end{proof}

\begin{remark}
In our approach, the nontriviality of a higher Whitehead product $w$ is understood as the nontriviality of its canonical representative constructed in~\hr[\S]{prelimi}. Nevertheless, arguments similar to those given in the proof of the case $p=0$ show that the nontriviality assertion in 
\hr[Theorem]{smallest_realisation} remains valid if the nontriviality is understood in the classical sense, that is, as the absence of a trivial homotopy class in the set of all possible extensions.
\end{remark}

\begin{example}\label{exreal}
Consider the Whitehead product $w = \big[[\mu_1, \mu_2, \mu_3], \mu_4, \mu_5\big]$ in the moment-angle complex $\zk$ corresponding to a simplicial complex~$\sk$ on $5$ vertices. For the existence of $w$ it is necessary that the brackets $\big[[\mu_1,\mu_2,\mu_3],\mu_4\big]$, $\big[[\mu_1,\mu_2,\mu_3],\mu_5\big]$ and $[\mu_4,\mu_5]$ vanish. By \hr[Theorem]{smallest_realisation}~(b), this implies that $\sk$ contains subcomplexes $\partial\Delta(1,2,3)*\Delta(4)$, $\partial\Delta(1,2,3)*\Delta(5)$  and $\Delta(4,5)$. In other words, $\sk$ contains the complex $\partial\Delta\big(\partial\Delta(1,2,3),4,5\big)$ shown in~\hr[Figure]{12345}. 
Therefore, the latter is the smallest complex realising the Whitehead bracket $w = \big[[\mu_1, \mu_2, \mu_3], \mu_4, \mu_5\big]$.

The moment-angle complex $\zk$ corresponding to $\sK=\partial\Delta\big(\partial\Delta(1,2,3),4,5\big)$ is homotopy equivalent to the wedge of spheres $(S^5)^{\vee 4}\vee(S^6)^{\vee3}\vee S^7\vee S^8$, and each sphere is a Whitehead product, see~\cite[Example~5.4]{abra}. For example, $S^7$ corresponds to $w = \big[\big[[\mu_3, \mu_4, \mu_5], \mu_1\big], \mu_2\big]$, and $S^8$ corresponds to $w = \big[[\mu_1, \mu_2, \mu_3], \mu_4, \mu_5\big]$.
\end{example}

We expect that \hr[Theorem]{smallest_realisation} holds for all iterated higher Whitehead products:

\begin{problem}\label{proble}
	Is it true that for any iterated higher Whitehead product~$w$ the substitution complex~$\partial\Delta_{w}$ is the smallest complex realising~$w$?
\end{problem}

\section{Resolutions of the face coalgebra}\label{taylor_resolution}
Originally, cohomology of $\zk$ was described in~\cite{bu-pa00} as the $\Tor$-algebra of the face algebra of~$\sk$. As observed in~\cite{b-b-p04}, the Koszul complex calculating the $\Tor$-algebra can be identified with the cellular cochain complex of $\zk$ with respect to the standard cell decomposition.
On the other hand, the $\Tor$-algebra, and therefore cohomology of~$\zk$, can be calculated via the Taylor resolution of the face algebra as a module over the polynomial ring, see~\cite{wa-zh15},~\cite[\S4]{ayze16}. We dualise both approaches by identifying homology of $\zk$ with the $\Cotor$ of the face coalgebra of~$\sk$, and use both co-Koszul and co-Taylor resolutions to describe cycles corresponding to iterated higher Whitehead products.

Let $\k$ be a commutative ring with unit. The \emph{face algebra} $\k[\sk]$ of a simplicial complex $\sk$ is the quotient of the polynomial algebra $\k[v_1,\ldots,v_m]$ by the square-free
monomial ideal generated by non-simplices of~$\sk$:
\[
  \k[\sk]=\k[v_1,\ldots,v_m]\big/\bigl(v_{j_1}\cdots v_{j_k}\ |\
  \{j_1,\ldots,j_k\}\notin \sk\bigr).
\]
The grading is given by $\deg v_j=2$. Given a subset $J\subset[m]$, we denote by $v_J$ the square-free monomial $\prod_{j\in J}v_j$. Observe that 
\[
  \k[\sk]=\k[v_1,\ldots,v_m]\big/\bigl(v_J\ |\ J\in\mathop\mathrm{MF}(\sk)\bigr),
\]
where $\mathrm{MF}(\sk)$ denotes the set of missing faces (minimal non-faces) of~$\sk$. The face algebra $\Z[\sk]$ is also known as the \emph{face ring}, or the \emph{Stanley--Reisner ring} of~$\sk$.

We shall use the shorter notation $\k[m]$ for the polynomial algebra $\k[v_1,\ldots,v_m]$.
Let $M$ and $N$ be two $\k[m]$-modules. The $n$-th derived functor of ${}\cdot\otimes_{\k[m]}N$ is denoted by $\Tor_n^{\k[m]}(M,N)$ or $\Tor^{-n}_{\k[m]}(M,N)$. (The latter notation is better suited for topological application of the Eilenberg--Moore spectral sequence, where the $\Tor$ appears naturally as cohomology of certain spaces.) Namely, given a projective resolution $R^\bullet\to M$ with the resolvents indexed by nonpositive integers, we have
\[
  \Tor^{-n}_{\k[m]}(M,N)=H^{-n}(R^\bullet\otimes_{\k[m]}N).
\] 
The standard argument using bicomplexes and commutativity of the tensor product gives a natural isomorphism
\[
  \Tor^{-n}_{\k[m]}(M,N)\cong\Tor^{-n}_{\k[m]}(N,M).
\]

When $M$ and $N$ are graded $\k[m]$-modules, $\Tor^{-i}_{\k[m]}(M,N)$ inherits the intrinsic grading and we denote by $\Tor^{-i,2j}_{\k[m]}(M,N)$ the corresponding bigraded components.


\begin{theorem}[{\cite[Theorem~4.2.1]{bu-pa00}}]\label{zktor}
There is an isomorphism of $\k$-algebras
\[
  H^*(\zk;\k)\cong\Tor_{\k[v_1,\ldots,v_m]}\bigl(\k[\sk],\k\bigr)
\]
where the $\Tor$ is viewed as a single-graded algebra with respect to the total degree.
\end{theorem}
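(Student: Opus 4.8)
The strategy is to apply the Eilenberg--Moore spectral sequence to the fibration of \hr[Theorem]{cofib}, namely $\zk\to\cpk\to(\C P^\infty)^m$, and then to realise its $E_2$-term together with its collapse by an explicit differential graded algebra model coming from the cell decomposition of \hr[Construction]{celld}.

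First I would record the cohomology of the base and the total space of this fibration. Since $(\C P^\infty)^m=BT^m$ for the $m$-torus $T^m$, we have $H^*\bigl((\C P^\infty)^m;\k\bigr)=\k[m]=\k[v_1,\ldots,v_m]$ with $\deg v_i=2$. For the total space, $\cpk$ is the polyhedral product glued from the subproducts $(\C P^\infty)^I$, $I\in\sk$; a Mayer--Vietoris induction over the simplices of~$\sk$ (equivalently, the identification of $\cpk$ with the Davis--Januszkiewicz space $ET^m\times_{T^m}\zk$) gives $H^*(\cpk;\k)\cong\k[\sk]$, with the inclusion $\cpk\hookrightarrow(\C P^\infty)^m$ inducing the quotient projection $\k[m]\twoheadrightarrow\k[\sk]$. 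Consequently the cohomological Eilenberg--Moore spectral sequence of the fibration takes the form
\[
  E_2=\Tor_{\k[m]}\bigl(\k[\sk],\k\bigr)\Longrightarrow H^*(\zk;\k)
\]
and is multiplicative, which already explains why the answer should be the $\Tor$-algebra.

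To compute $E_2$ explicitly and to settle collapse and the multiplicative structure at once, I would use the Koszul resolution $\bigl(\Lambda[u_1,\ldots,u_m]\otimes\k[m],\,d\bigr)$, $du_i=v_i$, which is a free $\k[m]$-resolution of~$\k$; tensoring over $\k[m]$ with $\k[\sk]$ yields the Koszul differential graded algebra
\[
  \bigl(\Lambda[u_1,\ldots,u_m]\otimes\k[\sk],\,d\bigr),\qquad du_i=v_i,\quad \deg u_i=1,\ \deg v_i=2,
\]
whose cohomology is $\Tor_{\k[m]}(\k[\sk],\k)$ as a bigraded $\k$-algebra. The key point is that this Koszul DGA is isomorphic, as a differential graded algebra, to the cellular cochain complex of~$\zk$ for the cell decomposition of \hr[Construction]{celld}: the monomial $u_Jv_I$ (with $J\cap I=\varnothing$ and $I\in\sk$) is dual to the cell $\varkappa(J,I)$, the Koszul differential $du_i=v_i$ matches the cellular coboundary, and the product on $\Lambda[u_1,\ldots,u_m]\otimes\k[\sk]$ matches the product of cellular cochains induced by a cellular diagonal approximation on the product complex $(D^2)^m$ restricted to~$\zk$. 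Since the cohomology of the cellular cochain complex is $H^*(\zk;\k)$ and this cellular product represents the cup product, we obtain the asserted isomorphism of $\k$-algebras; forgetting the homological grading in favour of the total degree $2j-i$ on $\Tor^{-i,2j}$ gives the single-graded statement, and in particular the spectral sequence collapses at $E_2$ with no multiplicative extensions.

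The main obstacle is precisely this last comparison: verifying that the combinatorial product on the cellular cochains of~$\zk$ --- equivalently, the standard product on the Koszul complex, equivalently the product on the $\Tor$-algebra --- really computes the cup product, rather than merely an associated graded of it. This requires producing a cellular diagonal approximation $(D^2)^m\to(D^2)^m\times(D^2)^m$ compatible with the product cell structure, checking that it restricts to the subcomplex~$\zk$, and checking that the resulting signs are exactly those of the exterior algebra $\Lambda[u_1,\ldots,u_m]$; the shuffle signs $\sign(L,J)$ appearing in \hr[Theorem]{hochster} are a shadow of this bookkeeping. Everything else --- the Mayer--Vietoris computation of $H^*(\cpk)$, the additive identification of the Koszul complex with cellular cochains, and the resulting collapse of the spectral sequence --- is then routine.
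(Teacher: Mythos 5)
This theorem is not proved in the paper at all --- it is imported from~\cite{bu-pa00} --- so your proposal can only be compared with the proofs in the cited sources, and it matches them: the Eilenberg--Moore spectral sequence of the fibration of \hr[Theorem]{cofib}, with $H^*(\cpk;\k)\cong\k[\sk]$ and $H^*((\C P^\infty)^m;\k)\cong\k[m]$, giving $E_2=\Tor_{\k[m]}(\k[\sk],\k)$, is the original argument of~\cite{bu-pa00}, while the multiplicative comparison of the Koszul complex with the cellular cochains of~$\zk$ via a cellular diagonal approximation is exactly the content of~\cite{b-b-p04}, stated in this paper as \hr[Theorem]{zkkos}; you also correctly isolate the genuine technical point (the diagonal approximation on $(D^2)^m$, its restriction to~$\zk$, and the sign bookkeeping). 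One inaccuracy to fix: the Koszul DGA $\Lambda[u_1,\ldots,u_m]\otimes\k[\sk]$ is \emph{not} isomorphic to $\mathcal C^*(\zk)$ --- it contains all monomials $u_J\otimes v^\alpha$, not just the square-free ones, and is instead the cellular cochain complex of $(S^\infty,S^1)^\sk$; to reach $\mathcal C^*(\zk)$ you must first pass to the quotient $R^*(\sk)$ of~\eqref{61} and invoke the acyclicity of the ideal $(v_i^2,u_iv_i)$ (\cite[Lemma~4.2.5]{bu-pa00}), which is the step your outline silently skips. With that inserted, your argument is complete; note also that once the cochain-level DGA comparison is in place the spectral sequence is no longer needed (this is how \cite[\S4.5]{bu-pa15} organises the proof), so the EMSS serves only as motivation for the form of the answer.
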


The $\Tor$-algebra $\Tor_{\k[m]}(\k[\sk],\k)$ can be computed either by resolving the $\k[m]$-module $\k$ and tensoring with $\k[\sk]$, or by resolving the $\k[m]$-module $\k[\sk]$ and tensoring with~$\k$.

For the first approach, there is a standard resolution of the $\k[m]$-module $\k$, the \emph{Koszul resolution}. It is defined as the acyclic differential graded algebra
\[
  \bigl(\Lambda[u_1,\ldots,u_m]\otimes\k[v_1,\ldots,v_m],d_{\k}\bigr),\quad 
  d_{\k} =\sum_i\frac\partial{\partial u_i}\otimes v_i.
\]
Here $\Lambda[u_1,\ldots,u_m]$ denotes the exterior algebra on the generators $u_i$ of cohomological degree $1$, or bidegree $(-1,2)$. After tensoring with $\k[\sk]$ we obtain the \emph{Koszul complex}
$\bigl(\Lambda[u_1,\ldots,u_m]\otimes\k[\sk],d_{\k}\bigr)$, whose cohomology is $\Tor_{\k[m]}(\k[\sk],\k)$.

Furthermore, by~\cite[Lemma~4.2.5]{bu-pa00}, the monomials $v_i^2$ and $u_iv_i$ generate an acyclic ideal in the Koszul complex. The quotient algebra
\begin{equation}\label{61}
  R^*(\sk)=\Lambda[u_1,\ldots,u_m]\otimes\k[\sk]\big/\bigl(v_i^2=u_iv_i=0,\;1\le i\le m\bigr)
\end{equation}
has a finite $\k$-basis of monomials $u_J\otimes v_I$ with $J\subset[m]$, $I\in\sk$ and $J\cap I=\varnothing$. The algebra $R^*(\sk)$ is nothing but the cellular cochain complex of $\zk$ (see~\hr[Construction]{celld}):

\begin{theorem}[\cite{b-b-p04}]\label{zkkos}
There is an isomorphism of cochain complexes
\[
  R^*(\sk)\stackrel\cong\longrightarrow\mathcal C^*(\zk),
  \quad u_J\otimes v_I\mapsto\varkappa(J,I)^*
\]
inducing the cohomology algebra isomorphism of \textup{\hr[Theorem]{zktor}}.
\end{theorem}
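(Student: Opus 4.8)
The plan is to show directly that the $\k$-linear map $u_J\otimes v_I\mapsto\varkappa(J,I)^*$ is an isomorphism of differential graded algebras, and then to check that the map it induces in cohomology is the one of \hr[Theorem]{zktor}. First I would record that the two sides carry matching bases: by~\eqref{61} the algebra $R^*(\sk)$ has $\k$-basis $\{u_J\otimes v_I:J\subset[m],\ I\in\sk,\ J\cap I=\varnothing\}$, while by \hr[Construction]{celld} the complex $\mathcal C^*(\zk)$ has the dual-cell basis $\{\varkappa(J,I)^*\}$ indexed by the same set of pairs. Since $u_J\otimes v_I$ has total degree $|J|+2|I|=\dim\varkappa(J,I)$, the map is a degree-preserving bijection of these bases, hence a graded $\k$-module isomorphism.

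Next I would verify that it is a cochain map. In the cellular chain complex of $D^2$ associated with \hr[Construction]{celld} one has $\partial D=S$ and $\partial S=0$, so the Leibniz rule with Koszul signs for the product cell structure on $(D^2)^m$ gives $\partial\varkappa(J,I)=\sum_{i\in I}(-1)^{\varepsilon(i,J)}\varkappa(J\cup i,I\setminus i)$, where $\varepsilon(i,J)=\#\{j\in J:j<i\}$; this stays inside $\zk$ since $I\setminus i\subset I\in\sk$. Dualising yields $\delta\varkappa(J,I)^*=\sum_{i\in J,\,I\cup i\in\sk}(-1)^{\varepsilon(i,J)}\varkappa(J\setminus i,I\cup i)^*$. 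On the algebraic side $d_{\k}(u_J\otimes v_I)=\sum_{j\in J}(-1)^{\varepsilon(j,J)}u_{J\setminus j}\otimes v_jv_I$, and in $R^*(\sk)$ the monomial $v_jv_I$ equals $v_{\{j\}\cup I}$ if $\{j\}\cup I\in\sk$ and $0$ otherwise. The two expressions coincide term by term, signs included, so the map intertwines $d_{\k}$ and $\delta$.

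The step I expect to be the main obstacle is multiplicativity, because the product on $\mathcal C^*(\zk)$ is a cup product and therefore depends on a choice of cellular diagonal approximation. I would fix the approximation $\Delta\colon D^2\to D^2\times D^2$ determined on chains by $\Delta_*(1)=1\otimes1$, $\Delta_*(S)=S\otimes1+1\otimes S$ and $\Delta_*(D)=D\otimes1+1\otimes D$, check that $\Delta_*$ is a chain map and that $\Delta$ can be realised as a cellular map homotopic to the diagonal, and then take the $m$-fold product of such maps to obtain a cellular diagonal approximation for $(D^2)^m$ whose effect on chains is the tensor product of the factorwise ones. Expanding $\varkappa(J,I)$ through it produces only summands $\varkappa(J_1,I_1)\otimes\varkappa(J_2,I_2)$ with $I_1\sqcup I_2=I\in\sk$, so, as $\sk$ is closed under taking subsets, each factor again lies in $\zk$ and the approximation restricts to a map $\zk\to\zk\times\zk$. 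Consequently the cup product on $\mathcal C^*(\zk)$ is the $m$-fold tensor product of the product induced on $\mathcal C^*(D^2)$, for which $1^*$ is a unit and $S^*\smile S^*=D^*\smile D^*=S^*\smile D^*=0$. Reading this off, $\varkappa(J,I)^*\smile\varkappa(J',I')^*$ equals $\pm\varkappa(J\cup J',I\cup I')^*$ when the sets $J,J',I,I'$ are pairwise disjoint and $I\cup I'\in\sk$, and vanishes otherwise; this is precisely the multiplication rule in $R^*(\sk)=\Lambda[u_1,\dots,u_m]\otimes\k[\sk]/(v_i^2=u_iv_i=0)$. Hence $u_J\otimes v_I\mapsto\varkappa(J,I)^*$ is an isomorphism of differential graded algebras.

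Finally, to identify the induced cohomology isomorphism with the one of \hr[Theorem]{zktor}, I would use that the quotient projection from the Koszul complex $\Lambda[u_1,\dots,u_m]\otimes\k[\sk]$ onto $R^*(\sk)$ kills the ideal generated by the elements $v_i^2$ and $u_iv_i$, which is acyclic by~\cite[Lemma~4.2.5]{bu-pa00}; this projection is therefore a quasi-isomorphism of differential graded algebras, so $H^*(R^*(\sk))\cong\Tor_{\k[v_1,\dots,v_m]}(\k[\sk],\k)$ as algebras. Composing with the isomorphism $R^*(\sk)\xrightarrow{\ \cong\ }\mathcal C^*(\zk)$ constructed above and with $H^*(\mathcal C^*(\zk))=H^*(\zk)$ recovers exactly the ring isomorphism of \hr[Theorem]{zktor}, since that isomorphism is itself obtained through this same chain of identifications of the Koszul complex. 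Apart from this, the only genuine labour is the sign bookkeeping in the cochain-map step and the verification that the chosen diagonal approximation is cellular, restricts to $\zk$, and has vanishing positive-degree cochain products on each $D^2$ factor.
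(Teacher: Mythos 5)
Your proposal is correct and follows essentially the route the paper itself points to: the paper gives no independent proof of Theorem~\ref{zkkos}, remarking only that the cochain-level isomorphism is ``by inspection'' and deferring the multiplicative statement to \cite{b-b-p04} and \cite[\S4.5]{bu-pa15}, and your basis-and-differential check together with the product computation via the factorwise cellular diagonal approximation $\Delta_*(D)=D\otimes 1+1\otimes D$, $\Delta_*(S)=S\otimes 1+1\otimes S$ is precisely the argument carried out there. The one step you defer --- that this chain-level diagonal is realised by an actual cellular map whose $m$-fold product restricts to $\zk$ and is homotopic to the diagonal of $\zk$ through maps to $\zk\times\zk$ --- is exactly the technical content of the cited references, and your sketch of it (and of the identification with the isomorphism of Theorem~\ref{zktor} via the acyclic ideal of~\eqref{61}) is accurate.
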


\begin{remark}
The isomorphism of cochain complexes in the theorem above is by inspection. The result of~\cite{b-b-p04} is that it induces an algebra isomorphism in cohomology. Also, the Koszul complex $\bigl(\Lambda[u_1,\ldots,u_m]\otimes\k[\sk],d_{\k}\bigr)$ itself can be identified with the cellular cochains of the polyhedral product $(S^\infty,S^1)^\sk$; then taking the quotient by the acyclic ideal in~\eqref{61} corresponds to the homotopy equivalence $\zk=(D^2,S^1)^\sk\stackrel\simeq\longrightarrow(S^\infty,S^1)^\sk$. See the details in~\cite[\S4.5]{bu-pa15}.
\end{remark}

In the second approach, $\Tor_{\k[m]}(\k[\sk],\k)$ is computed by resolving the $\k[m]$-module $\k[\sk]$ and tensoring with~$\k$. The \emph{minimal resolution} has a disadvantage of not supporting a multiplicative structure. There is a nice non-minimal resolution, constructed in the 1966 PhD thesis of Diana Taylor. It has a natural multiplicative structure inducing the algebra isomorphism of~\hr[Theorem]{zktor}. This \emph{Taylor resolution} of $\k[\sk]$ is defined in terms of the missing faces of $\sk$ and is therefore convenient for calculations with higher Whitehead products. We describe the resolution and its coalgebraic version next.

\begin{construction}[Taylor resolution]\label{tares}
Given a monomial ideal $(\m_1, \ldots, \m_t)$ in the polynomial algebra $\k[m]$, we define a free resolution of the $\k[m]$-module $\k[m]/(\m_1, \ldots, \m_t)$. 

For each $s = 0, \ldots, t$, let $F_s$ be a free $\k[m]$-module of rank $\binom m s$ with basis $\{e_J\}$ indexed by subsets $J\subset \{1,\ldots, t\}$ of cardinality $s$. Define a morphism $d\colon F_s \to F_{s-1}$ by
\[
  d(e_J) = \sum\limits_{j\in J}\sign (j, J) \frac{\m_J}{\m_{J\setminus j}}e_{J\setminus j},
\]
where $\m_J = \lcm_{j\in J}(\m_j)$ and $\sign (j, J)=(-1)^{n-1}$ if $j$ is the $n$-th element in the ordered set~$J$. It can be verified that $d^2 = 0$. We therefore obtain a complex
\[
  T(\m_1, \dots, \m_t) :\; 0 \to F_t \to F_{t-1} \to \dots \to F_1 \to F_0 \to 0.
\]
By the theorem of D.~Taylor, $T(\m_1, \dots, \m_t)$ is a free resolution of the $\k[m]$-module $\k[m]/(\m_1, \dots, \m_t)$. For the convenience of the reader, we include the proof of this result in the Appendix as \hr[Theorem]{taylor}.
\end{construction}

Next we describe the dualisation of the constructions above in the coalgebraic setting. The dual of $\k[v_1,\ldots,v_m]$ is the symmetric coalgebra, which we denote by $\k\langle x_1, \dots, x_m\rangle$ or~$\k\langle m\rangle$. It has a $\k$-basis consisting of monomials $\m$, with the comultiplication defined by the formula
\begin{equation}\label{comul}
	\Delta\m = \sum\limits_{\m'\cdot\m''=\m} \m'\otimes \m''.
\end{equation}
Given a set of monomials $\m_1,\ldots,\m_t$ in the variables $x_1,\ldots,x_m$, we define a subcoalgebra $C(\m_1,\ldots,\m_t)\subset\k\langle x_1, \dots, x_m\rangle$ with a $\k$-basis of monomials $\m$ that are not divisible by any of the $\m_i$, $i=1,\ldots,t$. The \emph{face coalgebra} of a simplicial complex $\sk$ is defined as
\[
  \k\langle\sk\rangle=C\bigl(x_J\ |\ J\in\mathop\mathrm{MF}(\sk)\bigr).
\]
The coalgebra $\k\langle\sk\rangle$ has a $\k$-basis of monomials $\m$ whose support is a face of~$\sk$, with the comultiplication given by~\eqref{comul}. 

Let $\Lambda$ be a coalgebra, let $A$ be a right $\Lambda$-comodule with the structure morphism $\nabla_A\colon A\to A\otimes\Lambda$, and let $B$ be a left $\Lambda$-comodule with the structure morphism $\nabla_B\colon B\to\Lambda\otimes B$. The \emph{cotensor product} of $A$ and $B$ is defined as the $\k$-comodule
\[
  A\boxtimes_{\Lambda}\!B=\ker\bigl(\nabla_A\otimes\id_B - \id_A\otimes\nabla_B\colon A\otimes B 
  \to A\otimes\Lambda\otimes B\bigr).
\]
When $\Lambda$ is cocommutative, $A\boxtimes_{\Lambda}\!B$ is a $\Lambda$-comodule.

The $n$-th derived functor of ${}\cdot\boxtimes_{\Lambda}\!B$ is denoted by $\Cotor^n_{\Lambda}(A,B)$ or $\Cotor_{-n}^{\Lambda}(A,B)$. Namely, given an injective resolution $A \to I^{\bullet}$ with the resolvents indexed by nonnegative integers, we have
\[
  \Cotor_{-n}^{\Lambda}(A,B)=\Cotor^n_{\Lambda}(A,B)=H^{n}(I^\bullet\boxtimes_{\Lambda}\!B).
\] 
If $B\to J^\bullet$ is an injective resolution of $B$, then the standard argument using a bicomplex gives isomorphisms
\begin{equation}\label{IJres}
	\Cotor^{n}_{\Lambda}(A, B) =H^{n}(I^{\bullet}\boxtimes_{\Lambda}\! B) 
  \cong H^{n}(I^{\bullet}\boxtimes_{\Lambda}\! J^{\bullet}) \cong 
  H^{n}(A\boxtimes_{\Lambda}\! J^{\bullet}).
\end{equation}

The isomorphism 
$H^{n}(I^{\bullet}\boxtimes_{\Lambda}\! B) \stackrel\cong\longrightarrow
H^{n}(A\boxtimes_{\Lambda}\! J^{\bullet})$ can be described explicitly as follows.

\begin{construction}\label{koszul_to_taylor}
Let $\eta \in H^{n}(I^{\bullet}\boxtimes_{\Lambda}\! B)$ be a homology class represented by a cycle $\eta^{(0)}\in I^n\boxtimes_{\Lambda}\! B$. We describe how to construct a cycle 
$\eta^{(n+1)}\in A\boxtimes_{\Lambda}\!J^n$ representing the same homology class in 
$\Cotor^{n}_{\Lambda}(A, B)$. 	Consider the bicomplex
\begin{center}
		\begin{tikzcd}[column sep=3.3em]
		A\boxtimes_{\Lambda}\! B \arrow{rrr}\arrow{ddd} &&& I^0\boxtimes_{\Lambda}\! B \arrow{r}\arrow{ddd} & \dots \arrow{rrr} &&& I^n\boxtimes_{\Lambda}\!B \arrow{ddd}[labl]{{ \eta^{(0)}\mapsto \partial_B(\eta^{(0)})}}
		\\\\\\
		A\boxtimes_{\Lambda}\!J^0 \arrow{rrr}\arrow{d} &&& I^0\boxtimes_{\Lambda}\! J^0 \arrow{r}\arrow{d} & \dots \arrow{rrr}{\eta^{(1)}\mapsto \partial_A(\eta^{(1)})=\partial_B(\eta^{(0)})} &&& I^n\boxtimes_{\Lambda}\! J^0 \arrow{d}
		\\
		\vdots \arrow{ddd} &&& \vdots \arrow{ddd}[labl]{{ \eta^{(n)}\mapsto \partial_B(\eta^{(n)})}}& \ddots &&& \vdots
		\\\\\\
		A\boxtimes_{\Lambda}\!J^n \arrow{rrr}{\eta^{(n+1)}\mapsto \partial_A(\eta^{(n+1)})=\partial_B(\eta^{(n)})}&&& I^0\boxtimes_{\Lambda}\! J^n \arrow{r} & \dots 
		\end{tikzcd}
\end{center}
The rows and columns are exact by the injectivity of the comodules $I^m$ and $J^l$. We have $\partial_A\big(\partial_B\eta^{(0)}\big)=-\partial_B\big(\partial_A\eta^{(0)}\big)=0$. Hence, there exists $\eta^{(1)}\in I^{n-1}\boxtimes_{\Lambda}\! J^0$ such that $\partial_A\eta^{(1)} = \partial_B\eta^{(0)}$. Similarly, there exists $\eta^{(2)}\in I^{n-2}\boxtimes_{\Lambda}\! J^1$ such that $\partial_A\eta^{(2)} = \partial_B\eta^{(1)}$. Proceeding in this fashion, we arrive at an element $\eta^{(n+1)}\in A\boxtimes_{\Lambda}\!J^n$, which represents $\eta$ by construction.
\end{construction}

We apply this construction in the following setting.  Here is the dual version of \hr[Theorem]{zktor}:

\begin{theorem}\label{zkcotor}
There is an isomorphism of $\k$-coalgebras
\[
  H_*(\zk;\k)\cong\Cotor^{\k\langle x_1,\ldots,x_m\rangle}\bigl(\k\langle\sk\rangle,\k\bigr).
\]
\end{theorem}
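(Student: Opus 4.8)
The plan is to dualise \hr[Theorem]{zktor} term by term, using the identification of the Koszul complex with the cellular cochains of $\zk$ from \hr[Theorem]{zkkos} and a matching coalgebraic (co-Koszul) complex on the homology side. First I would construct the \emph{co-Koszul complex}: the $\k$-linear dual of the Koszul resolution $\bigl(\Lambda[u_1,\ldots,u_m]\otimes\k[m],d_\k\bigr)$ is a complex of injective $\k\langle m\rangle$-comodules resolving~$\k$ (the dual of the augmentation $\k[m]\to\k$), namely $\bigl(\Gamma[t_1,\ldots,t_m]\otimes\k\langle m\rangle,\partial\bigr)$, where $\Gamma$ is the divided-power coalgebra dual to the exterior algebra and $\partial$ is the dual of $d_\k$. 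Cotensoring this injective resolution of $\k$ over $\k\langle m\rangle$ with $\k\langle\sk\rangle$ and invoking~\eqref{IJres} computes $\Cotor^{\k\langle m\rangle}(\k\langle\sk\rangle,\k)$ as the cohomology of the resulting complex. The second step is to observe that this cotensor product, after discarding an acyclic part dual to the acyclic ideal $(v_i^2=u_iv_i=0)$ of~\eqref{61}, has a finite $\k$-basis of elements $t_J\otimes x_I$ with $J\subset[m]$, $I\in\sk$, $J\cap I=\varnothing$; this is by construction the $\k$-linear dual of the cochain complex $R^*(\sk)$, hence — by \hr[Theorem]{zkkos} — isomorphic to the cellular \emph{chain} complex $\mathcal C_*(\zk)$ of \hr[Construction]{celld}, with $t_J\otimes x_I\mapsto\varkappa(J,I)$ and the differential matching.

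The third step is to upgrade the resulting linear isomorphism $H_*(\zk;\k)\cong\Cotor^{\k\langle m\rangle}(\k\langle\sk\rangle,\k)$ to an isomorphism of $\k$-\emph{coalgebras}. The coalgebra structure on $\Cotor$ comes from the diagonal $\k\langle\sk\rangle\to\k\langle\sk\rangle\otimes\k\langle\sk\rangle$ together with the coalgebra structure on the co-Koszul resolution (which is the $\k$-dual of the shuffle/Koszul algebra structure used in \hr[Theorem]{zktor}), while the coalgebra structure on $H_*(\zk;\k)$ is dual to the cup product on $H^*(\zk;\k)$. Since \hr[Theorem]{zktor} (via \cite{bu-pa00}) and \hr[Theorem]{zkkos} (via \cite{b-b-p04}) already assert that the corresponding statement holds at the level of \emph{algebras} — the algebra isomorphism $H^*(\zk;\k)\cong\Tor_{\k[m]}(\k[\sk],\k)$ is realised by the cochain-level map $R^*(\sk)\to\mathcal C^*(\zk)$ — it suffices to dualise: applying $\mathrm{Hom}_\k(-,\k)$ to a multiplicative cochain quasi-isomorphism over a field (or, over a general $\k$, by a universal-coefficient argument together with the fact that these complexes are degreewise free of finite rank) yields a comultiplicative chain quasi-isomorphism, and passing to (co)homology gives the claimed coalgebra isomorphism.

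The main obstacle I anticipate is bookkeeping rather than conceptual: over a general commutative ring~$\k$ (the paper does not assume $\k$ is a field) one must be careful that $\Cotor$ is the correct derived functor, that the co-Koszul complex really is a complex of \emph{injective} $\k\langle m\rangle$-comodules (injectivity of cofree comodules and the behaviour of $\boxtimes$ under base change need to be checked), and that dualising the Taylor/Koszul \emph{algebra} structure genuinely produces a \emph{coalgebra} structure on the resolution compatible with~\eqref{comul} — the subtlety being the passage between exterior and divided-power generators, where signs and divided powers $t_i^{(k)}$ appear. A cleaner route, which I would present if the direct dualisation becomes cumbersome, is to cite \hr[Theorem]{zkkos}: the cellular chain complex $\mathcal C_*(\zk)$ is already the $\k$-dual of $R^*(\sk)$, so one simply \emph{defines} the co-Koszul resolution to be the $\k$-dual of~\eqref{61} and the co-Taylor resolution to be the $\k$-dual of \hr[Construction]{tares}, and the theorem becomes the statement that this dual complex computes $\Cotor$, which follows from \hr[Theorem]{zktor} by the finite-type universal coefficient theorem. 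Either way, no genuinely new homotopy-theoretic input is needed beyond Theorems~\ref{zktor} and~\ref{zkkos}.
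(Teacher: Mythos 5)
Your proposal is correct and coincides with the paper's (implicit) argument: the paper offers no separate proof of \hr[Theorem]{zkcotor}, presenting it simply as the dual version of \hr[Theorem]{zktor}, realised at the chain level by the coalgebraic Koszul complex and \hr[Theorem]{coKos} — precisely the dualisation of \hr[Theorems]{zktor} and~\ref{zkkos} that you carry out, including the finite-rank/freeness bookkeeping. The only cosmetic remark is that the dual of $\Lambda[u_1,\ldots,u_m]$ on degree-one generators is just the exterior coalgebra $\Lambda\langle y_1,\ldots,y_m\rangle$ appearing in the paper rather than a divided-power coalgebra, which does not affect your argument.
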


The coalgebra $\Cotor^{\k\langle m\rangle}(\k\langle\sk\rangle,\k)$ can be computed using the dual version of the Koszul resolution.

\begin{construction}[Koszul complex of the face coalgebra]
The \emph{Koszul resolution} for the $\k\langle m\rangle$-comodule $\k$ is defined as the acyclic differential graded coalgebra
\[
  \bigl(\k\langle x_1,\ldots,x_m\rangle\otimes\Lambda\langle y_1,\ldots,y_m\rangle,\partial_{\k}\bigr),
  \quad \partial_{\k}=\sum_i\frac\partial{\partial x_i}\otimes y_i.
\]
After cotensoring with $\k\langle\sk\rangle$ we obtain the \emph{Koszul complex}
$\bigl(\k\langle\sk\rangle\otimes\Lambda\langle y_1,\ldots,y_m\rangle,\partial_{\k}\bigr)$, whose homology is $\Cotor^{\k\langle m\rangle}(\k\langle\sk\rangle,\k)$.
\end{construction}

The relationship between the cellular chain complex of $\zk$ and the Koszul complex of $\k\langle\sk\rangle$ is described by the following dualisation of \hr[Theorem]{zkkos}.

\begin{theorem}\label{coKos}
There is an inclusion of chain complexes
\[
  \mathcal C_*(\zk)\to \bigl(\k\langle\sk\rangle\otimes
  \Lambda\langle y_1,\ldots,y_m\rangle,\partial_{\k}\bigr),\quad 
  \varkappa(J,I)\mapsto x_I\otimes y_J
\]
inducing an isomorphism in homology:
\[
  H_*(\zk;\k)\cong H\bigl(\k\langle\sk\rangle\otimes
  \Lambda\langle y_1,\ldots,y_m\rangle,\partial_{\k}\bigr)=\Cotor^{\k\langle x_1,\ldots,x_m\rangle}\bigl(\k\langle\sk\rangle,\k\bigr).
\]
\end{theorem}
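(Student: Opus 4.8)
The plan is to deduce this from \hr[Theorem]{zkkos} and the acyclicity of the ideal $(v_i^2,u_iv_i)$ used there, by passing to $\k$-linear duals.

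First I would check that $\varkappa(J,I)\mapsto x_I\otimes y_J$ is a morphism of chain complexes. Injectivity and compatibility with the gradings are immediate, since $\varkappa(J,I)$ has cellular dimension $2|I|+|J|$ and $x_I\otimes y_J$ has homological degree $2|I|+|J|$. For the chain-map property one computes the cellular boundary in the decomposition of \hr[Construction]{celld}: in $D^2$ one has $\partial D=S$ and $\partial S=0$, so the Leibniz rule gives $\partial\varkappa(J,I)=\sum_{i\in I}\pm\,\varkappa(J\cup i,I\setminus i)$ with shuffle signs; on the algebraic side $\partial_{\k}(x_I\otimes y_J)=\sum_{i\in I}\frac{\partial x_I}{\partial x_i}\otimes y_iy_J=\sum_{i\in I}\pm\,x_{I\setminus i}\otimes y_{J\cup i}$, and the two sign patterns agree. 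This is exactly the dual of the ``by inspection'' verification in \hr[Theorem]{zkkos}.

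Next I would observe that the image of $\iota$ is the subcomplex $R_*(\sk)\subset\k\langle\sk\rangle\otimes\Lambda\langle y_1,\ldots,y_m\rangle$ spanned by the monomials $x_I\otimes y_J$ with $I\in\sk$ and $I\cap J=\varnothing$; the boundary formula above shows this is indeed a subcomplex. Under the pairing $u_J\otimes v_I\leftrightarrow x_I\otimes y_J$ of monomial bases, $R_*(\sk)$ is the $\k$-linear dual of the cochain complex $R^*(\sk)$ of~\eqref{61}, and $\k\langle\sk\rangle\otimes\Lambda\langle y_1,\ldots,y_m\rangle$ is the $\k$-linear dual of the Koszul complex $\Lambda[u_1,\ldots,u_m]\otimes\k[\sk]$; moreover the differential $\partial_{\k}$ is dual to $d_{\k}$. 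With this identification, $\iota$ is the composite of the isomorphism $\mathcal C_*(\zk)\xrightarrow{\cong}R_*(\sk)$ dual to \hr[Theorem]{zkkos} with the inclusion $R_*(\sk)\hookrightarrow\k\langle\sk\rangle\otimes\Lambda\langle y_1,\ldots,y_m\rangle$, and the latter inclusion is the $\k$-dual of the quotient projection $\Lambda[u_1,\ldots,u_m]\otimes\k[\sk]\twoheadrightarrow R^*(\sk)$.

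Finally, the kernel of that projection is the ideal generated by the $v_i^2$ and the $u_iv_i$, which is acyclic by~\cite[Lemma~4.2.5]{bu-pa00}; being a complex of free $\k$-modules that is bounded and of finite rank in each internal degree, it is in fact contractible, hence its $\k$-dual is acyclic as well. The short exact sequence of chain complexes obtained by dualising $0\to(v_i^2,u_iv_i)\to\Lambda[u_1,\ldots,u_m]\otimes\k[\sk]\to R^*(\sk)\to0$ then shows that $R_*(\sk)\hookrightarrow\k\langle\sk\rangle\otimes\Lambda\langle y_1,\ldots,y_m\rangle$, and therefore $\iota$, is a quasi-isomorphism. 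Combining this with $H_*(\mathcal C_*(\zk))=H_*(\zk;\k)$ and with the identification of the homology of the Koszul complex with $\Cotor^{\k\langle x_1,\ldots,x_m\rangle}(\k\langle\sk\rangle,\k)$ noted in the construction of that complex gives the asserted isomorphism. The real content is thus imported via duality from \cite[Lemma~4.2.5]{bu-pa00}; the points needing care are the sign bookkeeping in the chain-map check and the verification that, under the monomial-basis pairing, the dual of the quotient differential on $R^*(\sk)$ is precisely the restriction of $\partial_{\k}$ to $R_*(\sk)$.
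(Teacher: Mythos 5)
Your proposal is correct and follows essentially the same route as the paper, which states \hr[Theorem]{coKos} precisely as the dualisation of \hr[Theorem]{zkkos} and of the acyclicity of the ideal $(v_i^2,u_iv_i)$ from~\cite[Lemma~4.2.5]{bu-pa00}, leaving the details implicit. Your explicit justification that the acyclic ideal is degreewise a bounded exact complex of finite-rank free $\k$-modules, hence contractible, so that acyclicity survives $\k$-dualisation, is exactly the point that makes this dualisation legitimate over an arbitrary coefficient ring.
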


On the other hand, $\Cotor^{\k\langle m\rangle}(\k\langle\sk\rangle,\k)$ can be computed using the dual version of the Taylor resolution for the $\k\langle m\rangle$-comodule $\k\langle\sk\rangle$.

\begin{construction}[Taylor resolution for comodules]\label{tacor}
Given a set of monomials $\m_1, \ldots, \m_t$, we describe a cofree resolution of the $\k\langle m\rangle$-comodule $C(\m_1, \ldots, \m_t)$. 

For each $s = 0, \ldots, t$, let $I^s$ be a cofree $\k\langle m\rangle$-comodule of rank $\binom m s$ with basis $\{e^J\}$ indexed by subsets $J\subset \{1,\ldots, t\}$ of cardinality~$s$. The differential $\partial\colon I^s \to I^{s+1}$ is defined
by
\[
  \partial(x_1^{\alpha_1}\cdots x_m^{\alpha_m}e^J)
  =\sum\limits_{j\notin J}\sign(j,J)
  \frac{x_1^{\alpha_1}\cdots x_{\vphantom{1} m}^{\alpha_m}\m_J}{\m_{J\cup\{j\}}}e^{J\cup\{j\}}.
\]
Here we assume that $\frac{x_1^{\alpha_1}\cdots x_{\vphantom{1}m}^{\alpha_{\vphantom{1}m}}\m_J}{\m_{J\cup\{j\}}}$ is zero if it is not a monomial. The resulting complex
\[
  T'(\m_1, \ldots, \m_t) :\;0\to I^0\to I^1 \to \cdots \to I^t\to 0
\]
is called the \emph{Taylor resolution} of the $\k\langle m\rangle$-comodule $C(\m_1, \ldots, \m_t)$. The proof that it is indeed a resolution is given in~\hr[Theorem]{taylor}.
\end{construction}

\begin{construction}[Taylor complex of the face coalgebra]
Let  $\k\langle\sk\rangle=C\bigl(x_J\ |\ J\in\mathop\mathrm{MF}(\sk)\bigr)$ be the face coalgebra of a simplicial complex~$\sk$. In this case it is convenient to view the $s$-th term $I^s$ in the Taylor resolution as the cofree $\k\langle m\rangle$-comodule with basis consisting of exterior monomials
$w_{J_1}\wedge\cdots\wedge w_{J_s}$, where $J_1,\ldots,J_s$ are different missing faces of~$\sk$. The differential then takes the form
\[
  \partial_{\k\langle\sk\rangle}(x_1^{\alpha_1}\cdots x_{\vphantom{1} m}^{\alpha_m}\cdot 
  w_{J_1}\wedge\dots\wedge   w_{J_s})= \sum\limits_{J \neq J_1, \ldots, J_s}   
  \frac{x_1^{\alpha_1}\cdots x_{\vphantom{1} m}^{\alpha_m}}
  {x_{(J_1\cup\dots\cup J_s\cup J)\setminus (J_1\cup\dots\cup J_s)}}
  \cdot w_J\wedge w_{J_1}\wedge\dots\wedge w_{J_s}
\]
(the sum is taken over missing faces $J\in\mathop\mathrm{MF}(\sK)$ different from $J_1,\ldots,J_s$). 

After cotensoring with $\k$ over $\k\langle m\rangle$ we obtain the \emph{Taylor complex} of $\k\langle\sk\rangle$ calculating $\Cotor^{\k\langle x_1,\ldots,x_m\rangle}\bigl(\k\langle\sk\rangle,\k\bigr)$. Its $(-s)$th graded component is a free $\k$-module with basis of exterior monomials
$w_{J_1}\wedge\cdots\wedge w_{J_s}$, where $J_1,\ldots,J_s$ are different missing faces of~$\sk$. The differential is given by
\[
  \partial_{\k\langle\sk\rangle}(w_{J_1}\wedge\dots\wedge   w_{J_s})= \sum\limits_{J\subset J_1\cup\dots\cup J_s}
  w_J\wedge w_{J_1}\wedge\dots\wedge w_{J_s}
\]
(the sum is over missing faces $J\subset J_1\cup\dots\cup J_s$ different from any of the $J_1,\ldots,J_s$).
\end{construction}

We therefore have two methods of calculating $H_*(\zk)=\Cotor^{\k\langle x_1,\ldots,x_m\rangle}\bigl(\k\langle\sk\rangle,\k\bigr)$: by resolving $\k$ (Koszul resolution) or by resolving $\k\langle\sK\rangle$ (Taylor resolution). The two resulting complexes are related by the chain of quasi-isomorphisms~\eqref{IJres} and \hr[Construction]{koszul_to_taylor}.

\begin{example} Let $\sk$ be the substitution complex $\partial\Delta\big(\partial\Delta(1,2,3), 4,5\big)$, see \hr[Figure]{12345}.	
After tensoring the Taylor resolution for $\Z\langle\sk\rangle$ with $\Z$ we obtain the following complex:
	\begin{center}
		\begin{tikzpicture}[every node/.style={midway}]
		\matrix[column sep=1.3cm,row sep={4cm,between origins}] at (0, 0) {
			\node(A) {$\Z$};   &
			\node(B) {$\Z^4$}; &
			\node(C) {$\Z^6$}; &&&&&&&
			\node(D) {$\Z^4$}; \\
			& \node(E) {$\Z$}; &&&&&&&& \node(G){};\\};
		\draw[->] (A) -- (B) node[below] {\small $1\mapsto 0$};
		\draw[->] (B) -- (C) node[below=0.1em]
		{\small $\begin{aligned}
			&w_{123}\mapsto 0\\
			&w_{145}\mapsto 0\\
			&w_{245}\mapsto 0\\
			&w_{345}\mapsto 0\\
			\end{aligned}$};
		
		\draw[->] (C) -- (D) node[below=0.1em]
		{\small $\begin{aligned}
			&w_{123}\wedge w_{145}\mapsto \phantom{-} w_{123}\wedge w_{145}\wedge w_{245} + w_{123}\wedge w_{145}\wedge w_{345}\\
			&w_{123}\wedge w_{245}\mapsto  - w_{123}\wedge w_{145}\wedge w_{245}+  
       w_{123}\wedge w_{245}\wedge w_{345} \\
			&w_{123}\wedge w_{345}\mapsto 
			-w_{123}\wedge w_{145}\wedge w_{345} - w_{123}\wedge w_{245}\wedge w_{345}\\
			&w_{145}\wedge w_{245}\mapsto 0\\
			&w_{145}\wedge w_{345}\mapsto 0\\
			&w_{245}\wedge w_{345}\mapsto 0
			\end{aligned}
			$};
		
		\draw[->,rounded corners] (D.south) -- +(0,-1) |- 
		node[below=0.1em, pos=.8]
		{\small $\begin{aligned}
			-w_{123}\wedge w_{145}\wedge w_{245}\wedge w_{345}\mapsfrom w_{123}\wedge w_{145}\wedge w_{245}&\\
			\phantom{-}w_{123}\wedge w_{145}\wedge w_{245}\wedge w_{345}\mapsfrom w_{123}\wedge w_{145}\wedge w_{345}&\\
			-w_{123}\mapsfrom w_{145}\wedge w_{245}\wedge w_{345}\mapsfrom w_{123}\wedge w_{245}\wedge w_{345}&\\
			\phantom{-}w_{123}\wedge w_{145}\wedge w_{245}\wedge w_{345}\mapsfrom w_{145}\wedge w_{245}\wedge w_{345}&
			\end{aligned}	
			$} (E);
		\end{tikzpicture}
	\end{center}
We see that homology of this complex agrees with homology of the wedge $(S^5)^{\vee 4}\vee(S^6)^{\vee3}\vee S^7\vee S^8$, in accordance with \hr[Example]{exreal}. 
\end{example}

\section{Higher Whitehead products and Taylor resolution}
Given an iterated higher Whitehead product $w$, \hr[Lemma]{hurewicz_image} gives a canonical cellular cycle representing the Hurewicz image of~$w$. By \hr[Theorem]{coKos}, this cellular cycle can be viewed as a cycle in the Koszul complex calculating $\Cotor^{\k\langle m\rangle}\bigl(\k\langle\sk\rangle,\k\bigr)$. Here we use \hr[Construction]{koszul_to_taylor} to describe a canonical cycle representing an iterated higher Whitehead product $w$ in the coalgebraic Taylor resolution. This gives a new criterion for the realisability of~$w$.

\begin{theorem}\label{tarep}
Let $w$ be a nested iterated higher Whitehead product
\begin{equation}\label{taylor_whitehead_product}
  w = \Big[\big[\dots\big[[\mu_{i_{11}},\ldots, \mu_{i_{1p_1}}], \mu_{i_{21}},\dots, \mu_{i_{2p_2}}\big],  
  \dots\big], \mu_{i_{n1}},\dots, \mu_{i_{np_n}}\Big].	
\end{equation}
	Then the Hurewicz image $h(w) \in H_*(\zk) = \mathop\mathrm{Cotor}\nolimits^{\Z\langle m\rangle}( \Z\langle\sk\rangle, \Z)$ is represented by the following cycle in the Taylor complex of ${\Z\langle\sk\rangle}$
\begin{equation}\label{taylor_cycle}
  \bigwedge\limits_{k=1}^n\hspace{3mm}\Biggl(\hspace{1mm}\sum\limits_{\substack{J\in   
  \mathop\mathrm{MF}(\sk) \\ J\setminus
  \bigl(\bigcup\limits_{j=1}^{n-k} I_j\bigr)   =I_{n-k+1} }}
\hspace{-5mm}w_J\hspace{1mm}\Biggr),
\end{equation}
where $I_k = \{i_{k1}, \dots, i_{kp_k}\}$. 
\end{theorem}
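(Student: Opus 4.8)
The plan is to transport the known Koszul-complex representative of $h(w)$ to the Taylor complex along the explicit comparison of \hr[Construction]{koszul_to_taylor}, and to compute the result by induction on the nesting depth~$n$. First I would recall that, by \hr[Theorem]{coKos}, the canonical cellular cycle $h_c(w)$ of \hr[Lemma]{hurewicz_image_old} is a cycle in the Koszul complex $\bigl(\k\langle\sk\rangle\otimes\Lambda\langle y_1,\ldots,y_m\rangle,\partial_{\k}\bigr)=\k\langle\sk\rangle\boxtimes_{\k\langle m\rangle}J^\bullet$ computing $\Cotor^{\k\langle m\rangle}(\k\langle\sk\rangle,\k)$, where $J^\bullet$ is the co-Koszul resolution of~$\k$. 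Since $h_c(w)$ involves exactly one $S$-variable per bracket, it sits in homological degree~$n$. Running the bicomplex comparison of \hr[Construction]{koszul_to_taylor} in the direction that trades Koszul degree for Taylor degree (the zig-zag works either way), one lifts $h_c(w)$ step by step through the bicomplex $I^\bullet\boxtimes_{\k\langle m\rangle}J^\bullet$, with $I^\bullet$ the co-Taylor resolution of $\k\langle\sk\rangle$, to a cycle in the Taylor complex $I^\bullet\boxtimes_{\k\langle m\rangle}\k$ representing the same $\Cotor$ class; the claim is that this cycle is \eqref{taylor_cycle}.

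For the base case $n=1$ the product $w=[\mu_{i_{11}},\ldots,\mu_{i_{1p_1}}]$ is single; $h_c(w)$ corresponds under \hr[Theorem]{hochster} to the fundamental cycle of $\partial\Delta(I_1)$ inside $\sk_{I_1}$, while on the Taylor side the only monomial of internal support $I_1$ is $w_{I_1}$, which is a cycle because $I_1$, being a missing face, is a minimal non-face, so no proper subset of $I_1$ lies in $\mathop\mathrm{MF}(\sk)$. This is exactly \eqref{taylor_cycle} for $n=1$. For the inductive step I would write $w=[w_1,\mu_{i_{n1}},\ldots,\mu_{i_{np_n}}]$ with $w_1$ nested of depth $n-1$, so that \hr[Lemma]{hurewicz_image} gives $h_c(w)=h_c(w_1)\bigl(\sum_{k=1}^{p_n}D_{i_{n1}}\cdots S_{i_{nk}}\cdots D_{i_{np_n}}\bigr)$. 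One step of the comparison trades the single $S$-factor coming from the outer bracket for a new exterior factor $\sum_{J}w_J$: the division by $\m_{J\cup\{j\}}$ in the co-Taylor differential (\hr[Construction]{tacor}) forces $J\in\mathop\mathrm{MF}(\sk)$ to satisfy $J\setminus(I_1\cup\cdots\cup I_{n-1})=I_n$, which is precisely the $k=1$ summand of \eqref{taylor_cycle}; the remaining $n-1$ steps reproduce, by the inductive hypothesis applied to $h_c(w_1)$, the wedge of the remaining factors, and concatenating gives \eqref{taylor_cycle} for~$w$.

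The main obstacle is carrying out this single comparison step precisely. One must make explicit, compatible choices of the intermediate elements $\eta^{(i)}$ in the bicomplex — these are not canonical — and verify both that the combinatorics of the co-Taylor differential selects exactly the missing faces $J$ with $J\setminus(I_1\cup\cdots\cup I_{n-1})=I_n$, and that the three sources of signs (the $\sign(j,J)$ in the co-Taylor differential, the $\sign(j,J)$ in the co-Koszul differential, and the shuffle signs $\sign(L,J)$ in \hr[Theorem]{hochster}) cancel so that \eqref{taylor_cycle} indeed carries no signs with the stated ordering of the wedge factors. A cleaner but less self-contained alternative is to verify directly that \eqref{taylor_cycle} is a cycle in the Taylor complex (a combinatorial cancellation identity among missing faces, as in the example following \hr[Theorem]{coKos}), that it lies in the internal-degree component indexed by $I_1\sqcup\cdots\sqcup I_n$, which computes $\widetilde H_*(\sk_{I_1\sqcup\cdots\sqcup I_n})$, and that its class coincides with that of the top sphere $\partial\Delta^{\rm sph}_w$ — the very class $h_c(w)$ represents on the Koszul side, as in the proof of \hr[Theorem]{realisation} — whence agreement follows from \hr[Theorem]{hochster}; with this route the difficulty migrates to the identification of that homology class.
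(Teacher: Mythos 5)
Your overall strategy coincides with the paper's: regard the canonical cellular chain $h_c(w)$ as a cycle in the Koszul complex via \hr[Theorem]{coKos} and transport it to the Taylor complex through the bicomplex comparison of \hr[Construction]{koszul_to_taylor}. The paper carries out exactly this zigzag, but in the opposite order to yours: at the $k$-th step it lifts by replacing the factor $\sum_{j}D_{i_{k1}}\cdots S_{i_{kj}}\cdots D_{i_{kp_k}}$ coming from the \emph{innermost} remaining bracket by $\varkappa(\varnothing,I_k)=D_{i_{k1}}\cdots D_{i_{kp_k}}$ and then applies $\partial_{\Z\langle\sk\rangle}$, so the wedge \eqref{taylor_cycle} is assembled from the rightmost factor $w_{I_1}$ outward; at each such step the contributing missing faces $J$ are those with $J\setminus(I_1\cup\dots\cup I_{k-1})=I_k$, and the co-Taylor differential divides the monomial part of every term by the \emph{same} monomial $x_{I_k}$, so the intermediate element remains a clean product of the untouched cellular factors with the accumulated wedge factors.

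This order is not a cosmetic choice, and it is where your induction has a genuine gap. Peeling the \emph{outermost} bracket first, your lift is $h_c(w_1)\cdot D_{i_{n1}}\cdots D_{i_{np_n}}$, and when you apply the co-Taylor differential, divisibility of the monomial part only forces $J\setminus(I_1\cup\dots\cup I_{n-1})\subset I_n$ for each term, not the equality you assert. Worse, since the sub-bracket $[\mu_{i_{n1}},\dots,\mu_{i_{np_n}}]$ must vanish for $w$ to be defined, $\Delta(I_n)$ is a face of $\sk$ by \hr[Proposition]{singlent}, so $I_n$ itself is never a missing face; hence every contributing $J$ has $J\setminus I_n\neq\varnothing$, and the differential divides the $x$-part of each term by $x_{J\setminus I_n}$, which depends on $J$. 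The element after your first exchange is therefore \emph{not} of the form $h_c(w_1)\wedge\bigl(\sum_J w_J\bigr)$: the coefficient of each $w_J$ is a $J$-dependent truncation of $h_c(w_1)$ (only the terms whose $D$-part contains $J\setminus I_n$ survive, with those variables removed). Consequently the inductive hypothesis cannot be applied verbatim to ``the remaining $n-1$ steps''; in examples the later pushes do terminate at \eqref{taylor_cycle}, but only after cross terms carrying leftover $x$-variables cancel by antisymmetry of the wedge, and your plan provides no argument for this. Your fallback route (checking directly that \eqref{taylor_cycle} is a cycle and identifying its class through \hr[Theorem]{hochster}) would sidestep the issue, but you leave precisely that identification unproved, so it does not close the gap either. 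The repair closest to your plan is simply to run the staircase in the paper's order, resolving $I_1$ first, then $I_2$, and so on: then each step is a uniform division by $x_{I_k}$ and the zigzag ends exactly at \eqref{taylor_cycle}.
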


\begin{proof}
Recall from \hr[Construction]{celld} that for a given pair of non-intersecting index sets $I = \{i_1, \dots, i_s\}$ and  $J = \{j_1, \dots, j_t\}$ we have a cell
\[
  \varkappa(J,I) = D_{i_1}\cdots D_{i_s} S_{j_1}\cdots S_{j_t}.
\]
It belongs to $\zk$ whenever $I\in\sK$. Using this notation we can rewrite the canonical cellular chain~$h_c(w)$ from \hr[Lemma]{hurewicz_image_old} as follows:
\begin{equation}\label{chain_new_notation}
	h_c(w) = \prod\limits_{k = 1}^n \left(\sum\limits_{I \in \partial\Delta(I_k)} 
  \varkappa\big(I_k\setminus I, I\big) \right).
\end{equation}
Here and below the sum is over maximal simplicies $I \in \partial\Delta(I_k)$ only (otherwise the right hand side above is not a homogeneous element).

Now we apply \hr[Construction]{koszul_to_taylor} to \eqref{chain_new_notation}. We obtain the following zigzag of elements in the bicomplex relating the Koszul complex with differential $\partial_\Z$ to the Taylor complex with differential $\partial_{\Z\langle\sk\rangle}$:
\begin{center}
  \begin{tikzpicture}[xscale=1, yscale=1]
  \node(A) at (0,4)
  {$\varkappa(\varnothing, I_1)\prod\limits_{k=2}^{n} \Bigl(\sum\limits_{I \in \partial\Delta(I_k)} 
  \varkappa\big(I_k\setminus I, I\big) \Bigr)$}; 
  \node(B) at (7.8,4)
  {$\prod\limits_{k = 1}^n \Bigl(\sum\limits_{I \in \partial\Delta(I_k)} \varkappa
  \big(I_k\setminus I, I\big) \Bigr)$};
  \node(C) at (7.8,2)
  {$\prod\limits_{k=2}^{n}
  \Bigl(\sum\limits_{I \in \partial\Delta(I_k)} \varkappa\big(I_k\setminus I, I\big) \Bigr)w_{I_1}$};
  \node(D) at (0,2) 
  {$\varkappa(\varnothing, I_{2})\prod\limits_{k=3}^{n}
  \Bigl(\sum\limits_{I \in \partial\Delta(I_k)} \varkappa\big(I_k\setminus I, I\big) \Bigr)w_{I_1}$};
  \node(E) at (7.8,0) {$\prod\limits_{k=3}^{n}
  \Bigl(\sum\limits_{I \in \partial\Delta(I_k)} \varkappa\big(I_k\setminus I, I\big) \Bigr)
  \Bigl(\sum\limits_{(J\setminus I_1)=I_2}w_J\Bigr)\wedge w_{I_1}$};
  \node(F) at (0,0){\hphantom{aaaaaaaaaa}$\cdots$\hphantom{aaaaaaaaaa}};
  \path[|->, font=\scriptsize]
  (A) edge node[above]{$\partial_{\Z}$} (B)
	(A) edge node[above]{$\partial_{\Z\langle\sk\rangle}$} (C)
	(D) edge node[above]{$\partial_{\Z}$} (C)
	(D) edge node[above]{$\partial_{\Z\langle\sk\rangle}$} (E)
   (F) edge node[above] {$\partial_{\Z}$} (E);
	\end{tikzpicture}
\end{center}
It ends up precisely at element~\eqref{taylor_cycle} in the Taylor complex.
\end{proof}

\begin{example}	
Once again consider the complex $\sk = \partial\Delta(\partial\Delta(1,2,3),4,5)$ shown in \hr[Figure]{12345}. We have $\zk\simeq(S^5)^{\vee 4}\vee(S^6)^{\vee3}\vee S^7\vee S^8$ by~\cite[Example~5.4]{abra}, and each sphere is a Whitehead product. These Whitehead products together with the representing cycles in the Koszul and Taylor complexes are shown in \hr[Table]{t1} for each sphere.
\begin{table}[!h]
	\renewcommand{\arraystretch}{1.7}
	\begin{center}
		\begin{tabular}{|c|c|c|}
       \hline
			\footnotesize{Whitehead product}
			&
			\footnotesize{Koszul (cellular) cycle}
			&
			\footnotesize{Taylor cycle}
       \\
			\hline
 			{\footnotesize $[\mu_1, \mu_2, \mu_3]$}
        &
			{\footnotesize $D_1D_2S_3 + D_1S_2D_3 + S_1D_2D_3$}
			&
			{\footnotesize $w_{123}$}
			\\
			\hline
			{\footnotesize $[\mu_1, \mu_4, \mu_5]$}
			&
       {\footnotesize $D_1D_4S_5 + D_1S_4D_5 + S_1D_4D_5$}
			&
			{\footnotesize $w_{145}$}
			\\
			\hline
			{\footnotesize $[\mu_2, \mu_4, \mu_5]$}
        &
			{\footnotesize $D_2D_4S_5 + D_2S_4D_5 + S_2D_4D_5$}
			&
			{\footnotesize $w_{245}$}
			\\
			\hline
			{\footnotesize $[\mu_3, \mu_4, \mu_5]$}
        &
			{\footnotesize $D_3D_4S_5 + D_3S_4D_5 + S_3D_4D_5$}
			&
			{\footnotesize $w_{345}$}
			\\
			\hline
       {\footnotesize $\big[[\mu_1, \mu_4, \mu_5],\mu_2\big]$}
			&
			{\footnotesize $(D_1D_4S_5 + D_1S_4D_5 + S_1D_4D_5)S_2$}
			&
			{\footnotesize $w_{245}\wedge w_{145}$}
			\\
			\hline
        {\footnotesize $\big[[\mu_1, \mu_4, \mu_5],\mu_3\big]$}
			&
			{\footnotesize $(D_1D_4S_5 + D_1S_4D_5 + S_1D_4D_5)S_3$}
			&
			{\footnotesize $w_{345}\wedge w_{145}$}
			\\
			\hline
        {\footnotesize $\big[[\mu_2, \mu_4, \mu_5],\mu_3\big]$}
			&
			{\footnotesize $(D_2D_4S_5 + D_2S_4D_5 + S_2D_4D_5)S_3$}
			&
			{\footnotesize $w_{345}\wedge w_{245}$}
			\\
			\hline
       {\footnotesize $\big[\big[[\mu_1,\mu_4,\mu_5],\mu_2\big]\mu_3\big]$}
			&
			{\footnotesize $(D_1D_4S_5 + D_1S_4D_5 + S_1D_4D_5)S_2S_3$}
			&
			{\footnotesize $(w_{123} + w_{345})\wedge w_{245}\wedge w_{145}$}
			\\
			\hline
       {\footnotesize $\big[[\mu_1,\mu_2,\mu_3],\mu_4,\mu_5\big]$}
			&
			{\footnotesize $(D_1D_2S_3 + D_1S_2D_3 + S_1D_2D_3)(D_4S_5+S_4D_5)$}
			&
			{\footnotesize $(w_{145}+w_{245}+w_{345}) \wedge w_{123}$}
			\\
			\hline
		\end{tabular}
		\caption{Koszul and Taylor cycles representing Whitehead products}\label{t1}
	\end{center}
\end{table}
\end{example}

An important feature of the Taylor cycle~\eqref{taylor_cycle} is that it has the form of a product of sums of generators $w_J$ corresponding to missing faces, and the rightmost factor is a \emph{single} generator~$w_{I_1}$. This can be seen in the right column of \hr[Table]{t1}. Below we give an example of a Taylor cycle which \emph{does not} have this form. It corresponds to a sphere which \emph{is not} a Whitehead product, although the corresponding $\zk$ is a wedge of spheres. This example was discovered in~\cite[\S7]{abra}. 

\begin{example}
Consider the simplicial complex
\begin{multline*}
	\sk=\partial\Delta\bigl(\partial\Delta(1,2,3),4,5,6\bigr)\cup\Delta(1,2,3)  
  \\=\bigl(\partial\Delta(1,2,3)*\partial\Delta(4,5,6)\bigr)\cup\Delta(1,2,3)\cup\Delta(4,5,6). 
\end{multline*}
We have $\zk\simeq(S^7)^{\vee 6}\vee (S^8)^{\vee 6}\vee (S^9)^{\vee 2}\vee S^{10}$, 
see~\cite[Proposition~7.1]{abra}. Here is the staircase diagram of \hr[Construction]{koszul_to_taylor} relating the Koszul and Taylor cycles corresponding to~$S^{10}$: 
\begin{center}
	\begin{tikzpicture}[xscale=1, yscale=1]
	\node(A) at (0,4.5)
	{$D_1D_2D_3(D_4D_5S_6 + D_4S_5D_6 + S_4D_5D_6)$}; 
	\node(B) at (5,6)
	{$(D_1D_2S_3 + D_1S_2D_3 + S_1D_2D_3)(D_4D_5S_6 + D_4S_5D_6 + S_4D_5D_6)$};
	\node(C) at (5,3)
	{$(D_5S_6 + S_5D_6)w_{1234}+(D_4S_6 + S_4D_6)w_{1235}+(D_4S_5 + S_4D_5)w_{1236}$};
	\node(D) at (0,1.5) 
	{$D_5D_6w_{1234}+D_4D_6w_{1235}+D_4D_5w_{1236}$};
	\node(E) at (5,0) {$-(w_{1234}+w_{1235}+w_{1236})\wedge(w_{1456}+w_{2456}+w_{3456})$};
	\path[|->, font=\scriptsize]
	(A) edge node[above, near start]{$\partial_{\Z}$} (B)
	(A) edge node[above, near end]{$\partial_{\Z\langle\sk\rangle}$} (C)
	(D) edge node[above, near start]{$\partial_{\Z}$} (C)
	(D) edge node[above, near end]{$\partial_{\Z\langle\sk\rangle}$} (E);
	\end{tikzpicture}
\end{center}
We see that the Taylor cycle does not have a factor consisting of a single generator~$w_J$. This reflects the fact that the sphere $S^{10}$ in the wedge is not an iterated higher Whitehead product, see~\cite[Proposition~7.2]{abra}.
\end{example}

Using the same argument as in the proof of \hr[Theorem]{tarep}, we can write down the Taylor cycle representing the Hurewicz image of an \emph{arbitrary} iterated higher Whitehead product, not only a nested one. The general form of the answer is rather cumbersome though. Instead of writing a general formula, we illustrate it on an example.

\begin{example} Consider the substitution complex $\sk = \partial\Delta\big(\partial\Delta(1,2,3), \partial\Delta(4,5,6),7,8\big)$. By~\hr[Theorem]{realisation}, it realises the Whitehead product $w=\big[[\mu_1,\mu_2,\mu_3], [\mu_4,\mu_5,\mu_6],\mu_7,\mu_8\big]$. From the description of the missing faces in \hr[Definition]{substitution_definition} we obtain
\begin{align*}
  \mathop\mathrm{MF}(\sk) = \bigl\{&\Delta(1,2,3), \Delta(4,5,6),\Delta(1,4,7,8),\Delta(1,5,7,8),
  \Delta(1,6,7,8),\\
  &\Delta(2,4,7,8),\Delta(2,5,7,8),\Delta(2,6,7,8),\Delta(3,4,7,8),\Delta(3,5,7,8),\Delta(3,6,7,8)\bigr\}.
\end{align*}
Applying \hr[Construction]{koszul_to_taylor} to the canonical cellular cycle
\[
  h_c(w) = (D_1D_2S_3 + D_1S_2D_3 + S_1D_2D_3)(D_4D_5S_6 + D_4S_5D_6 + S_4D_5D_6)  
  (D_7S_8+S_7D_8)
\]
we obtain the corresponding cycle in the Taylor complex:
\[
  (w_{1478}+w_{1578}+w_{1678}+w_{2478}+w_{2578}+w_{2678}
  +w_{3478}+w_{3578}+w_{3678})\wedge w_{456}\wedge w_{123}.
\]
\end{example}

\appendix\section{Proof of Taylor's theorem}
Here we prove that the complex $T(\m_1, \dots, \m_t)$ introduced in \hr[Construction]{tares} is a free resolution and the complex $T'(\m_1, \dots, \m_t)$ from \hr[Construction]{tacor} is a cofree resolution. In the case of modules, the argument was outlined in~\cite[Exercise~17.11]{eise95} (see also~\cite[Theorem~7.1.1]{he-hi11}). The comodule case is obtained by dualisation.

\begin{theorem}\label{taylor}\ 
\begin{itemize}[leftmargin=0.06\textwidth]
\item[\textup(a\textup)]
$T(\m_1, \dots, \m_t)$ is a free resolution of the $\k[m]$-module 
$\k[m]/(\m_1, \dots, \m_t)$.

\item[\textup(b\textup)]
$T'(\m_1, \dots, \m_t)$ is a cofree resolution of the $\k\langle m\rangle$-comodule $C(\m_1, \dots, \m_t)$.
\end{itemize}
\end{theorem}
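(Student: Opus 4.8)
The plan is to prove (a) by checking exactness of the Taylor complex one multidegree at a time, and then to obtain (b) from (a) by graded $\k$-linear duality.

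For (a), I would first record that $T(\m_1,\dots,\m_t)$ is naturally a complex of $\Z^m$-graded free $\k[m]$-modules, with the basis element $e_J$ placed in the multidegree of the monomial $\m_J=\lcm_{j\in J}\m_j$; this is consistent precisely because each ratio $\m_J/\m_{J\setminus j}$ is a monomial, and it makes the differential homogeneous of multidegree $0$. Hence it suffices to show that the augmented complex $\cdots\to F_1\to F_0\to\k[m]/(\m_1,\dots,\m_t)\to0$ is exact in every multidegree $\mathbf a\in\Z^m_{\ge0}$. Fixing $\mathbf a$ and setting $V_{\mathbf a}=\{i:\m_i\text{ divides }x^{\mathbf a}\}$, the decisive observation is that $\m_J$ divides $x^{\mathbf a}$ if and only if $J\subseteq V_{\mathbf a}$; thus the degree-$\mathbf a$ strand of $F_\bullet$ has $\k$-basis $\{e_J:J\subseteq V_{\mathbf a}\}$, and under the identification $e_J\leftrightarrow(\text{oriented face on the vertex set }J)$ the Taylor differential, with its signs $\sign(j,J)=(-1)^{n-1}$, becomes exactly the augmented simplicial boundary of the full simplex on $V_{\mathbf a}$, with $e_\varnothing$ serving as the empty face. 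When $V_{\mathbf a}\ne\varnothing$ this simplex is contractible, so the strand is acyclic, matching the vanishing of the degree-$\mathbf a$ component of $\k[m]/(\m_1,\dots,\m_t)$; when $V_{\mathbf a}=\varnothing$ the strand is just $0\to\k\xrightarrow{\ \cong\ }\k\to0$, matching that component being $\k$. Either way the strand is exact, proving (a). (This is the instance of the standard acyclicity criterion for simplicial/cellular resolutions applied to the full simplex $\Delta^{t-1}$.)

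For (b), I would observe that with the same $\Z^m$-grading, in each multidegree $\mathbf a$ the augmented Taylor complex of (a) is a \emph{bounded} complex of \emph{finitely generated free} $\k$-modules which is exact, hence split exact, hence remains exact after applying $\operatorname{Hom}_\k(-,\k)$ — so no flatness of $\k$ is needed. One then checks the routine identifications: $C(\m_1,\dots,\m_t)$ is the graded $\k$-dual of $\k[m]/(\m_1,\dots,\m_t)$ (both carry the basis of monomials divisible by no $\m_i$, with comultiplication dual to multiplication); each cofree comodule $I^s$ is the graded dual of $F_s$; and the comodule differential of Construction~\ref{tacor} is the transpose of the module differential of Construction~\ref{tares}, the stipulation that a non-monomial ``quotient'' be set to zero matching the vanishing of the corresponding entry of $d$. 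Dualising the exact augmented complex of (a) multidegree by multidegree then yields exactness of $0\to C(\m_1,\dots,\m_t)\to I^0\to I^1\to\cdots\to I^t\to0$, which is assertion (b). (Alternatively one can skip duality and rerun the multidegree argument verbatim for $T'(\m_1,\dots,\m_t)$: each strand is the augmented simplicial \emph{co}chain complex of a full simplex, again acyclic by contractibility.)

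The work here is essentially bookkeeping rather than conceptual. The two points needing care are: pinning down the $\Z^m$-grading so that ``exact in each multidegree'' is a legitimate reduction (this is where one uses that the $\m_J$ are monomials), and matching the sign convention $\sign(j,J)=(-1)^{n-1}$ with the simplicial boundary operator — which as a byproduct also yields $d^2=0$ from $\partial^2=0$. For (b) the only subtlety is confirming that the cofree-comodule differential of Construction~\ref{tacor} is a genuine transpose of the module differential; beyond that I expect no real obstacle.
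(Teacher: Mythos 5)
Your argument is correct, but it proves Taylor's theorem by a genuinely different route than the paper does. The paper argues by induction on the number $t$ of monomial generators: setting $\n_i=\m_i/\gcd(\m_i,\m_t)$, it uses the ideal quotient identity $(\m_1,\dots,\m_{t-1}:\m_t)=(\n_1,\dots,\n_{t-1})$, the short exact sequence $0\to\k[m]/(\n_1,\dots,\n_{t-1})\xrightarrow{\cdot\m_t}\k[m]/(\m_1,\dots,\m_{t-1})\to\k[m]/(\m_1,\dots,\m_t)\to0$, and then identifies $T(\m_1,\dots,\m_t)$ with the mapping cone of the induced comparison map $\widetilde\vphi\colon T(\n_1,\dots,\n_{t-1})\to T(\m_1,\dots,\m_{t-1})$ (\hr[Lemmata]{cone_of_morphism}--\ref{conta}); part (b) is then obtained by rerunning the whole cone argument in the comodule setting (cocone, dual exact sequence). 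You instead exploit the $\Z^m$-grading and check exactness strand by strand, observing that the degree-$\mathbf a$ strand is the augmented simplicial chain complex of the full simplex on $V_{\mathbf a}=\{i:\m_i\mid x^{\mathbf a}\}$, hence acyclic; this is the classical ``cellular resolution supported on $\Delta^{t-1}$'' argument and is arguably more elementary, requiring no induction and no cone bookkeeping. Your treatment of (b) is also cheaper than the paper's: since each strand is a bounded exact complex of finitely generated free $\k$-modules, it is split exact and survives $\k$-dualisation for an arbitrary commutative ring $\k$, so (b) follows from (a) once one checks that $C(\m_1,\dots,\m_t)$, the cofree comodules $I^s$ of \hr[Construction]{tacor}, and the comodule differential are the graded duals of the corresponding data in \hr[Construction]{tares} (your remark about the non-monomial quotients being set to zero is exactly the right matching; note only that the sign $\sign(j,J)$ there is to be read as the position of $j$ in $J\cup\{j\}$, which is what the transpose produces). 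The paper's inductive approach buys an explicit chain-level comparison map and cone identification, which it reuses nowhere else but which keeps the module and comodule cases formally parallel; your approach buys brevity, a transparent reason for acyclicity, and a clean deduction of (b) from (a) by duality. Both are complete proofs.
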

\begin{proof}
Denote $\n_i = \frac{\m_i}{\gcd(\m_i, \m_t)}$. 
Then we have\footnote{Given ideals $\mathcal {I,J}$ in a commutative ring $R$, the ideal quotient is defined as $(\mathcal I : \mathcal J) = \{f\in R\,|\, f\mathcal J \subset \mathcal I\}$.}	$(\m_1,\dots, \m_{t-1} : \m_t) = (\n_1, \dots, \n_{t-1})$.

In the case of modules, there is a short exact sequence
\[
  0\to \k[m]/(\n_1, \dots, \n_{t-1}) \xrightarrow{\cdot \m_t} \k[m]/(\m_1, \dots, \m_{t-1})\to 
  \k[m]/(\m_1, \dots, \m_t) \to 0.
\]
Assume by induction that $T(\m_1, \dots, \m_{t-1})$ is a resolution. Consider the injective morphism
\[  
  \varphi\colon \k[m]/(\n_1, \dots, \n_{t-1}) 
  \stackrel{\cdot \m_t}\longrightarrow \k[m]/(\m_1, \dots, \m_{t-1})
\]
and the induced morphism of resolutions
\[
  \widetilde\vphi\colon T(\n_1, \dots, \n_{t-1}) \to T(\m_1, \dots, \m_{t-1}).
\]
The proof consists of three lemmata, proved separately below. By \hr[Lemma]{conta}, the complex $T(\m_1, \dots, \m_t)$ can be identified with the cone of the morphism $\widetilde\vphi$. Then \hyperref[cone_of_morphism]{Lemma~\ref{cone_of_morphism}} implies that $T(\m_1, \dots, \m_t)$ is a resolution for $\k[m]/(\m_1, \dots, \m_t)$.

Similarly, in the comodule case we consider the short exact sequence of comodules
\[
  0\to C(\m_1, \dots, \m_t) \to C(\m_1, \dots, \m_{t-1})\xrightarrow{\cdot \frac{1}{\m_t}} 
  C(\n_1, \dots, \n_{t-1})\to 0,
\]
use induction, and apply the lemmata below.
\end{proof}

\begin{lemma}\label{cone_of_morphism}\ 
\begin{itemize}[leftmargin=0.06\textwidth]
	\item[\textup(a\textup)]
	Let $\vphi\colon\bar V \to V$ be an injective morphism of modules. Let $\bar U_{\bullet} \to\bar V$ and $U_{\bullet} \to V$ be resolutions. Then the cone $C(\widetilde \vphi)$ of the induced morphism of resolutions $\widetilde\vphi\colon\bar U_{\bullet} \to U_{\bullet}$ is a resolution for $V/\vphi(\bar V)$.

\item[\textup(b\textup)]
Let $\vphi'\colon A \to\bar A$ be a surjective morphism of comodules. Let $A \to B^{\bullet}$ and $\bar A \to\bar B^{\bullet}$ be resolutions. Then the cocone $C'(\widetilde \vphi')$ of the induced morphism of resolutions $\widetilde\vphi'\colon B^{\bullet} \to\bar B^{\bullet}$ is a resolution for $\ker(\varphi'\colon A \to\bar A)$.
\end{itemize}
\end{lemma}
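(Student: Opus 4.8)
The plan is to deduce both parts from the long exact sequence of a mapping cone, together with the single observation that a resolution is a complex whose homology is concentrated in degree~$0$.

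For part~(a), recall that the cone $C(\widetilde\vphi)$ of a chain map $\widetilde\vphi\colon\bar U_\bullet\to U_\bullet$ has $C(\widetilde\vphi)_n=U_n\oplus\bar U_{n-1}$ with the usual cone differential $\bigl(\begin{smallmatrix} d_U & \widetilde\vphi\\ 0 & -d_{\bar U}\end{smallmatrix}\bigr)$. Since $\bar U_\bullet$ and $U_\bullet$ consist of free $\k[m]$-modules and are concentrated in nonnegative degrees, so is $C(\widetilde\vphi)$; thus it only remains to compute its homology. There is a short exact sequence of complexes $0\to U_\bullet\to C(\widetilde\vphi)\to(\bar U_\bullet)[-1]\to0$, whose associated long exact sequence in homology is
\[
  \cdots\to H_n(\bar U_\bullet)\xrightarrow{H_n(\widetilde\vphi)}H_n(U_\bullet)\to H_n\bigl(C(\widetilde\vphi)\bigr)\to H_{n-1}(\bar U_\bullet)\xrightarrow{H_{n-1}(\widetilde\vphi)}H_{n-1}(U_\bullet)\to\cdots,
\]
the connecting map being $H_*(\widetilde\vphi)$ up to a sign that is irrelevant in what follows. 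Now $H_0(U_\bullet)=V$, $H_0(\bar U_\bullet)=\bar V$, all higher homology vanishes, and $H_0(\widetilde\vphi)=\vphi$. For $n\ge2$ the two groups adjacent to $H_n(C(\widetilde\vphi))$ are zero, so $H_n(C(\widetilde\vphi))=0$. For $n=1$ the sequence reads $0\to H_1(C(\widetilde\vphi))\to\bar V\xrightarrow{\vphi}V$, and injectivity of $\vphi$ forces $H_1(C(\widetilde\vphi))=0$. For $n=0$ it reads $\bar V\xrightarrow{\vphi}V\to H_0(C(\widetilde\vphi))\to0$, so $H_0(C(\widetilde\vphi))=\operatorname{coker}\vphi=V/\vphi(\bar V)$. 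Hence $C(\widetilde\vphi)$ is a free resolution of $V/\vphi(\bar V)$.

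Part~(b) is the dual statement and is proved by the same bookkeeping applied to cochain complexes. The cocone $C'(\widetilde\vphi')$ of $\widetilde\vphi'\colon B^\bullet\to\bar B^\bullet$ has $C'(\widetilde\vphi')^n=B^n\oplus\bar B^{n-1}$, consists of cofree $\k\langle m\rangle$-comodules, and is concentrated in nonnegative cohomological degrees. From the short exact sequence $0\to(\bar B^\bullet)[-1]\to C'(\widetilde\vphi')\to B^\bullet\to0$ one gets
\[
  \cdots\to H^{n-1}(\bar B^\bullet)\to H^n\bigl(C'(\widetilde\vphi')\bigr)\to H^n(B^\bullet)\xrightarrow{H^n(\widetilde\vphi')}H^n(\bar B^\bullet)\to\cdots.
\]
Using $H^0(B^\bullet)=A$, $H^0(\bar B^\bullet)=\bar A$, vanishing of higher cohomology, and $H^0(\widetilde\vphi')=\vphi'$, we get $H^n(C'(\widetilde\vphi'))=0$ for $n\ge2$; surjectivity of $\vphi'$ gives $H^1(C'(\widetilde\vphi'))=\operatorname{coker}\vphi'=0$; and $H^0(C'(\widetilde\vphi'))=\ker\vphi'$. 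Thus $C'(\widetilde\vphi')$ is a cofree resolution of $\ker(\vphi'\colon A\to\bar A)$, which is precisely what the inductive step in the proof of \hr[Theorem]{taylor} requires.

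The only point that needs care is the identification of the connecting homomorphism of each long exact sequence with $H_*(\widetilde\vphi)$, respectively $H^*(\widetilde\vphi')$; this is the standard computation with the cone differential, and the ambiguous sign does not affect the kernels and cokernels we extract. Existence of the lifted maps $\widetilde\vphi$ and $\widetilde\vphi'$ is the usual comparison theorem for free (respectively cofree) resolutions, which is what the phrase ``induced morphism of resolutions'' in the statement refers to, so no additional argument is needed there.
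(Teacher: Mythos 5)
Your argument is correct and is essentially the paper's own proof: both compute the homology of the mapping cone (respectively cocone) via its long exact sequence, identify the connecting map with $H_*(\widetilde\vphi)$ (resp.\ $H^*(\widetilde\vphi')$), and use injectivity of $\vphi$ (resp.\ surjectivity of $\vphi'$) to kill the one remaining group, leaving only $\operatorname{coker}\vphi=V/\vphi(\bar V)$ (resp.\ $\ker\vphi'$) in degree~$0$. No gaps; the paper likewise treats part~(b) as a straightforward dualisation.
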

\begin{proof}
Consider the homology long exact sequence associated with the cone $C(\widetilde \vphi)$:
\[
	\begin{array}{ccccccccccc}
	\cdots\longrightarrow& H_1(U_\bullet) & \longrightarrow
	& H_1(C(\widetilde\varphi)) & \longrightarrow& H_0(\bar U_\bullet) & \longrightarrow
	& H_0(U_\bullet) & \longrightarrow & H_0(C(\widetilde\varphi)) &\to 0\\[3pt]
	&\|&&&&\|&&\|&&\|\\[2pt]
	&0&&&&\bar V&\stackrel\varphi\longrightarrow&V&\longrightarrow&V/\varphi(\bar V)&\to0
	\end{array}
\]
Injectivity of $\varphi\colon\bar V\to V$ implies that $H_1(C(\widetilde{\varphi}))=0$. Vanishing of the higher homology groups $H_i\big(C(\widetilde{\varphi})\big)$, $i>1$, follows from the exactness. Hence, $C(\widetilde{\varphi})$ is a resolution for $H_0(C(\widetilde{\varphi})) \cong V/\varphi(\bar V)$.

The comodule case is proved by straightforward dualisation.
%
\end{proof}

\begin{lemma}\
\begin{itemize}[leftmargin=0.06\textwidth]
	\item[\textup(a\textup)]The morphism $\widetilde \varphi\colon T(\n_1,\dots, \n_{t-1}) \to T(\m_1, \dots, \m_{t-1})$ is given by
\[
  \widetilde\vphi(\bar e_J) = \frac{\m_{J\cup \{t\}}}{\m_J}e_J,\qquad J\subset \{1, \dots, t-1\}.
\]

	\item[\textup(b\textup)]
The morphism $\widetilde \varphi'\colon T'(\m_1, \dots, \m_{t-1}) \to T'(\n_1,\dots, \n_{t-1})$ is given by
\[
  \widetilde\varphi'(x_1^{\alpha_1}\cdots x_{\vphantom{1} m}^{\alpha_m}e^J) = 
  \frac{\m_J}{\m_{J\cup \{t\}}}
  x_1^{\alpha_1}\cdots x_{\vphantom{1} m}^{\alpha_m}\bar e^J,\qquad J\subset \{1, \dots, t-1\}.
\]
\end{itemize}
\end{lemma}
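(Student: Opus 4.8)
The plan is to reduce both statements to a single elementary identity about least common multiples and then carry out two short coefficient comparisons. Keeping the notation of the proof of \hr[Theorem]{taylor}, write $\n_i=\m_i/\gcd(\m_i,\m_t)$ and, for $S\subseteq\{1,\dots,t-1\}$, set $\m_S=\lcm_{i\in S}\m_i$ and $\n_S=\lcm_{i\in S}\n_i$, with $\m_\varnothing=\n_\varnothing=1$. The first thing I would establish is
\[
  \m_{S\cup\{t\}}=\m_t\cdot\n_S\qquad\text{for every }S\subseteq\{1,\dots,t-1\}.
\]
This is checked one variable at a time: if $\nu_k$ denotes the exponent of $x_k$ in a monomial and $a_i=\nu_k(\m_i)$, then $\nu_k(\n_i)=\max(a_i-a_t,0)$, so $\nu_k(\n_S)=\max_{i\in S}\max(a_i-a_t,0)=\max\bigl(\max_{i\in S}a_i-a_t,\,0\bigr)$, whereas $\nu_k(\m_{S\cup\{t\}})=\max\bigl(\max_{i\in S}a_i,\,a_t\bigr)$; the two sides agree by the elementary relation $\max(b,c)=c+\max(b-c,0)$. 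In particular $\m_J\mid\m_{J\cup\{t\}}$, so the coefficient $\m_{J\cup\{t\}}/\m_J$ in part~(a) is an honest monomial and $\m_J/\m_{J\cup\{t\}}$ in part~(b) is the reciprocal of one.

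For part~(a) I would take the displayed formula as the definition of a $\k[m]$-module homomorphism $\widetilde\varphi$ in each homological degree, which is legitimate by the divisibility just noted, and verify two things. First, $\widetilde\varphi$ is compatible with the augmentations: $\widetilde\varphi(\bar e_\varnothing)=\m_t e_\varnothing$, while $\varphi$ sends the class of $1$ to the class of $\m_t$, so the relevant square commutes. Second, $d\circ\widetilde\varphi=\widetilde\varphi\circ\bar d$: expanding both sides on a basis element $\bar e_J$ with the Taylor differentials of $T(\m_1,\dots,\m_{t-1})$ and $T(\n_1,\dots,\n_{t-1})$, and comparing the coefficient of $e_{J\setminus j}$ for each $j\in J$ (the signs $\sign(j,J)$ depend only on the subset $J$ and hence cancel), this reduces exactly to
\[
  \frac{\n_J}{\n_{J\setminus j}}=\frac{\m_{J\cup\{t\}}}{\m_{(J\setminus j)\cup\{t\}}},
\]
which follows from the key identity after cancelling $\m_t$ from numerator and denominator. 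Thus $\widetilde\varphi$ is a morphism of complexes lifting $\varphi$, i.e.\ (a representative of) the induced morphism of resolutions, and it is the explicit morphism whose cone is identified in \hr[Lemma]{conta}.

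For part~(b) the argument is formally dual: the displayed formula defines a $\k\langle m\rangle$-comodule map $\widetilde\varphi'\colon T'(\m_1,\dots,\m_{t-1})\to T'(\n_1,\dots,\n_{t-1})$, where — exactly as in \hr[Construction]{tacor} — a term is set to zero whenever the indicated fraction fails to be a monomial. Compatibility with the co-augmentations is the degree-zero case, and the chain-map identity $\partial\circ\widetilde\varphi'=\widetilde\varphi'\circ\partial$ reduces, by the same coefficient matching, to $\n_{J\cup\{j\}}/\n_J=\m_{J\cup\{j,t\}}/\m_{J\cup\{t\}}$, again immediate from $\m_{S\cup\{t\}}=\m_t\n_S$. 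The one point I expect to need genuine care — the main obstacle, such as it is — is that this zero-convention must be honoured consistently on both sides of the chain-map equation. I would handle it by observing that $\m_{J\cup\{t\}}/\m_J$ divides $\m_{J\cup\{j,t\}}/\m_J$, so that $\widetilde\varphi'(x^\alpha e^J)=0$ forces every term of $\widetilde\varphi'\bigl(\partial(x^\alpha e^J)\bigr)$ to vanish as well, while the per-variable exponent bookkeeping coming from the key identity shows that a surviving term on the $\partial'\circ\widetilde\varphi'$ side equals (as a Laurent monomial) the corresponding term on the $\widetilde\varphi'\circ\partial$ side, so the two are monomials, or are discarded, simultaneously. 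With this in hand $\widetilde\varphi'$ is the induced morphism of cofree resolutions, and the lemma is proved.
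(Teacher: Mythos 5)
Your proposal is correct and follows essentially the same route as the paper: both arguments verify that the stated formulas commute with the Taylor differentials (and lift $\varphi$), which after the signs cancel reduces to the identity $\m_{J\cup\{t\}}/\m_{(J\setminus\{j\})\cup\{t\}}=\n_{J}/\n_{J\setminus\{j\}}$, which you obtain from $\m_{S\cup\{t\}}=\m_t\,\n_S$. Your per-variable proof of that identity, the explicit augmentation check, and the careful handling of the zero-convention in the comodule case (which the paper dismisses with ``by dualisation'') are sound refinements of the same argument rather than a different one.
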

\begin{proof}
We need to show that the described maps commute with the differentials, as this property defines a morphism of resolutions uniquely. 

For (a), denote $T(\n_1,\dots, \n_{t-1}) = \{\bar F_{\bullet}, \bar d\}$ and $T(\m_1,\dots, \m_{t-1}) = \{F_{\bullet}, d\}$. Recall that $F_{\bullet}$ has basis $\{e^J\}$ indexed by subsets $J\subset \{1,\ldots, t-1\}$, and denote the corresponding basis elements of $\bar F_{\bullet}$ by $\bar e^J$. The required property follows by considering the diagram
\begin{center}
\begin{tikzcd}[column sep=2em,row sep=2em]
  \bar F_s \ar[rrr, "\bar d"] \ar[dddd, "\widetilde\vphi"] &&& \bar F_{s-1} \ar[dddd, "\widetilde\vphi"] \\
	& \bar e_J \ar[r, "\bar d", maps to] \ar[dd, "\widetilde\vphi", maps to] & 
  \sum\limits_{j\in J}\sign(j, J)\frac{\n_J}{\n_{J\setminus \{j\}}}\bar e_{J\setminus\{j\}} 
  \ar[d, "\widetilde\vphi", maps to] \\ 
  && \sum\limits_{j\in J}\sign(j, J)\frac{\n_J}{\n_{J\setminus \{j\}}}
  \frac{\m_{(J\setminus \{j\})\cup\{t\}}}{\m_{J\setminus \{j\}}}e_{J\setminus\{j\}} \\ 
  & \frac{\m_{J\cup\{t\}}}{\m_J}e_J \ar[r, "d", maps to] & \sum\limits_{j\in J}\sign(j, J)
  \frac{\m_J}{\m_{J\setminus \{j\}}}\frac{\m_{J\cup\{t\}}}{\m_J}e_{J\setminus\{j\}} \ar[u, equal] \\
  F_s  \ar[rrr, "d"] &&& F_{s-1}.
\end{tikzcd}
\end{center}
Here we used the identity
\[
  \frac{\m_{J\cup\{t\}}}{\m_{(J\setminus\{j\})\cup\{t\}}}=\frac{\n_{J}}{\n_{J\setminus\{j\}}},
\]
which follows from the defintion of $\n_i$.

Statement (b) is proved by dualisation.
\end{proof}

\begin{lemma}\label{conta}
Up to a sign in the differentials, 
\begin{itemize}[leftmargin=0.06\textwidth]
	\item[\textup(a\textup)]
	the cone complex $C(\widetilde\vphi)$ is isomorphic to $T(\m_1, \dots, \m_t)$;

\item[\textup(b\textup)]
the cocone complex $C'(\widetilde\vphi')$ is isomorphic to $T'(\m_1, \dots, \m_t)$.
\end{itemize}
\end{lemma}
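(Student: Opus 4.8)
The plan is to prove both statements by writing down explicit bijections of bases that identify the cone (resp.\ cocone) complex with the Taylor complex $T(\m_1,\dots,\m_t)$ (resp.\ $T'(\m_1,\dots,\m_t)$), and then to check compatibility with the differentials by a direct computation, invoking the formula for $\widetilde\vphi$ (resp.\ $\widetilde\vphi'$) and the identity $\frac{\m_{J\cup\{t\}}}{\m_{(J\setminus j)\cup\{t\}}}=\frac{\n_J}{\n_{J\setminus j}}$ established in the preceding lemma. The whole argument is elementary bookkeeping; the only thing requiring care is tracking the shuffle signs $\sign(j,J)$ and the degree shift so as to isolate the single sign factor that the statement allows.

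For part (a), recall that the degree-$s$ component of the cone $C(\widetilde\vphi)$ is $F_s\oplus\bar F_{s-1}$, where $\{F_\bullet,d\}=T(\m_1,\dots,\m_{t-1})$ and $\{\bar F_\bullet,\bar d\}=T(\n_1,\dots,\n_{t-1})$, with differential built from $d$, $\pm\bar d$ and the connecting map $\widetilde\vphi\colon\bar F_{s-1}\to F_{s-1}$. On the other side, the basis $\{e_J:J\subset\{1,\dots,t\},\ |J|=s\}$ of the degree-$s$ part of $T(\m_1,\dots,\m_t)$ splits into the subsets with $t\notin J$ and the subsets of the form $J=J'\cup\{t\}$ with $J'\subset\{1,\dots,t-1\}$, $|J'|=s-1$. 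The first family is literally the standard basis of $F_s$, and I would define the identification by $e_{J'\cup\{t\}}\mapsto\bar e_{J'}$ on the second family, which identifies its span with $\bar F_{s-1}$. This yields the required isomorphism of graded free modules, and it remains to match the differentials.

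Here I would compute $d(e_J)$ in the two cases. If $t\notin J$, then every summand in $d(e_J)$ is again indexed by a subset not containing $t$, so on this part the differential of $T(\m_1,\dots,\m_t)$ is precisely that of $T(\m_1,\dots,\m_{t-1})$. If $J=J'\cup\{t\}$, I split $d(e_{J'\cup\{t\}})$ into the summand $j=t$ and the summands $j\in J'$. Since $t$ is the largest element, the $j=t$ summand equals $(-1)^{s-1}\frac{\m_{J'\cup\{t\}}}{\m_{J'}}\,e_{J'}=(-1)^{s-1}\widetilde\vphi(\bar e_{J'})$ by the formula for $\widetilde\vphi$ from the preceding lemma, i.e.\ it is, up to the sign $(-1)^{s-1}$, the connecting map of the cone. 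For $j\in J'$ one has $\sign(j,J'\cup\{t\})=\sign(j,J')$, and using $\frac{\m_{J'\cup\{t\}}}{\m_{(J'\setminus j)\cup\{t\}}}=\frac{\n_{J'}}{\n_{J'\setminus j}}$ these summands become $\sum_{j\in J'}\sign(j,J')\frac{\n_{J'}}{\n_{J'\setminus j}}\,e_{(J'\setminus j)\cup\{t\}}$, which is exactly $\bar d(\bar e_{J'})$ read through the bijection $e_{(J'\setminus j)\cup\{t\}}\mapsto\bar e_{J'\setminus j}$. Thus under our identification the differential of $T(\m_1,\dots,\m_t)$ coincides with that of $C(\widetilde\vphi)$ after correcting the sign of the $\bar d$-part and inserting the power $(-1)^{s-1}$ in front of the connecting map, which is precisely the "up to a sign in the differentials" clause of the lemma.

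Part (b) is the term-by-term dualization. The degree-$s$ component of the cocone $C'(\widetilde\vphi')$ is $B^s\oplus\bar B^{s-1}$ (with the appropriate cohomological-degree convention), where $B^\bullet=T'(\m_1,\dots,\m_{t-1})$, $\bar B^\bullet=T'(\n_1,\dots,\n_{t-1})$, and again one splits the basis $\{e^J:|J|=s\}$ of $I^s$ by whether $t\in J$, using $e^{J'\cup\{t\}}\mapsto\bar e^{J'}$. The analogous three-way splitting of $\partial(x_1^{\alpha_1}\cdots x_m^{\alpha_m}\,e^J)$ gives: for $t\notin J$, the differential of $T'(\m_1,\dots,\m_{t-1})$; for $J=J'\cup\{t\}$, the summands $j\ne t$ reproduce $\bar\partial$ via the same identity in the form $\frac{\m_{(J'\cup j)\cup\{t\}}}{\m_{J'\cup\{t\}}}=\frac{\n_{J'\cup j}}{\n_{J'}}$, and the summand $j=t$ reproduces, up to sign, the connecting map $\widetilde\vphi'$ by the formula for $\widetilde\vphi'$ from the preceding lemma. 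Assembling these identifications gives the isomorphism in (b). I expect no conceptual obstacle here: the main (and essentially the only) difficulty, as in part (a), is to be scrupulous with the shuffle signs and the degree shift so that the discrepancy between the two differentials is confined to a single uniform sign.
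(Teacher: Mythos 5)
Your proposal is correct and follows essentially the same route as the paper: identifying the basis elements $e_J$ ($t\notin J$) with $F_\bullet$ and $e_{J'\cup\{t\}}$ with $\bar e_{J'}$ is exactly the paper's map $\psi$, and the verification via the sign computation $\sign(t,J'\cup\{t\})=(-1)^{s-1}$, $\sign(j,J'\cup\{t\})=\sign(j,J')$ and the identity $\m_{J'\cup\{t\}}/\m_{(J'\setminus j)\cup\{t\}}=\n_{J'}/\n_{J'\setminus j}$ is the same diagram chase, with the dual bookkeeping for part (b).
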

\begin{proof}
For (a), we denote $T(\n_1,\dots, \n_{t-1}) = \{\bar F_{\bullet}, \bar d\}$, $T(\m_1,\dots, \m_{t-1}) = \{F_{\bullet}, d\}$ and $T(\m_1, \dots, \m_t) = \{\widetilde F_{\bullet}, \widetilde d\}$.
	
We shall define a morphism $\psi\colon C(\widetilde{\varphi})\to T(\m_1, \dots, \m_t)$, that is, $\psi\colon \bar F_s\oplus F_{s+1}\to\widetilde F_{s+1}$ commuting with the differentials. As $F_\bullet$ is a subcomplex of both $C(\widetilde{\varphi})$ and $\widetilde F_\bullet$, we define $\psi$ on $e_J\in F_{s+1}$ by $\psi(e_J) = \widetilde e_J$. Now we define $\psi$ on $\bar e_J\in\bar F_s$ by the formula $\psi(\bar e_J) = \widetilde{e}_{J\cup\{t\}}$. The following diagram shows that the resulting map $\psi$ indeed commutes with the differentials:
\begin{center}
\begin{tikzcd}[column sep=2em,row sep=2em]
  \bar F_s\oplus F_{s+1} \ar[rrr, "d_{C(\widetilde\vphi)}"] \ar[ddd, "\psi"]
  &&&\bar F_{s-1}\oplus F_s \ar[ddd, "\psi"] \\
  &\bar e_J \ar[r, "\widetilde\vphi-\bar d", maps to] \ar[d, "\psi", maps to] & 
  \frac{\m_{J\cup\{t\}}}{\m_J}e_J - \sum\limits_{j\in J}\sign(j, J)
  \frac{\n_J}{\n_{J\setminus \{j\}}}\bar e_{J\setminus\{j\}} \ar[d, "\psi", maps to] \\ 
  & \pm \widetilde e_{J\cup\{t\}} \ar[r, "\widetilde d", maps to] & 
  \frac{\m_{J\cup\{t\}}}{\m_J}\widetilde e_J \pm \sum\limits_{j\in J}\sign(j, J)\frac{\n_J}
  {\n_{J\setminus \{j\}}}\widetilde e_{(J\setminus\{j\})\cup\{t\}} \\ 
  \widetilde F_{s+1}  \ar[rrr, "\widetilde d"] &&& \widetilde F_s.
\end{tikzcd}
\end{center}
Thus, $\psi$ defines a morphism $C(\widetilde{\varphi})\to T(\m_1, \dots, \m_t)$, which is clearly an isomorphism.

\smallskip

For (b), we use the notation $T'(\n_1,\dots, \n_{t-1}) = \{\bar I^{\bullet}, \bar\partial\}$, $T'(\m_1,\dots, \m_{t-1}) = \{I^{\bullet}, \partial\}$, and $T'(\m_1, \dots, \m_t) = \{\widetilde I^{\bullet}, \widetilde \partial\}$.
	
We define $\psi'\colon T'(\m_1, \dots, \m_t)\to C'(\widetilde{\varphi})$, that is, $\psi'\colon \widetilde I^s\to I^s\oplus \bar I^{s-1}$ by the formula
\[
  \psi'(x_1^{\alpha_1}\cdots x_{\vphantom{1} m}^{\alpha_m}\widetilde e^J) =
  \begin{cases}
  (-1^{|J|-1}x_1^{\alpha_1}\cdots x_{\vphantom{1} m}^{\alpha_m}\bar e^{J\setminus\{t\}},
  \quad&\text{for $t\in J$},\\[2pt]
  (-1)^{|J|}x_1^{\alpha_1}\cdots x_{\vphantom{1} m}^{\alpha_m} e^J, &\text{for $t\notin J$.}
	\end{cases}
\]
We need to check that $\psi'$ commutes with the differentials. For $t\in J$ we have
\begin{center}
\begin{tikzcd}[column sep=2em,row sep=2em]
  x_1^{\alpha_1}\cdots x_{\vphantom{1} m}^{\alpha_m}\widetilde e^J
  \ar[r,"\widetilde\partial",maps to]\ar[d, "\psi' ", maps to] &
  \sum\limits_{j\notin J}\sign(j,J)\frac{x_1^{\alpha_1}\cdots x_{\vphantom{1}m}^{\alpha_m}\m_J}
  {\m_{J\cup\{j\}}}\widetilde e^{J\cup\{j\}} \ar[d,"-\psi' ",maps to] \\
  (-1)^{|J|-1}x_1^{\alpha_1}\cdots x_{\vphantom{1} m}^{\alpha_m}\bar e^{J\setminus\{t\} }
  \ar[r,"\bar\partial",maps to] &
	(-1)^{|J|-1}\sum\limits_{j\notin J}\sign(j,J)
  \frac{x_1^{\alpha_1}\cdots x_{\vphantom{1} m}^{\alpha_m}\n_J}
  {\n_{J\cup\{j\}}}\bar e^{J\cup\{j\}\setminus\{t\} }.
\end{tikzcd}
\end{center}
For $t\notin J$ we have
\begin{center}
\begin{tikzcd}[column sep=2em,row sep=2em]
  x_1^{\alpha_1}\cdots x_{\vphantom{1} m}^{\alpha_m}\widetilde e^J
  \ar[r,"\widetilde\partial",maps to]\ar[d, "\psi' ", maps to] &
  \sum\limits_{j\notin J,\; j \neq t}\sign(j,J)
  \frac{x_1^{\alpha_1}\cdots x_{\vphantom{1} m}^{\alpha_m}\m_J}
  {\m_{J\cup\{j\}}}\widetilde e^{J\cup\{j\}} 
  + (-1)^{|J|}\frac{x_1^{\alpha_1}\cdots x_{\vphantom{1} m}^{\alpha_m}\m_J}    
  {\m_{J\cup\{t\}}}\widetilde e^{J\cup\{t\}} \ar[d,"\psi' ",maps to]\\
  x_1^{\alpha_1}\cdots x_{\vphantom{1} m}^{\alpha_m}e^J
  \ar[r," - \partial + \widetilde\varphi' ",maps to] &
  -\sum\limits_{j\notin J,\; j \neq t}\sign(j,J)
  \frac{x_1^{\alpha_1}\cdots x_{\vphantom{1} m}^{\alpha_m}\m_J}
  {\m_{J\cup\{j\}}}e^{J\cup\{j\}} + 
  \frac{x_1^{\alpha_1}\cdots x_{\vphantom{1} m}^{\alpha_m}\m_J}
  {\m_{J\cup\{t\}}}\bar e^J;
\end{tikzcd}
\end{center}
%
We therefore obtain the required isomorphism $\psi'\colon T'(\m_1, \dots, \m_t) \to C'(\widetilde{\varphi})$.
\end{proof}

\end{document}